\documentclass[journal,twoside,web]{ieeecolor_nologo}
\usepackage{cite}
\usepackage{amsmath,amssymb,amsfonts}
\usepackage{algorithmic}
\usepackage{graphicx}
\usepackage{graphics}
\usepackage{tcolorbox}
\usepackage{url}
\usepackage{color}
\usepackage{hyperref}
\usepackage{textcomp}
\usepackage{mathtools, cuted}
\usepackage{lipsum}
\usepackage{stfloats}
\usepackage{multirow}
\usepackage{wrapfig}
\usepackage{caption}
\usepackage{subcaption}

\definecolor{subsectioncolor}{RGB}{0,0,128} % Dark blue

\usepackage{amsmath,amssymb,euscript,psfrag,latexsym,graphicx}
\usepackage{bbm,color,amstext,wasysym,cuted,mathtools, cite}

\usepackage[normalem]{ulem}
\graphicspath{{./},{./figures/}}
\usepackage{dsfont}
\usepackage{ulem}

\usepackage{algorithm}
\usepackage{algorithmic}
\usepackage[algo2e,linesnumbered,ruled]{algorithm2e}

	\usepackage[utf8]{inputenc}

\usepackage{mathtools,halloweenmath}

\newtheorem{theorem}{Theorem}
\newtheorem{definition}{Definition}

\newtheorem{assumption}{Assumption}
\newtheorem{remark}[theorem]{Remark}

\newtheorem{corollary}{Corollary}

\usepackage[dvipsnames]{xcolor}

\newtheorem{proposition}{Proposition}
\providecommand{\qed}{\hfill\QED}
\begin{document}

% \title{Koopman Supereigenfunctions and Positive Systems for Data-driven  Modeling, Control, and Safety}

% 

\title{Data-to-Certificates (D2C): Koopman Supereigenfunctions for Stability, Safety, and Control}

\author{ Umesh Vaidya, {\it IEEE Senior Member}
\thanks{
Financial support from of NSF CMMI 2031573 is greatly acknowledged. The author is with the Department of Mechanical Engineering, Clemson University, Clemson SC, 29631. }}

\maketitle

% \begin{abstract}
% Traditional dynamical system models, including Koopman operator representations, are fundamentally equality-based, whereas many analysis and control tools rely on inequalities. This mismatch motivates representations that are intrinsically aligned with inequality-based reasoning. 
% In this paper, we introduce \emph{supereigenfunctions} of the Koopman operator as an inequality-based generalization of Koopman eigenfunctions. By relaxing the spectral equality to a one-sided bound, supereigenfunctions define exponential growth envelopes that characterize nonlinear dynamics without requiring explicit spectral structure. We establish their theoretical foundations and show that the associated rates recover intrinsic dynamical quantities such as Lyapunov exponents. 
% We develop two complementary constructions: a geometric approach based on the multiplicative ergodic theorem, and a resolvent-based formulation that enables data-driven computation directly from trajectory data. The resulting framework yields a unified, operator-theoretic representation for uncertainty propagation, stability certification, and stabilizing and safety-critical control design. Simulation results illustrate the effectiveness of the proposed approach.
% \end{abstract}

\begin{abstract}
Traditional dynamical system models, including Koopman operator representations, are fundamentally equality-based, whereas many analysis and control tools rely on inequalities. This mismatch motivates representations that are intrinsically aligned with certification tasks involved in the analysis and control synthesis problems. In this paper, we propose a \emph{data-to-certificates (D2C)} paradigm that bypasses explicit model construction and directly learns certificates from data.
We introduce \emph{supereigenfunctions} of the Koopman operator as an inequality-based generalization of eigenfunctions that define exponential growth envelopes encoding stability, safety, and uncertainty propagation, thereby serving as certificates for a range of control objectives. We establish their theoretical foundations and show that the associated rates recover intrinsic dynamical quantities such as Lyapunov exponents. Two complementary constructions are developed: a geometric approach based on the multiplicative ergodic theorem (MET), and a resolvent/Gramian formulation that enables computation directly from trajectory data.
The resulting framework yields certificates that can be used for stability and contraction analysis, as well as stabilizing and safety-critical control synthesis via convex quadratic programming-based optimization  program. Numerical examples demonstrate the effectiveness of the proposed data-driven certification approach for stabilization, contraction, and safe control design.

% Traditional dynamical system models, including Koopman operator representations, are fundamentally equality-based, whereas many analysis and control tools rely on inequalities. This mismatch motivates representations that are intrinsically aligned with inequality-based reasoning.
% In this paper, we propose a shift from model construction to direct certificate synthesis. We introduce \emph{supereigenfunctions} of the Koopman operator as an inequality-based generalization of eigenfunctions, which define exponential growth envelopes that directly encode stability, safety, and uncertainty propagation properties.
% We establish their theoretical foundations and show that the associated rates recover intrinsic dynamical quantities such as Lyapunov exponents. Two complementary constructions are developed: a geometric approach based on the multiplicative ergodic theorem (MET), and a resolvent-based formulation that enables computation directly from trajectory data.
% The resulting Koopman-to-Certificates (K2C) framework produces certificates that can be directly used for stability and contraction analysis, stabilizing and safety-critical control design, often leading to convex optimization-based controllers. Simulation results demonstrate the effectiveness of the proposed approach.
\end{abstract}

\section{Introduction}

Models of dynamical systems are not ends in themselves but  they are instruments for prediction, analysis, and control synthesis. From this perspective, the structure of a model should be dictated by the tasks it is meant to enable. Classical modeling paradigms including linear system representations and operator-theoretic approaches such as Koopman operator theory are predominantly \emph{equality-based}, describing system evolution through exact relations. In contrast, most of the powerful tools in systems and control, including Lyapunov stability theory, Hamilton--Jacobi (HJ) formulations for optimal control, and safety verification via barrier functions, are fundamentally \emph{inequality-based}. This reveals a structural mismatch between how models are constructed and how they are ultimately used.

Koopman operator theory provides a compelling framework for the analysis and control of nonlinear dynamical systems by lifting them into a linear space of observables \cite{koopman1931hamiltonian,mezic2005spectral,mezic2020spectrum,williams2015data,korda2018convergence,kaiser2017data,korda2020optimal,korda2018linear
,vaidya2025koopman,huang2018data,huang2018feedback}. In this setting, Koopman eigenfunctions define intrinsic coordinates in which the dynamics evolve linearly, enabling prediction and control. However, this framework is built on spectral equalities of the form $\mathcal{K}_f \phi = \lambda \phi$, which encode exact exponential evolution. While elegant, this representation faces well-known limitations: global eigenfunctions may fail to exist, the spectrum may be continuous or ill-defined, and for controlled systems the lifted dynamics become bilinear, complicating synthesis \cite{mezic2020spectrum,vaidya2025koopman,vaidya2026discovering}.

In contrast, synthesis methodologies such as Lyapunov theory, control Lyapunov and barrier functions, and HJ-based approaches are inherently inequality-driven. These methods characterize system behavior through bounds, envelopes, and certificates rather than exact representations. For example, Lyapunov functions certify stability via $\dot V \leq -\alpha(V)$, while HJ value functions encode optimality through viscosity inequalities \cite{Khalil_book,CrandallIshiiLions1992,evans2010partial}. These approaches emphasize \emph{certification} rather than \emph{representation}.

This paper proposes a unifying perspective that bridges these two paradigms. We introduce \emph{supereigenfunctions} of the Koopman operator, defined by relaxing the spectral equality to a one-sided inequality 
\[
\mathcal{K}_f \phi \leq \lambda \phi.
\]
This seemingly simple relaxation fundamentally changes the role of the representation. Instead of encoding exact trajectory evolution, supereigenfunctions define exponential growth envelopes that bound system behavior. As a result, they provide an operator-theoretic representation that is naturally compatible with Lyapunov functions, HJ value functions, and reachability and safety certificates \cite{Sontag_99,sontag1998mathematical,artstein1983stabilization,ames2017control,slotine1998contraction}.

The key conceptual shift of this work is that Koopman-based methods are no longer used to construct predictive models, but to construct \emph{certificates}. In this view, supereigenfunctions act as operator-theoretic analogues of Lyapunov and value functions, enabling stability, safety, and performance guarantees directly within the Koopman framework.

We develop the theory of supereigenfunctions along two complementary directions. First, we introduce \emph{directional supereigenfunctions}, defined on the tangent bundle, which capture anisotropic growth and contraction rates. These objects arise naturally from the multiplicative ergodic theorem (MET), with associated rates corresponding to Lyapunov exponents, thereby providing a geometric and dynamical interpretation of the framework \cite{oseledets1968multiplicative,ruelle1979ergodic}. Second, we develop a \emph{resolvent-based construction} that generates families of supereigenfunctions from user-defined probe functions. This construction admits trajectory-based representations and enables computation directly from data, without explicit knowledge of the system.

A key feature of the proposed framework is that supereigenfunctions are globally defined, nonnegative, and form a convex cone. 
% Unlike classical Koopman eigenfunctions, which may be complex-valued, non-smooth, or only locally defined, supereigenfunctions are well-suited for certification tasks \cite{mezic2020spectrum}. 
Moreover, their inherent non-uniqueness becomes a design degree of freedom: different choices correspond to different objectives, such as stabilization, safety, or uncertainty quantification. This enables a task-driven representation paradigm in which the choice of observables is aligned with the intended analysis or control objective. This is in contrast to the choice of observables or lifting function used for the finite dimensional approximation of the Koopman opertor or its spectrum where the focus in on discovering intrinsic coordinates or subspace for minimal linear representation of nonlinear system. 

Importantly, the proposed constructions admit data-driven implementations. In particular, resolvent-based approach allow supereigenfunctions to be computed directly from trajectory data and variational dynamics. This provides a scalable alternative to PDE-based or sum-of-squares \cite{Parrilothesis,moyalan2021sum,moyalan2023data,lasserre2008nonlinear,vaidya_stochastic_linearoperator} approaches and enables integration with modern learning-based pipelines, including high-fidelity simulators and world models \cite{joglekar2024multimodel_koopman,samak2024digital_twin_koopman,ha2018world}. As a result, the framework supports the direct extraction of provable guarantees from data.

It is important to distinguish the proposed supereigenfunction framework from classical control Lyapunov and barrier functions (CLFs/CBFs) as well as standard Koopman eigenfunctions. CLFs and CBFs are inequality-based certificates typically constructed heuristically or via optimization (e.g., sum-of-squares), and are often problem-specific. In contrast, supereigenfunctions arise directly from the Koopman operator, providing certificates that reflect the intrinsic system dynamics and can be computed from trajectory data.
While the construction differs fundamentally, the resulting control synthesis has a similar structure: both approaches lead to pointwise convex QPs enforcing inequality constraints. However, in the proposed framework, these constraints are derived from operator-theoretic constructions rather than imposed on candidate functions, yielding a data-driven and dynamically consistent alternative to classical certificate design.

The contributions of this paper are as follows. We introduce supereigenfunctions as an inequality-based generalization of Koopman eigenfunctions and establish their role as operator-theoretic certificates for nonlinear dynamical systems. We develop directional supereigenfunctions on the tangent bundle and show that they recover and generalize contraction-based stability and incremental analysis. We propose resolvent and trajectory-based constructions that enable computation of supereigenfunctions directly from data. We establish connections between the proposed framework and Lyapunov theory, contraction theory, and safety certification, providing a unified operator-theoretic perspective.  We show that supereigenfunction-based certificates lead to convex control synthesis via quadratic programs (QPs), unifying stabilization, safety, and contraction within a single optimization framework. We demonstrate the effectiveness of the approach through applications to uncertainty propagation, stabilizing control, safety filtering, and synchronization. Overall, this work establishes a new paradigm in which Koopman theory is used not for modeling system trajectories, but for constructing certificates that enable analysis and control of nonlinear systems.

%%%%%%%%%%%%%%%%%%%%%%%%%%%%%%%%%%%%%%%%%
\section{Preliminaries and Notations}
\label{sec:prelim}
%%%%%%%%%%%%%%%%%%%%%%%%%%%%%%%%%%%%%%%%%%%%%%%%%%%%%%%%%%%%%%%%%%%%%%%%%%%%%%
\subsection{Notations} Let \(\Omega\subset\mathbb R^n\) denote the state space and \(s_t:\Omega\to\Omega\) the flow generated by the dynamical system $
\dot x=f(x).
$
For vectors \(x,y\in\mathbb R^m\), the notation
$
x\preceq y
$
denotes componentwise ordering,
$
x_i\le y_i,
\; i=1,\ldots,m.
$
For symmetric matrices \(A,B\in\mathbb S^n\),
$
A\preceq B
$
denotes the Loewner partial order,
$
B-A\succeq0.
$
The usual scalar ordering is denoted by \(\le\).
The cone of nonnegative vectors is denoted by
$
\mathbb R^m_+
:=\{x\in\mathbb R^m:\ x_i\ge0,\ i=1,\ldots,m\}.
$
The sets of symmetric positive semidefinite and positive definite matrices are denoted by
$
\mathbb S_+^n$ and $
\mathbb S_{++}^n,
$
respectively.

\subsection{Koopman Operator}
This section reviews the Koopman operator framework for nonlinear autonomous dynamical systems and recalls Koopman eigenfunctions. Let $\Omega \subseteq \mathbb{R}^n$ be an open set and consider the system
\begin{equation}
\dot x = f(x), \qquad f \in C^1(\Omega,\mathbb{R}^n).
\label{eq:system}
\end{equation}
We assume forward completeness, i.e., for every $x \in \Omega$ the solution exists for all $t \ge 0$ and remains in $\Omega$. Let $s_t:\Omega \to \Omega$ denote the associated flow. Koopman theory studies the evolution of \emph{observables} $\phi:\Omega \to \mathbb{R}$ rather than states \cite{lasota1994chaos}. Let $\mathcal F$ be a function space such as $C^1(\Omega)$ or $L^2(\Omega,\mu)$. The Koopman operator $U_t:\mathcal F \to \mathcal F$ is defined 
\begin{equation}
(U_t \phi)(x) := \phi(s_t(x)).
\label{eq:koopman}
\end{equation}
The family $\{U_t\}_{t \ge 0}$ forms a strongly continuous semigroup with $U_{t+\tau} = U_t U_\tau$ and $U_0 = I$. Each $U_t$ is linear even when \eqref{eq:system} is nonlinear. The infinitesimal generator of $\{U_t\}$ is
\begin{equation}
\mathcal K_f \phi := \lim_{t \downarrow 0} \frac{U_t \phi - \phi}{t},
\label{eq:generator_def}
\end{equation}
whenever the limit exists. For $\phi \in C^1(\Omega)$,
\begin{equation}
(\mathcal K_f \phi)(x) := \nabla \phi(x) \cdot f(x),
\label{eq:generator_formula}
\end{equation}
i.e., $\mathcal K_f$ coincides with differentiation along trajectories.

\begin{definition}[Koopman eigenfunction]
A nonzero observable $\phi \in \mathcal F$ is a Koopman eigenfunction with eigenvalue $\lambda \in \mathbb C$ if
\begin{equation}
\mathcal K_f \phi = \lambda \phi.
\label{eq:eigen_def}
\end{equation}
Equivalently,
\begin{equation}
\phi(s_t(x)) = e^{\lambda t} \phi(x), \qquad \forall t \ge 0.
\label{eq:eigen_semigroup}
\end{equation}
\end{definition}

If $\{\phi_i\}_{i=1}^m$ are eigenfunctions with eigenvalues $\lambda_i$, then the lifted coordinates $z_i = \phi_i(x)$ satisfy $\dot z_i = \lambda_i z_i$, yielding linear, decoupled dynamics. For linear systems $\dot x = Ax$, if $w^\top A = \lambda w^\top$, then $\phi(x) = w^\top x$ is a Koopman eigenfunction with eigenvalue $\lambda$. For general nonlinear systems, smooth global eigenfunctions may fail to exist or may only be defined locally \cite{mezic2020spectrum}—for example, in the presence of multiple attractors or complex recurrent dynamics. Moreover, systems with continuous or ill-conditioned Koopman spectra pose significant challenges for spectral approximation.
.The equality \eqref{eq:eigen_def} can be fragile under perturbations and difficult to enforce from data. These limitations motivate inequality-based objects that preserve growth information while relaxing spectral equality.
%%%%%%%%%%%%%%%%%%%%%%%%%%%%%%%%%%%%%%%%%%%%%%%%%%%%%%%%%%%%%%%%%%%%%%%%%%%%%%

\section{Supereigenfunctions of the Koopman Operator}
\label{sec:supereigen}
%%%%%%%%%%%%%%%%%%%%%%%%%%%%%%%%%%%%%%%%%%%%%%%%%%%%%%%%%%%%%%%%%%%%%%%%%%%%%%

% Classical Koopman analysis is based on the spectral equality
% $\mathcal K_f \phi = \lambda \phi$, which characterizes exact exponential behavior.
% For many nonlinear systems, however, smooth global eigenfunctions may fail to exist—for example, in the presence of multiple attractors or complex recurrent dynamics. Moreover, systems with continuous or ill-conditioned Koopman spectra pose significant challenges for spectral approximation.

% These limitations motivate an inequality-based formulation that yields certified growth envelopes. 
From the perspective of positive operator theory, the Koopman semigroup preserves pointwise order, and supereigenfunctions arise naturally as cone-ordered subeigenfunctions of  positive operator \cite{krein1948linear,nussbaum1986collatz}.

% A useful way to view this relaxation is through the lens of 
% \emph{positive operator theory} \cite{meyer1991banach}. 
% Indeed, on an ordered function space the Koopman semigroup preserves 
% pointwise order, and supereigenfunctions are precisely \emph{cone-ordered 
% subeigenfunctions} of a positive operator in the sense of 
% Perron--Frobenius theory 
% \cite{krein1948linear,nussbaum1986collatz}.
%=============================================================================
\subsection{Ordered spaces and positivity}
%=============================================================================

Let $\mathcal F$ be a real Banach space of observables on $\Omega$ (e.g.,
$C(\Omega)$, $C^1(\Omega)$, or $L^p(\Omega,\mu)$), equipped with the positive cone
\[
\mathcal F_+ := \{\psi \in \mathcal F : \psi(x) \ge 0 \ \text{(a.e.) in } \Omega\}.
\]
This cone induces the partial order $\psi_1 \leq \psi_2$ iff $\psi_1 - \psi_2 \in \mathcal F_+$.
An operator $T:\mathcal F \to \mathcal F$ is \emph{positive} if $\phi \in \mathcal F_+$ implies $T\phi \in \mathcal F_+$. The Koopman semigroup $(U_t)_{t\ge 0}$ forms a positive semigroup on 
$\mathcal F$, preserving the cone $\mathcal F_+$ \cite{engel2000one,lasota1994chaos}. Indeed as  $U_t:\mathcal F\to\mathcal F$ defined by $
(U_t\psi)(x):=\psi(s_t(x))$
positivity follows:
\begin{equation}
\psi\in\mathcal F_+ \ \Longrightarrow\ U_t\psi\in\mathcal F_+,
\qquad \forall t\ge 0.
\label{eq:koopman_positive}
\end{equation}
In this ordered setting, one may study \emph{subeigenfunctions} (also called
cone subeigenvectors).
% defined by inequalities of the form $T\psi\preceq \rho\psi$,
% which motivates the following definition.

% For the Koopman semigroup $U_t$ defined by $(U_t \phi)(x) = \phi(s_t(x))$, positivity is immediate:
% \begin{equation}
% \phi \in \mathcal F_+ \ \Longrightarrow\ U_t \phi \in \mathcal F_+, \qquad \forall t \ge 0.
% \label{eq:koopman_positive}
% \end{equation}
% This ordered structure motivates the  subeigenfunctions. 

%=============================================================================
\subsection{Supereigenfunctions and envelope rates}
%=============================================================================

\begin{definition}[Supereigenfunction]
\label{def:supereigen}
A function $\varphi:\Omega \to \mathbb R_{\ge 0}$ is a supereigenfunction of $\mathcal K_f$ if there exists $\lambda \in \mathbb R$ such that
\begin{equation}
(\mathcal K_f \varphi)(x) \le \lambda \varphi(x), \qquad \forall x \in \Omega.
\label{eq:supereigen}
\end{equation}
Any such $\lambda$ is called an admissible supereigenvalue.
\end{definition}
Along the trajectories of the system $s_t(x)$, we have
\[
\frac{d}{dt}\varphi(x(t)) \le \lambda \varphi(x(t)),
\]
so $\varphi$ is bounded by an exponential envelope. Unlike eigenfunctions, which may be complex-valued and sign-changing, supereigenfunctions are nonnegative and encode growth bounds. The focus of this paper is on the supereigenfunctions of the Koopman operator as defined above, however we define the subeigenfunctions as below for completeness. 

\begin{definition}[Subeigenfunction]
A nonnegative observable $\varphi:\Omega\to\mathbb R_{\ge 0}$ is called a
subeigenfunction of $\mathcal K_f$ with rate $\lambda\in\mathbb R$ if
\[
\mathcal K_f\varphi \ge \lambda \varphi.
\]
\end{definition}

\begin{proposition}[Exponential envelope]
\label{prop:exp_envelope}
If $\varphi$ satisfies \eqref{eq:supereigen}, then
\begin{equation}
\varphi(s_t(x)) \le e^{\lambda t}\varphi(x),
\qquad \forall t \ge 0,\ \forall x \in \Omega,
\label{eq:envelope}
\end{equation}
equivalently $U_t \varphi \leq e^{\lambda t} \varphi$.
\end{proposition}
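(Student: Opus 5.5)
The plan is a Grönwall-type comparison along a fixed trajectory. Fix $x\in\Omega$ and set $g(t):=\varphi(s_t(x))$ for $t\ge 0$; forward completeness guarantees that $s_t(x)\in\Omega$ for all $t\ge0$, so $g$ is well defined on $[0,\infty)$. Assuming $\varphi\in C^1(\Omega)$, as implicit in the generator formula \eqref{eq:generator_formula}, the map $t\mapsto s_t(x)$ is $C^1$ and the chain rule gives
\[
g'(t)=\nabla\varphi(s_t(x))\cdot f(s_t(x))=(\mathcal K_f\varphi)(s_t(x)).
\]
Since \eqref{eq:supereigen} holds at every point of $\Omega$, and in particular at $s_t(x)$, we obtain the differential inequality $g'(t)\le\lambda g(t)$ for all $t\ge0$.

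Next I integrate this inequality with the integrating factor $h(t):=e^{-\lambda t}g(t)$. Then
\[
h'(t)=e^{-\lambda t}\bigl(g'(t)-\lambda g(t)\bigr)\le 0,
\]
so $h$ is nonincreasing on $[0,\infty)$ and $h(t)\le h(0)=\varphi(x)$. Multiplying by $e^{\lambda t}>0$ gives $\varphi(s_t(x))\le e^{\lambda t}\varphi(x)$, which is \eqref{eq:envelope}. The operator form follows directly from the definition \eqref{eq:koopman}: $(U_t\varphi)(x)=\varphi(s_t(x))$, so the pointwise bound for every $x$ is exactly $e^{\lambda t}\varphi-U_t\varphi\in\mathcal F_+$. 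Conversely, if $U_t\varphi\le e^{\lambda t}\varphi$ for all $t$, then subtracting $\varphi$, dividing by $t$ and letting $t\downarrow0$ in \eqref{eq:generator_def} recovers $\mathcal K_f\varphi\le\lambda\varphi$; this justifies the word ``equivalently'' in the statement.

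The main obstacle is regularity. If $\varphi$ is only continuous, or is defined almost everywhere with $\mathcal K_f$ understood as the semigroup generator, then the chain rule is not available. In that case I would argue instead with the semigroup property. Write $U_t\varphi-\varphi=\int_0^t U_\tau\mathcal K_f\varphi\,d\tau$ and use positivity \eqref{eq:koopman_positive} to get
\[
U_\tau\mathcal K_f\varphi\le\lambda U_\tau\varphi .
\]
Then $w(t):=e^{-\lambda t}U_t\varphi$ satisfies $w(t)-\varphi\le 0$ in the order of $\mathcal F$. Equivalently, one can apply a Dini-derivative comparison lemma to $g$ and conclude that the upper right Dini derivative of $h$ is nonpositive, which again forces $h$ to be nonincreasing. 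For the main proof I would treat the $C^1$ case, and note this positivity-based route in a remark as the extension to less regular $\varphi$.
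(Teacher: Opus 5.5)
Your proposal is correct and is the same Gr\"onwall argument the paper uses; the paper's proof is the single line ``apply Gr\"onwall's inequality,'' and your integrating factor $e^{-\lambda t}\varphi(s_t(x))$ writes that argument out along the trajectory. The converse you derive from the generator limit is valid but not needed, because ``equivalently'' in the statement only restates the pointwise bound in the operator notation $(U_t\varphi)(x)=\varphi(s_t(x))$.
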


\begin{proof}
Apply Gr\"onwall's inequality.\qed
\end{proof}

The following connects equality-based and inequality-based representations.

\begin{proposition}[From eigenfunctions to envelopes]
\label{prop:eig_to_supereig}
If $\phi$ satisfies $\mathcal K_f \phi = \lambda \phi$ for some $\lambda \in \mathbb C$, then $\psi := |\phi|^2$ satisfies
\[
\mathcal K_f \psi = 2\,\mathrm{Re}(\lambda)\,\psi.
\]
Thus $\psi$ is both a supereigenfunction and a subeigenfunction with rate $2\,\mathrm{Re}(\lambda)$.
\end{proposition}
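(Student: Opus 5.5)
We will compute $\mathcal K_f\psi$ directly by writing $\psi = \phi\bar\phi$ and using that the generator $\mathcal K_f$ acts as a derivation along trajectories. Because $f$ is real-valued, $\mathcal K_f$ commutes with complex conjugation. Indeed, for $\phi\in C^1(\Omega;\mathbb C)$ formula \eqref{eq:generator_formula} gives $\mathcal K_f\bar\phi = \nabla\bar\phi\cdot f = \overline{\nabla\phi\cdot f} = \overline{\mathcal K_f\phi} = \bar\lambda\,\bar\phi$. So $\bar\phi$ is itself an eigenfunction, with eigenvalue $\bar\lambda$.

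Next we apply the Leibniz rule for $\mathcal K_f$, which holds since $\mathcal K_f$ is the directional derivative along $f$:
\[
\mathcal K_f(\phi\bar\phi) = (\mathcal K_f\phi)\,\bar\phi + \phi\,(\mathcal K_f\bar\phi) = \lambda|\phi|^2 + \bar\lambda|\phi|^2 = 2\,\mathrm{Re}(\lambda)\,\psi .
\]
Since $\psi\ge 0$ and the identity is an equality, both inequalities $\mathcal K_f\psi\le 2\mathrm{Re}(\lambda)\psi$ and $\mathcal K_f\psi\ge 2\mathrm{Re}(\lambda)\psi$ hold. Hence $\psi$ meets Definition~\ref{def:supereigen} and the subeigenfunction definition, each with rate $2\,\mathrm{Re}(\lambda)$.

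The only delicate point is regularity. If $\phi$ is merely an eigenfunction in the semigroup sense \eqref{eq:eigen_semigroup}, without being $C^1$, we instead argue at the level of $U_t$:
\[
\psi(s_t(x)) = |e^{\lambda t}\phi(x)|^2 = e^{2\mathrm{Re}(\lambda)t}\psi(x).
\]
Then $U_t\psi = e^{2\mathrm{Re}(\lambda)t}\psi$, and differentiating at $t=0$ via \eqref{eq:generator_def} gives the same conclusion. This route avoids the product rule entirely and is consistent with the envelope in Proposition~\ref{prop:exp_envelope}. We expect no real obstacle beyond stating which of these two settings is being used.
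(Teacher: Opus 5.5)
Your proof is correct and follows the paper's argument essentially verbatim: write $\psi=\phi\overline{\phi}$, use that $\mathcal K_f$ commutes with conjugation because $f$ is real, apply the Leibniz rule to get $(\lambda+\overline{\lambda})|\phi|^2=2\,\mathrm{Re}(\lambda)\psi$, and note that since $\psi\ge 0$ this equality yields both one-sided inequalities. Your extra semigroup-level argument via $\psi(s_t(x))=e^{2\mathrm{Re}(\lambda)t}\psi(x)$ is valid, is not in the paper, and usefully covers the case where $\phi$ is not $C^1$.
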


\begin{proof}
Since $
\psi=|\phi|^2=\phi\,\overline{\phi},
$
the product rule for the Koopman generator gives
\[
\mathcal K_f\psi
=
(\mathcal K_f\phi)\,\overline{\phi}
+
\phi\,\mathcal K_f\overline{\phi}.
\]
Because the vector field $f$ is real-valued, $
\mathcal K_f\overline{\phi}
=
\overline{\mathcal K_f\phi}.
$
Using the eigenfunction relation
$
\mathcal K_f\phi=\lambda\phi,
$
we therefore obtain
\[
\begin{aligned}
\mathcal K_f|\phi|^2
&=
\lambda\phi\overline{\phi}
+
\phi\,\overline{\lambda\phi}=
\lambda|\phi|^2+\overline{\lambda}|\phi|^2\\
&=
(\lambda+\overline{\lambda})|\phi|^2=
2\,\mathrm{Re}(\lambda)|\phi|^2.
\end{aligned}
\]
Hence, with $\psi=|\phi|^2$,
\[
\mathcal K_f\psi
=
2\,\mathrm{Re}(\lambda)\psi.
\]
Since $\psi\ge0$, equality implies both
\[
\mathcal K_f\psi
\le
2\,\mathrm{Re}(\lambda)\psi
\;\;{\rm 
and}
\;\;
\mathcal K_f\psi
\ge
2\,\mathrm{Re}(\lambda)\psi.
\]
Thus $\psi$ is simultaneously a supereigenfunction and a
subeigenfunction with rate $2\,\mathrm{Re}(\lambda)$.
\end{proof}

%=============================================================================
\subsection{Minimal rates and Collatz--Wielandt interpretation}
%=============================================================================

Supereigenvalues are not unique: if \eqref{eq:supereigen} holds for $\lambda$, it holds for any $\lambda' \ge \lambda$. This motivates an intrinsic rate.

\begin{definition}[Minimal supereigenvalue]
\label{def:min_rate}
For $\varphi \ge 0$, define
\begin{equation}
\lambda_\varphi := \inf\{\lambda \in \mathbb R : \mathcal K_f \varphi \le \lambda \varphi\}.
\label{eq:min_super}
\end{equation}
\end{definition}

If $\varphi \in C^1(\Omega)$ and $\varphi(x) > 0$, then
\begin{equation}
\lambda_\varphi = \sup_{x \in \Omega} \frac{(\mathcal K_f \varphi)(x)}{\varphi(x)}.
\label{eq:lambda_formula}
\end{equation}
This is a continuous-time analogue of the Collatz--Wielandt bound 
from Perron--Frobenius theory \cite{berman1994nonnegative,seneta2006nonnegative}, 
extended to positive operators and their generators 
\cite{nussbaum1986collatz,schaefer1974banach,engel2000one}. 
It provides the tightest exponential envelope certified by $\varphi$. 

%=============================================================================
\subsection{Cone properties and existence}
%=============================================================================

Supereigenfunctions form a convex cone i.e., \emph{Scaling:} $c\varphi$ is a supereigenfunction for $c>0$, \emph{Addition:} $\varphi_1 + \varphi_2$ has rate $\max\{\lambda_1,\lambda_2\}$. They also exist under mild conditions.

\begin{theorem}[Global supereigenfunction]
\label{thm:existence_basic}
Suppose
\[
\|f(x)\|\le L(1+\|x\|),
\qquad x\in\Omega.
\]
Then
\[
\varphi(x)=1+\|x\|^2
\]
is a global supereigenfunction satisfying
\[
\mathcal K_f\varphi(x)\le 3L\,\varphi(x),
\qquad x\in\Omega.
\]
\end{theorem}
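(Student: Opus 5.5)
Since $\varphi(x)=1+\|x\|^2$ is $C^1$ on $\Omega$, we can use the generator formula \eqref{eq:generator_formula} directly. This gives
\[
\mathcal K_f\varphi(x)=\nabla\varphi(x)\cdot f(x)=2\,x^\top f(x).
\]
The plan is to bound this pointwise by a multiple of $\varphi(x)$ using only the linear growth hypothesis. We also note that $\varphi\ge 1>0$, so $\varphi$ is a legitimate nonnegative observable in the sense of Definition~\ref{def:supereigen}.

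\textbf{Step 1 (Cauchy--Schwarz).} Applying Cauchy--Schwarz and then the growth hypothesis gives
\[
2x^\top f(x)\le 2\|x\|\,\|f(x)\|\le 2L\|x\|(1+\|x\|)=2L\|x\|+2L\|x\|^2 .
\]

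\textbf{Step 2 (absorb the linear term).} We absorb the linear term with Young's inequality $2\|x\|\le 1+\|x\|^2$. This yields
\[
2L\|x\|+2L\|x\|^2\le L(1+\|x\|^2)+2L\|x\|^2\le L\varphi(x)+2L\varphi(x)=3L\,\varphi(x).
\]
The last inequality uses $\|x\|^2\le 1+\|x\|^2$ and $L\ge 0$. Hence $\mathcal K_f\varphi\le 3L\varphi$ on $\Omega$, which is the admissible supereigenvalue claimed in the theorem. Proposition~\ref{prop:exp_envelope} then supplies the corresponding global envelope $\varphi(s_t(x))\le e^{3Lt}\varphi(x)$ if desired.

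There is no genuine obstacle here. The only point to watch is the bookkeeping of constants in Step~2: one must split the cross term so that the total constant comes out as $3L$ rather than something like $2L+L/\varepsilon$. The choice $2\|x\|\le 1+\|x\|^2$ achieves exactly that. The bound is not sharp (one could get $(1+\sqrt2)L$ by optimizing), but the stated constant is all that is required.
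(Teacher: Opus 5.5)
Your proof is correct and matches the paper's argument step for step: compute $\mathcal K_f\varphi=2x^\top f(x)$, apply Cauchy--Schwarz and the linear growth bound, then absorb the linear term via $2\|x\|\le 1+\|x\|^2$ to reach $3L\varphi$. Your side remarks are also correct: the last step needs $L\ge 0$, and maximizing $2r(1+r)/(1+r^2)$ over $r\ge 0$ would sharpen the constant to $(1+\sqrt2)L$.
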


\begin{proof}
Since $\nabla\varphi(x)=2x$,
\[
\mathcal K_f\varphi(x)
=
2x^\top f(x)
\le
2\|x\|\,\|f(x)\|
\le
2L\|x\|(1+\|x\|).
\]
Using $2\|x\|\le1+\|x\|^2$,
\[
\begin{aligned}
\mathcal K_f\varphi(x)
&\le
2L\|x\|+2L\|x\|^2\\
&\le
L(1+\|x\|^2)+2L\|x\|^2\\
&\le
3L(1+\|x\|^2)
=
3L\varphi(x).
\end{aligned}
\]
\end{proof}

\subsection{Examples: eigenfunctions versus supereigenfunctions}
\label{sec:examples}
%%%%%%%%%%%%%%%%%%%%%%%%%%%%%%%%%%%%%%%%%%%%%%%%%%%%%%%%%%%%%%%%%%%%%%%%%%%%%%

The following examples highlight the distinction between Koopman eigenfunctions,
which encode exact spectral evolution, and supereigenfunctions, which provide
nonnegative growth envelopes.

%=====================================================================
\subsubsection{Linear systems}
\label{subsec:ex_linear}
%=====================================================================

Consider the linear system \(\dot x = Ax\) on \(\mathbb R^n\).
If \(w_i^\top A = \mu_i w_i^\top\), then
$
\phi_i(x) = w_i^\top x$
is a Koopman eigenfunction satisfying $
\mathcal K_f \phi_i = \mu_i \phi_i$.
These eigenfunctions may be complex-valued. A nonnegative quadratic observable
\[
\psi_i(x) := |\phi_i(x)|^2,
\]
gives
$
\mathcal K_f \psi_i = 2\,\mathrm{Re}(\mu_i)\,\psi_i,
\;\;
\psi_i(s_t(x)) = e^{2\mathrm{Re}(\mu_i)t}\psi_i(x).
$
Thus \(\psi_i\) is a supereigenfunction with rate \(2\,\mathrm{Re}(\mu_i)\).
This shows that in the linear setting supereigenfunctions recover the 
quadratic growth envelopes associated with spectral modes.

%===========================================================
\subsubsection{Triangular nonlinear system: resonance and loss of smooth eigenfunctions}
\label{sec:triangular_example}
%===========================================================

Consider
\begin{equation}
\dot x_1 = \mu x_1,
\qquad
\dot x_2 = \beta x_2 + x_1^2,
\qquad (\mu,\beta\in\mathbb R).
\label{eq:triangular_system}
\end{equation}
The observable \(\phi_1(x)=x_1\) is always a Koopman eigenfunction with eigenvalue \(\mu\).
To construct an eigenfunction associated with \(\beta\), use the ansatz
\[
\phi_2(x)=x_2+h(x_1),
\]
which leads to
$
\mu x_1 h'(x_1)-\beta h(x_1)+x_1^2=0.
$
If \(\beta\neq 2\mu\), this admits the smooth solution
\[
\phi_2(x)=x_2-\frac{x_1^2}{2\mu-\beta},
\]
so \(\phi_2\) is a Koopman eigenfunction with eigenvalue \(\beta\).
In the resonant case \(\beta=2\mu\), however, the solution involves logarithmic terms and is not smooth at the origin; hence no smooth eigenfunction of this form exists. Despite this failure of spectral equality, one can still construct a global supereigenfunction. For example, for any \(k>0\),
\begin{equation}
\varphi(x):=x_1^2+x_2^2+kx_1^4
\label{eq:supereig_resonant}
\end{equation}
satisfies $
\mathcal K_f \varphi \le \lambda \varphi$
where $\lambda
=
\max\left\{0,
2\mu,\;
4\mu+\frac{1}{\sqrt{k}}
\right\}
$. Thus even when smooth eigenfunctions break down under resonance, a  supereigenfunction remains.

%=============================================================================
\subsubsection{Local eigenfunctions versus global supereigenfunctions}
\label{subsec:ex_bistable}
%=============================================================================

Consider the scalar nonlinear system
\begin{equation}
\dot x = -x + x^3 = -x(1-x^2),
\qquad x\in\mathbb R.
\label{eq:bistable}
\end{equation}
The origin is locally asymptotically stable with domain of attraction \(|x|<1\), while \(x=\pm1\) are unstable equilibria. For this system, one obtains the Koopman eigenfunction
\begin{equation}
\phi(x)=\frac{x}{\sqrt{1-x^2}},
\label{eq:local_eig}
\end{equation}
which satisfies
$
\mathcal K_f \phi = -\phi.
$
However, $\phi$ is well defined only for \(|x|<1\) and diverges as \(x\to \pm1\). 
Thus, the eigenfunction is confined to the basin of attraction of the origin and cannot be extended globally. In contrast, consider the globally defined nonnegative observable
\[
\varphi(x)=e^{-x^2}.
\]
A direct computation gives
\[
\mathcal K_f \varphi
=
\nabla \varphi(x) f(x)
=
(-2x e^{-x^2})(-x+x^3)
=
(2x^2-2x^4)\varphi(x).
\]
Since $
\sup_{x\in\mathbb R} (2x^2-2x^4) = \tfrac12,
$
it follows that
\[
\mathcal K_f \varphi \le \tfrac12\,\varphi(x),
\]
so $\varphi$ is a global supereigenfunction with rate $\lambda=\tfrac12$. This example illustrates a key advantage of the inequality-based formulation:
Koopman eigenfunctions may be only locally defined, whereas supereigenfunctions 
can be constructed globally, valid beyond 
the basin of attraction.
%=====================================================================
\subsubsection{Mechanical energy as a global supereigenfunction}
\label{subsec:ex_mechanical_energy}
%=====================================================================

Consider a mechanical system with generalized coordinates \(q\in\mathbb R^n\) and velocities \(v=\dot q\in\mathbb R^n\), governed by
\begin{equation}
M(q)\dot v + C(q,v)v + \nabla V(q) + D(q,v)v = 0,
\label{eq:mech_msd}
\end{equation}
together with \(\dot q = v\). Here \(M(q)\in\mathbb S_{++}^n\) is the inertia matrix, \(V(q)\) is the potential energy, and \(D(q,v)\in\mathbb S_+^n\) is a damping matrix. Assume the standard identity $\dot M(q)-2C(q,v)\ \text{is skew-symmetric}.$
Define the total energy
\begin{equation}
E(q,v)=\frac12 v^\top M(q)v + V(q).
\label{eq:total_energy}
\end{equation}
Then,
\begin{equation}
\frac{d}{dt}E(q(t),v(t))
=
-\,v(t)^\top D(q(t),v(t))v(t)
\le 0,
\label{eq:energy_dissipation}
\end{equation}
and therefore
\begin{equation}
\mathcal K_f E(x)\le 0.
\label{eq:energy_supereig}
\end{equation}
Hence \(E\) is a global supereigenfunction with admissible rate \(\lambda=0\). This example is representative of the certificate viewpoint developed in the paper: the total energy is not generally a Koopman eigenfunction, but it is a physically meaningful, globally defined inequality certificate directly aligned with stability and control analysis.
%%%%%%%%%%%%%%%%%%%%%%%%%%%%%%%%%%%%%%%%%%%%%%%%%%%%%%%%%%%%%%%%%%%%%%%%%%%%%%
\section{Directional Supereigenfunctions}
\label{sec:directional}
%%%%%%%%%%%%%%%%%%%%%%%%%%%%%%%%%%%%%%%%%%%%%%%%%%%%%%%%%%%%%%%%%%%%%%%%%%%%%%

Scalar supereigenfunctions provide global envelopes of the form
$
\mathcal K_f \varphi \le \lambda \varphi,
$
but a single rate may be conservative when the dynamics exhibit anisotropic growth or contraction. This motivates \emph{directional} supereigenfunctions, i.e., families
\[
\mathcal K_f \varphi_i \le \lambda_i \varphi_i, \qquad i=1,\dots,m,
\]
with each component capturing a distinct direction and rate. We present two complementary constructions: a geometric one based on tangent dynamics and the multiplicative ergodic theorem (MET), and one based on resolvent equations.

%=============================================================================
\subsection{Directional envelopes from tangent dynamics}
\label{subsec:met_directional}
%=============================================================================

Directional supereigenfunctions arise naturally from tangent dynamics, where 
anisotropic growth of perturbations induces multiple rates. We formalize this 
through metric-based constructions and their connection to the  MET.

%-----------------------------------------------------------------------------
\subsubsection{Tangent dynamics and metric characterization}
%-----------------------------------------------------------------------------

Let $f\in C^1(\Omega,\mathbb R^n)$ generate the flow $s_t$. The variational system
\begin{equation}
\dot v = Df(x(t))\,v,
\qquad v(t)=Y(t,x)v(0),
\label{eq:variational}
\end{equation}
defines the tangent cocycle $Y(t,x):=Ds_t(x)$. 
For observables $\varphi:\Omega\times\mathbb R^n\to\mathbb R$ defined on the tangent bundle, the extended dynamics
\begin{align}
\dot x=f(x),\qquad \dot v = Df(x)v \label{tangent_dynamics}
\end{align}
induces the tangent-space Koopman operator and generator 
\begin{align}(U^{\tan}_t \varphi)(x,v)&:=\varphi(s_t(x),Y(t,x)v)\nonumber\\
(\mathcal K_f^{\tan}\varphi)(x,v)
&:=
\nabla_x \varphi(x,v)^\top f(x)
+
\nabla_v \varphi(x,v)^\top Df(x)v.
\end{align} 
% For a matrix field $M:\Omega\to\mathbb S_+^n$, define
% \begin{equation}
% \varphi_M(x,v):=v^\top M(x)v.
% \label{eq:phi_metric}
% \end{equation}
% Then
% \begin{equation}
% \frac{d}{dt}\varphi_M
% =
% v^\top\!\big(Df^\top M+MDf+\dot M\big)v,
% \label{eq:metric_derivative}
% \end{equation}
% where $\dot M=(\nabla M)\cdot f$.

\begin{proposition}[Metric characterization]
\label{prop:metric_char}
Let
$
\varphi_M(x,v):=v^\top M(x)v,
\;\; M:\Omega\to\mathbb S_+^n.
$
Then
\[
(\mathcal K_f^{\tan}\varphi_M)(x,v)\le \lambda\,\varphi_M(x,v)
\qquad \forall (x,v)\in\Omega\times\mathbb R^n
\]
if and only if
\[
Df(x)^\top M(x)+M(x)Df(x)+\dot M(x)\preceq \lambda\,M(x),
\;\; \forall x\in\Omega,
\]
\[
\dot M(x):=(\nabla M(x))\cdot f(x)=
\sum_{k=1}^n
\frac{\partial M(x)}{\partial x_k}f_k(x).
\]
\end{proposition}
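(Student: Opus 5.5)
The argument is a direct computation of the tangent-space generator on the quadratic form $\varphi_M$, followed by the standard equivalence between a pointwise quadratic-form inequality in $v$ and a Loewner inequality. Throughout we assume $M\in C^1(\Omega,\mathbb S_+^n)$, which is needed for $\dot M$ to be defined.

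\textbf{Computing the generator.} From the definition of $\mathcal K_f^{\tan}$ we need the two partial gradients of $\varphi_M$. First, $\nabla_v\varphi_M(x,v)=2M(x)v$, using the symmetry of $M(x)$. Second, $\nabla_x\varphi_M(x,v)^\top f(x)=\sum_k v^\top \frac{\partial M}{\partial x_k}(x)v\,f_k(x)=v^\top \dot M(x)v$. Substituting both into the generator formula gives
\[
(\mathcal K_f^{\tan}\varphi_M)(x,v)=v^\top\dot M(x)v+2v^\top M(x)Df(x)v .
\]
Since $2v^\top MDf\,v=v^\top(MDf+Df^\top M)v$, this becomes
\[
(\mathcal K_f^{\tan}\varphi_M)(x,v)=v^\top\big(Df(x)^\top M(x)+M(x)Df(x)+\dot M(x)\big)v .
\]
The same expression arises as $\frac{d}{dt}\varphi_M$ along solutions of \eqref{tangent_dynamics}.

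\textbf{Reducing to a matrix inequality.} Set
\[
Q(x):=\lambda M(x)-Df(x)^\top M(x)-M(x)Df(x)-\dot M(x).
\]
This matrix is symmetric, because each $\partial M/\partial x_k$ is symmetric and so $\dot M$ is symmetric. By the computation above, the supereigenfunction inequality in the statement is equivalent to $v^\top Q(x)v\ge 0$ for all $v\in\mathbb R^n$ and all $x\in\Omega$.

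For a symmetric matrix, nonnegativity of the quadratic form for every $v$ is exactly the definition of $Q(x)\succeq 0$. That in turn is $Df^\top M+MDf+\dot M\preceq\lambda M$ in the Loewner order. Both directions of the equivalence follow at once, pointwise in $x$.

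\textbf{Where care is needed.} There is no real obstacle here. The only subtlety is the symmetrization step: a quadratic form sees only the symmetric part of its matrix, so $MDf$ must be replaced by $\tfrac12(MDf+Df^\top M)$ before the Loewner characterization is applied. The symmetry of $\dot M$ should also be noted explicitly, so that $Q(x)$ is genuinely symmetric and the Loewner order applies to it.
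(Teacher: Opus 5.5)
Your proposal is correct and follows the same route as the paper. Both compute $(\mathcal K_f^{\tan}\varphi_M)(x,v)=v^\top(Df^\top M+MDf+\dot M)v$ along the tangent dynamics, then observe that the resulting quadratic-form inequality holds for all $v$ exactly when the Loewner inequality holds. Your explicit treatment of the symmetrization, the symmetry of $\dot M$, and the $C^1$ assumption on $M$ fills in details the paper leaves implicit.
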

\begin{proof}
The result follows by differentiating
\[
\varphi_M(x,v)=v^\top M(x)v
\]
along the tangent dynamics (\ref{tangent_dynamics})
which gives
\[
(\mathcal K_f^{\tan}\varphi_M)(x,v)
=
v^\top
\big(
Df^\top M+MDf+\dot M
\big)v.
\]
The inequality then holds for all \(v\) if and only if the stated matrix inequality is satisfied.\qed
\end{proof}
Thus, directional supereigenfunctions correspond to matrix differential 
inequalities, closely related to contraction metrics.

%-----------------------------------------------------------------------------
\subsubsection{Asymptotic construction via MET}
%-----------------------------------------------------------------------------

Assume $\mu$ is invariant and $\log^+\|Df\|\in L^1(\mu)$. By the MET, for $\mu$-a.e.\ $x$ there exist Lyapunov exponents
$\chi_1>\cdots>\chi_r$ and an Oseledets splitting
\[
T_x\Omega = E_1(x)\oplus\cdots\oplus E_r(x),\;\; \lim_{t\to\infty}\frac{1}{t}\log\|Y(t,x)v\|=\chi_i.
\]
for $v\in E_i(x)\setminus\{0\}$.
% \[
% \lim_{t\to\infty}\frac{1}{t}\log\|Y(t,x)v\|=\chi_i.
% \]
% By  MET \cite{oseledets1968multiplicative,ruelle1979ergodic}, for $\mu$-a.e.\ $x$,
% \[
% T_x\Omega = \bigoplus_{i=1}^r E_i(x), 
% \qquad 
% \|Y(t,x)v\|\sim e^{\chi_i t},\ \ v\in E_i(x).
% \]
Let $P_i(x)$ denote the orthogonal projector onto $E_i(x)$ and define
\begin{equation}
\varphi_i(x,v):=\|P_i(x)v\|^2.
\label{eq:met_phi_iv}
\end{equation}

\begin{theorem}[Directional supereigenfunctions from MET]
\label{thm:met_supereig_asympt}
For $\mu$-a.e.\ $x$ and $P_i(x)v\neq 0$, for $\varphi_i(x,v)$ as defined in (\ref{eq:met_phi_iv}), we have
\[
\lim_{t\to\infty}\frac{1}{t}\log 
\varphi_i(s_t(x),Y(t,x)v)=2\chi_i.
\]
Equivalently, for any $\varepsilon>0$,
\[
\varphi_i(s_t(x),Y(t,x)v)
\le e^{(2\chi_i+\varepsilon)t}\varphi_i(x,v)
\quad\text{for large }t.
\]
\end{theorem}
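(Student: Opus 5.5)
The plan is to reduce the statement to the defining growth property of the Oseledets subspaces in the MET, using the \emph{equivariance} of the splitting under the tangent cocycle. By the MET, for $\mu$-a.e.\ $x$ the splitting is invariant: $Y(t,x)E_i(x)=E_i(s_t(x))$ for all $t\ge0$. I will work with the projector $P_i(x)$ onto $E_i(x)$ along $\bigoplus_{j\neq i}E_j(x)$. Equivariance of every summand then gives the intertwining relation
\[
P_i(s_t(x))\,Y(t,x)=Y(t,x)\,P_i(x),
\]
checked on each summand $E_j(x)$ separately. Consequently
\[
\varphi_i(s_t(x),Y(t,x)v)=\|Y(t,x)P_i(x)v\|^2 .
\]

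Next, set $w:=P_i(x)v$. By hypothesis $w\in E_i(x)\setminus\{0\}$, so the MET limit stated before the theorem gives $\frac1t\log\|Y(t,x)w\|\to\chi_i$. Multiplying by $2$ yields
\[
\lim_{t\to\infty}\tfrac1t\log\varphi_i(s_t(x),Y(t,x)v)=2\chi_i .
\]

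For the equivalent envelope form, note that $\varphi_i(x,v)=\|w\|^2>0$ is a fixed constant, so $\frac1t\log\varphi_i(x,v)\to0$. Hence
\[
\tfrac1t\log\frac{\varphi_i(s_t(x),Y(t,x)v)}{\varphi_i(x,v)}\to 2\chi_i .
\]
Given $\varepsilon>0$, there is a $T$ such that this quantity is at most $2\chi_i+\varepsilon$ for all $t\ge T$. Exponentiating gives the claimed bound for large $t$. The converse direction holds as well, since the bound for every $\varepsilon$ gives the $\limsup$ inequality, and the $\liminf$ follows analogously from the lower MET estimate.

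The main obstacle is the first step, the intertwining relation. If $P_i(x)$ is read literally as the \emph{orthogonal} projector, $P_i(s_t(x))Y(t,x)v_j$ need not vanish for $v_j\in E_j(x)$, $j\ne i$, because the Oseledets subspaces are generally not orthogonal. A component from a faster subspace ($j<i$) would then dominate and produce the rate $2\chi_1$ rather than $2\chi_i$. I would therefore state explicitly that $P_i$ denotes the Oseledets (oblique) projector associated with the splitting; it coincides with the orthogonal projector when the splitting is orthogonal, for instance for normal or symmetric cocycles. Alternatively one could restrict to $v\in E_i(x)$, in which case both readings agree. With this clarification the rest of the argument is a direct application of the MET.
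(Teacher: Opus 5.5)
Your argument is the paper's proof: invariance of the Oseledets splitting gives $P_i(s_t(x))Y(t,x)=Y(t,x)P_i(x)$, so $\varphi_i(s_t(x),Y(t,x)v)=\|Y(t,x)P_i(x)v\|^2$, and the MET growth rate on $E_i(x)\setminus\{0\}$ finishes it. Your caveat is correct and more careful than the paper, whose proof silently switches to ``the projector onto $E_i(x)$'': with the orthogonal projector both the intertwining and the rate $2\chi_i$ fail (for $\dot x=Ax$ with $A=\begin{bmatrix}1&1\\0&-1\end{bmatrix}$, $\mu=\delta_0$ and $v=(1,0)^\top$, the rate is $2$ instead of $-2$), so the oblique Oseledets projector is what is needed.
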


\begin{proof}
By the multiplicative ergodic theorem (MET) \cite{oseledets1968multiplicative}, for $\mu$-a.e.\ $x$ there exists an invariant splitting
\[
T_x\Omega = E_1(x)\oplus\cdots\oplus E_r(x),\;\;
Y(t,x)E_j(x)=E_j(s_t(x)),
\]
with associated Lyapunov exponents $\chi_j$. Let $P_i(x)$ denote the projector onto $E_i(x)$. Then
\[
P_i(s_t(x))\,Y(t,x)=Y(t,x)\,P_i(x).
\]
Hence, for any $v$ such that $P_i(x)v\neq 0$,
\begin{align}
&\varphi_i\bigl(s_t(x),Y(t,x)v\bigr)
=
\bigl\|P_i(s_t(x))Y(t,x)v\bigr\|^2\nonumber\\
&=
\bigl\|Y(t,x)P_i(x)v\bigr\|^2.
\end{align}
Since $P_i(x)v \in E_i(x)\setminus\{0\}$, the MET gives
\[
\lim_{t\to\infty}\frac{1}{t}\log \bigl\|Y(t,x)P_i(x)v\bigr\|=\chi_i.
\]
Therefore,
\[
\lim_{t\to\infty}\frac{1}{t}\log 
\varphi_i\bigl(s_t(x),Y(t,x)v\bigr)=2\chi_i.
\]
Consequently, for any $\varepsilon>0$, there exists 
$T=T(x,v,\varepsilon)>0$ such that for all $t\ge T$,
\[
(U^{\tan}_t\varphi_i)(x,v)
\le
e^{(2\chi_i+\varepsilon)t}\varphi_i(x,v).
\]
Thus, $\varphi_i$ defines an asymptotic directional supereigenfunction
with rate $2\chi_i$ along typical trajectories.\qed
\end{proof}
% Thus, MET yields intrinsic directional supereigenfunctions with rates $2\chi_i$.
%-----------------------------------------------------------------------------
\subsubsection{Adapted metrics from Oseledets directions}
%-----------------------------------------------------------------------------

For weights $w_i>0$, define
\begin{equation}
M(x):=\sum_{i=1}^r w_i P_i(x),
\;\;
\varphi_M(x,v)=\sum_{i=1}^r w_i\|P_i(x)v\|^2.
\label{eq:adapted_metric}
\end{equation}

\begin{proposition}[Adapted metric supereigenfunction]
\label{prop:adapted_metric}
$\varphi_M$ is an asymptotic supereigenfunction with rate 
$2\chi_{\max}$, where $\chi_{\max}=\max\{\chi_i:\ w_i>0\}$.
\end{proposition}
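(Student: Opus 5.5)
The plan is to reduce the claim to Theorem~\ref{thm:met_supereig_asympt}, applied term by term. I will interpret ``asymptotic supereigenfunction with rate $2\chi_{\max}$'' exactly as in that theorem. Fix $\mu$-a.e.\ $x$ and $v$ with $\varphi_M(x,v)>0$. The goal is then
\[
\limsup_{t\to\infty}\frac1t\log\varphi_M\bigl(s_t(x),Y(t,x)v\bigr)\le 2\chi_{\max}.
\]
Equivalently, for every $\varepsilon>0$ there should be a $T=T(x,v,\varepsilon)$ such that $(U^{\tan}_t\varphi_M)(x,v)\le e^{(2\chi_{\max}+\varepsilon)t}\varphi_M(x,v)$ for all $t\ge T$.

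\textbf{Step 1 (equivariance).} I will use the invariance $P_i(s_t(x))Y(t,x)=Y(t,x)P_i(x)$, exactly as in the proof of Theorem~\ref{thm:met_supereig_asympt}. This gives
\[
(U^{\tan}_t\varphi_M)(x,v)=\sum_{i=1}^r w_i\,\bigl\|Y(t,x)P_i(x)v\bigr\|^2 .
\]
So the pushed-forward observable splits into a finite sum of directional terms.

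\textbf{Step 2 (termwise growth).} Consider the indices with $w_i>0$ and $P_i(x)v\neq0$. For these, Theorem~\ref{thm:met_supereig_asympt} gives $\frac1t\log\|Y(t,x)P_i(x)v\|^2\to 2\chi_i\le 2\chi_{\max}$. Indices with $P_i(x)v=0$ contribute zero. Fix $\varepsilon>0$. For each of the finitely many remaining indices there is a $T_i$ such that $w_i\|Y(t,x)P_i(x)v\|^2\le e^{(2\chi_{\max}+\varepsilon/2)t}$ for $t\ge T_i$. Let $T_0=\max_i T_i$. Summing gives
\[
U^{\tan}_t\varphi_M\le r\,e^{(2\chi_{\max}+\varepsilon/2)t}\quad\text{for } t\ge T_0 .
\]

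\textbf{Step 3 (absorbing constants).} The constant $r/\varphi_M(x,v)$ does not depend on $t$. It is absorbed by the extra factor $e^{\varepsilon t/2}$ once $t\ge T:=\max\{T_0,\tfrac{2}{\varepsilon}\log(r/\varphi_M(x,v))\}$. This yields the envelope with rate $2\chi_{\max}+\varepsilon$.

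As a sanity check, the limit actually equals $2\max\{\chi_i: w_i>0,\ P_i(x)v\ne0\}$. This holds because the log of a finite sum of positive terms is asymptotically governed by the largest exponent. Hence the rate $2\chi_{\max}$ is attained for generic $v$.

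The main subtlety is interpretive rather than computational. The projectors $P_i$ in \eqref{eq:adapted_metric} must be the Oseledets (oblique) projectors along the splitting, so that equivariance holds. The metric $M(x)$ should also be read as $\sum_i w_iP_i^\top P_i$, so that $v^\top M v$ matches $\varphi_M$. If the $P_i$ are genuinely orthogonal projectors onto non-orthogonal subspaces, the commutation identity fails. In that case I would use the following fallback. Write $Y(t,x)v=\sum_j Y(t,x)P_j^{\rm obl}(x)v$, bound $\|P_i(s_t(x))\|$ by a constant, and apply the triangle inequality. This constant may depend on $t$ through the angles between the Oseledets subspaces along the orbit. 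Such angles are known to decay subexponentially (a tempered bound), so they cost only an extra $e^{\varepsilon t}$ factor. With this, the same rate $2\chi_{\max}$ follows, with $\chi_{\max}$ taken as the maximum over the relevant components.
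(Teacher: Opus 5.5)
Your argument is correct. The paper states this proposition without a separate proof, and your termwise reduction to Theorem~\ref{thm:met_supereig_asympt} is the argument its setup implies: equivariance splits $U^{\tan}_t\varphi_M$ into $\sum_i w_i\|Y(t,x)P_i(x)v\|^2$, each term grows at rate $2\chi_i\le 2\chi_{\max}$, and the constant $r/\varphi_M(x,v)$ is absorbed into $e^{\varepsilon t/2}$.

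Your point that the $P_i$ must be the oblique Oseledets projectors (with $M=\sum_i w_iP_i^\top P_i$) for $P_i(s_t(x))Y(t,x)=Y(t,x)P_i(x)$ to hold is a genuine correction to the paper's ``orthogonal projector'' wording. In your fallback the tempered-angle discussion is unnecessary: $\|P_i^{\rm orth}\|=1$ gives $U^{\tan}_t\varphi_M\le(\sum_i w_i)\|Y(t,x)v\|^2$ directly. This yields rate $2\chi_1=2\chi_{\max}$, since all weights in \eqref{eq:adapted_metric} are positive.
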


This provides a systematic construction of contraction-type metrics from 
the MET decomposition.

%-----------------------------------------------------------------------------
\subsubsection{Infinite-horizon Gramian construction}
\label{subsec:gramian_directional}
%=============================================================================

A constructive realization of directional supereigenfunctions can be obtained 
directly from tangent dynamics via an infinite-horizon Gramian.

Let $Q:\Omega\to\mathbb S_+^n$ and $\lambda>0$. Define
\begin{equation}
M_{\lambda}(x)
:=
\int_0^\infty e^{-2\lambda t}\,
Y(t,x)^\top Q\big(s_t(x)\big)\,Y(t,x)\,dt,
\label{eq:gramian_metric}
\end{equation}
whenever the integral converges. The associated observable is
\begin{equation}
\varphi_{\lambda}(x,v)
=
v^\top M_{\lambda}(x)v
=
\int_0^\infty e^{-2\lambda t}
\|Q^{1/2}(s_t(x))Y(t,x)v\|^2 dt.
\label{eq:gramian_phi}
\end{equation}
\begin{proposition}[Family of Gramian-based directional supereigenfunctions]
\label{prop:gramian_family}
Let $Q_i:\Omega\to\mathbb S_+^n$ and $\lambda_i>0$, $i=1,\dots,m$, and define
\begin{equation}
M_i(x)
:=
\int_0^\infty e^{-2\lambda_i t}\,
Y(t,x)^\top Q_i\big(s_t(x)\big)\,Y(t,x)\,dt,
\label{eq:gramian_metric_family}
\end{equation}
whenever the integral converges. For each $i$, define
\begin{equation}
\varphi_i(x,v)
:=
v^\top M_i(x)v
=
\int_0^\infty e^{-2\lambda_i t}
\|Q_i^{1/2}(s_t(x))\,Y(t,x)v\|^2\,dt.
\label{eq:gramian_phi_family}
\end{equation}
Then each $\varphi_i$ satisfies
\begin{equation}
(\mathcal K_f^{\mathrm{tan}}\varphi_i)(x,v)
=
2\lambda_i\,\varphi_i(x,v)-v^\top Q_i(x)v,
\label{eq:gramian_resolvent_family}
\end{equation}
and hence
\begin{equation}
(\mathcal K^{\mathrm{tan}}_f\varphi_i)(x,v)\le 2\lambda_i\,\varphi_i(x,v).
\label{eq:gramian_supereig_family}
\end{equation}
Therefore, $\{\varphi_i\}_{i=1}^m$ defines a family of directional supereigenfunctions on the tangent bundle.
\end{proposition}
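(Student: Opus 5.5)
The plan is to evaluate $\varphi_i$ along a tangent-lifted trajectory and use the cocycle property to turn the Gramian integral into a shifted integral. Differentiating at $t=0$ then gives the resolvent identity \eqref{eq:gramian_resolvent_family}. Fix $i$ and $(x,v)$, and write $\varphi_i$ for the function defined in \eqref{eq:gramian_phi_family}.

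\textbf{Step 1 (cocycle).} The flow and its derivative satisfy $s_{t+\tau}(x)=s_t(s_\tau(x))$ and, by the chain rule,
\[
Y(t,s_\tau(x))\,Y(\tau,x)=Y(t+\tau,x).
\]
Apply this to the lifted point $(s_\tau(x),Y(\tau,x)v)$. The integrand then becomes $e^{-2\lambda_i t}\|Q_i^{1/2}(s_{t+\tau}(x))Y(t+\tau,x)v\|^2$. Substituting $r=t+\tau$ yields
\[
(U^{\tan}_\tau\varphi_i)(x,v)=e^{2\lambda_i\tau}\int_\tau^\infty e^{-2\lambda_i r}\,g(r)\,dr,
\]
where $g(r):=\|Q_i^{1/2}(s_r(x))Y(r,x)v\|^2\ge 0$.

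\textbf{Step 2 (differentiate at $\tau=0$).} Convergence of the integral defining $M_i(x)$ guarantees integrability of $e^{-2\lambda_i r}g(r)$. Since $g$ is continuous, the fundamental theorem of calculus and the product rule give
\[
\frac{d}{d\tau}\Big|_{\tau=0}(U^{\tan}_\tau\varphi_i)(x,v)=2\lambda_i\varphi_i(x,v)-g(0)=2\lambda_i\varphi_i(x,v)-v^\top Q_i(x)v.
\]
Continuity of $g$ follows from $f\in C^1$ and continuity of $Q_i$. Because $Y(0,x)=I$, we have $g(0)=v^\top Q_i(x)v$.

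\textbf{Step 3 (identify the derivative with $\mathcal K_f^{\tan}$).} The quantity in Step 2 is the generator of the tangent Koopman semigroup evaluated at $\varphi_i$, i.e. the derivative along \eqref{tangent_dynamics}. If $\varphi_i$ is $C^1$, this coincides with the formula $\nabla_x\varphi_i^\top f+\nabla_v\varphi_i^\top Df\,v$. This establishes \eqref{eq:gramian_resolvent_family}.

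\textbf{Step 4 (inequality).} Since $Q_i(x)\succeq 0$, we have $v^\top Q_i(x)v\ge 0$, and \eqref{eq:gramian_supereig_family} follows. Moreover $M_i(x)\succeq 0$ as an integral of PSD matrices, so $\varphi_i\ge 0$ and each $\varphi_i$ is an admissible directional supereigenfunction in the sense of Definition~\ref{def:supereigen} lifted to the tangent bundle. Note also that Proposition~\ref{prop:metric_char} does not directly give the identity \eqref{eq:gramian_resolvent_family}, so the trajectory argument above is the route taken.

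\textbf{Main obstacle.} The delicate point is regularity: justifying that $\varphi_i$ is differentiable in $x$, so that the classical generator formula applies, rather than only a derivative along trajectories. I would handle this in one of two ways. First option: interpret $\mathcal K_f^{\tan}$ as the Lie (orbital) derivative, which is all that Steps 1–2 require. Second option: assume locally uniform convergence of the integral, e.g. $\|Y(t,x)\|^2\|Q_i\|\le Ce^{(2\lambda_i-\delta)t}$ on compacts. Together with $C^1$ dependence of $Y$ on $x$, this allows differentiation under the integral sign by dominated convergence.
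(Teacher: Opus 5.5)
The paper states this proposition without a separate proof. It is implicitly the tangent-bundle instance of Theorem~\ref{thm:d2c_resolvent}(i), with probe $g_i(x,v)=v^\top Q_i(x)v$ and rate $2\lambda_i$. Your argument (cocycle, shift, differentiate at $0$) is exactly the pointwise proof of that resolvent identity, so it is correct and essentially the same route. It also fits the ``whenever the integral converges'' hypothesis better than invoking a $C_0$-semigroup growth bound would, since the probe is unbounded in $v$.

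For the regularity issue, keep your first option. Step~1 applied at the matrix level, together with invertibility of $Y(\tau,x)$, already shows that $\tau\mapsto M_i(s_\tau(x))$ is differentiable. This gives the identity in the form of Proposition~\ref{prop:metric_char}, with $\dot M_i$ taken as the derivative along the flow. Your second option is under-justified. Differentiating under the integral in $x$ requires a dominating bound on $\partial_x\big[Y(t,x)^\top Q_i(s_t(x))Y(t,x)\big]$, not merely on $\|Y(t,x)\|^2$. That derivative involves the second variational equation, so it needs $f\in C^2$, $Q_i\in C^1$, and a growth bound on $\partial_x Y$.
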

\begin{remark}[Relation to Lyapunov exponents]
The convergence of \eqref{eq:gramian_metric} requires the discount rate 
$\lambda$ to dominate the asymptotic growth of the tangent dynamics. In 
particular, if $\chi_{\max}$ denotes the largest Lyapunov exponent in the 
directions weighted by $Q$, then a sufficient condition for convergence is 
$\lambda>\chi_{\max}$. Thus, $\lambda$ acts as a design upper bound on 
directional growth rates, linking the Gramian construction to the MET-based 
characterization.
\end{remark}

\subsection{Data-to-Certificate Construction via Positive Resolvents}
\label{sec:resolvent_construction}
%===========================================================

We next develop a trajectory-based construction of Koopman
supereigenfunctions using the resolvent of the Koopman generator.
The central idea is to select a nonnegative probe function that
encodes the quantity of interest and map it directly to a certificate
through the positive resolvent. This provides a data-to-certificate
(D2C) construction: the certificate can be evaluated from trajectory
rollouts without first identifying an intermediate dynamical model or
a finite-dimensional Koopman representation.

Let $\{U_t\}_{t\ge0}$ denote the positive Koopman $C_0$-semigroup
on the Banach lattice $\mathcal F$, with infinitesimal generator
$\mathcal K_f$. Assume that the semigroup is exponentially bounded,
\begin{equation}
\|U_t\|\le M e^{\omega t},
\qquad t\ge0,
\label{eq:koopman_semigroup_growth}
\end{equation}
for some $M\ge1$ and $\omega\in\mathbb R$. For
$\lambda>\omega$, the resolvent of $\mathcal K_f$ is
\begin{equation}
R(\lambda;\mathcal K_f)
:=
(\lambda I-\mathcal K_f)^{-1}
=
\int_0^\infty e^{-\lambda t}U_t\,dt .
\label{eq:koopman_resolvent}
\end{equation}
Since $U_t$ is positive, $R(\lambda;\mathcal K_f)$ is also a
positive operator. Thus, a nonnegative probe $g\in\mathcal F_+$
is mapped to a nonnegative observable
\begin{equation}
\varphi_\lambda
=
R(\lambda;\mathcal K_f)g
=
\int_0^\infty e^{-\lambda t}U_tg\,dt
=
\int_0^\infty e^{-\lambda t}g(s_t(x))\,dt .
\label{eq:resolvent_supereig}
\end{equation}
The following theorem summarizes the resulting exact,
finite-rollout, and approximate certificate properties.

\begin{theorem}[Positive-resolvent data-to-certificate theorem]
\label{thm:d2c_resolvent}
Let $\{U_t\}_{t\ge0}$ be a positive Koopman $C_0$-semigroup on
$\mathcal F$ satisfying \eqref{eq:koopman_semigroup_growth}, and let
$g\in\mathcal F_+$. For $\lambda>\omega$, define
\begin{equation}
\varphi_\lambda
:=
R(\lambda;\mathcal K_f)g.
\label{eq:d2c_exact_resolvent}
\end{equation}
Then the following statements hold.

\begin{enumerate}

\item[(i)] \textbf{Exact positive certificate.}
The function $\varphi_\lambda\in D(\mathcal K_f)$ is nonnegative and
satisfies
\begin{equation}
(\lambda I-\mathcal K_f)\varphi_\lambda=g.
\label{eq:d2c_resolvent_identity}
\end{equation}
Consequently,
\begin{equation}
\mathcal K_f\varphi_\lambda
=
\lambda\varphi_\lambda-g
\le
\lambda\varphi_\lambda,
\label{eq:d2c_exact_supereig}
\end{equation}
and hence $\varphi_\lambda$ is a Koopman supereigenfunction with
rate $\lambda$.

\item[(ii)] \textbf{Finite-rollout certificate.}
For $T>0$, define
\begin{equation}
\varphi_{\lambda,T}
:=
\int_0^T e^{-\lambda t}U_tg\,dt .
\label{eq:d2c_finite_resolvent}
\end{equation}
Then
\begin{equation}
\mathcal K_f\varphi_{\lambda,T}
=
\lambda\varphi_{\lambda,T}
-g
+
e^{-\lambda T}U_Tg.
\label{eq:d2c_finite_identity}
\end{equation}
Moreover,
\begin{equation}
\|\varphi_\lambda-\varphi_{\lambda,T}\|
\le
\frac{M}{\lambda-\omega}
e^{-(\lambda-\omega)T}\|g\|,
\label{eq:d2c_truncation_error}
\end{equation}
and therefore
\begin{equation}
\mathcal K_f\varphi_{\lambda,T}
\le
\lambda\varphi_{\lambda,T}
+
\varepsilon_T,
\qquad
\varepsilon_T
:=
M e^{-(\lambda-\omega)T}\|g\|.
\label{eq:d2c_finite_supereig}
\end{equation}

\item[(iii)] \textbf{Approximate data-driven certificate.}
Let $\widehat{\varphi}_{\lambda,T}\in D(\mathcal K_f)$ be an
approximation of $\varphi_{\lambda,T}$ satisfying
\begin{equation}
\|\widehat{\varphi}_{\lambda,T}
-\varphi_{\lambda,T}\|
\le \varepsilon_0,
\qquad
\|\mathcal K_f(
\widehat{\varphi}_{\lambda,T}
-\varphi_{\lambda,T})\|
\le \varepsilon_1 .
\label{eq:d2c_graph_error}
\end{equation}
Then
\begin{equation}
\mathcal K_f\widehat{\varphi}_{\lambda,T}
\le
\lambda\widehat{\varphi}_{\lambda,T}
+
\varepsilon_{\mathrm{D2C}},
\label{eq:d2c_approx_supereig}
\end{equation}
where
\begin{equation}
\varepsilon_{\mathrm{D2C}}
:=
M e^{-(\lambda-\omega)T}\|g\|
+
\varepsilon_1
+
|\lambda|\varepsilon_0 .
\label{eq:d2c_total_error}
\end{equation}
Hence, as $T\to\infty$ and
$\varepsilon_0,\varepsilon_1\to0$, the approximate certificate
converges, in the graph-norm sense specified above, to an exact
Koopman supereigenfunction.
\end{enumerate}
\end{theorem}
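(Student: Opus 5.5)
The plan is to treat the three parts in order, using only standard $C_0$-semigroup facts together with the positivity \eqref{eq:koopman_positive}. For (i), I will use that for $\lambda>\omega$ the Bochner integral \eqref{eq:koopman_resolvent} converges absolutely, since $\|e^{-\lambda t}U_tg\|\le Me^{-(\lambda-\omega)t}\|g\|$. I then verify directly that it inverts $\lambda I-\mathcal K_f$. For $h>0$, a change of variables gives
\[
\frac{U_h-I}{h}\varphi_\lambda=\frac{e^{\lambda h}-1}{h}\int_h^\infty e^{-\lambda s}U_sg\,ds-\frac1h\int_0^h e^{-\lambda s}U_sg\,ds .
\]
As $h\downarrow0$, strong continuity sends this to $\lambda\varphi_\lambda-g$. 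Hence $\varphi_\lambda\in D(\mathcal K_f)$ and \eqref{eq:d2c_resolvent_identity} holds. Nonnegativity follows because $\mathcal F_+$ is a closed convex cone, each integrand $e^{-\lambda t}U_tg$ lies in $\mathcal F_+$, and Riemann sums of the integral stay in the cone. Dropping $-g\le0$ then gives \eqref{eq:d2c_exact_supereig}.

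For (ii), I repeat the same difference-quotient computation on the finite interval $[0,T]$:
\[
\frac{U_h-I}{h}\varphi_{\lambda,T}=\frac{e^{\lambda h}-1}{h}\int_h^{T+h}e^{-\lambda s}U_sg\,ds+\frac1h\Big(\int_T^{T+h}-\int_0^h\Big)e^{-\lambda s}U_sg\,ds .
\]
Letting $h\downarrow0$ gives \eqref{eq:d2c_finite_identity}, with boundary term $e^{-\lambda T}U_Tg$. The truncation bound \eqref{eq:d2c_truncation_error} is immediate from
\[
\varphi_\lambda-\varphi_{\lambda,T}=\int_T^\infty e^{-\lambda t}U_tg\,dt,
\qquad
\int_T^\infty Me^{-(\lambda-\omega)t}\|g\|\,dt=\frac{M}{\lambda-\omega}e^{-(\lambda-\omega)T}\|g\|.
\]
For \eqref{eq:d2c_finite_supereig}, I drop $-g\le0$ and bound the remainder by $\|e^{-\lambda T}U_Tg\|\le Me^{-(\lambda-\omega)T}\|g\|=\varepsilon_T$.

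For (iii), I split
\[
\mathcal K_f\widehat\varphi_{\lambda,T}=\mathcal K_f\varphi_{\lambda,T}+\mathcal K_f(\widehat\varphi_{\lambda,T}-\varphi_{\lambda,T}).
\]
I apply (ii) to the first term, which yields $\lambda\varphi_{\lambda,T}+\varepsilon_T$. The second term is bounded by $\varepsilon_1$. I then rewrite
\[
\lambda\varphi_{\lambda,T}=\lambda\widehat\varphi_{\lambda,T}+\lambda(\varphi_{\lambda,T}-\widehat\varphi_{\lambda,T}),
\]
and bound the last piece by $|\lambda|\varepsilon_0$. Summing the three error contributions gives exactly \eqref{eq:d2c_total_error}. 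Convergence as $T\to\infty$ and $\varepsilon_0,\varepsilon_1\to0$ follows because $\lambda>\omega$ drives $\varepsilon_T\to0$.

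The main obstacle is the passage from norm bounds to the pointwise, order-type inequalities in \eqref{eq:d2c_finite_supereig} and \eqref{eq:d2c_approx_supereig}. The step ``$\|h\|\le\varepsilon\Rightarrow h\le\varepsilon$'' holds when the norm dominates the pointwise sup, as for $\mathcal F=C_b(\Omega)$ or $C(\Omega)$ with the sup norm. In that case $\varepsilon$ is read as the constant function $\varepsilon\mathbf 1$. For $L^p$ spaces this step fails, and the inequalities must instead be read in norm, i.e.\ for the positive part, $\|(\mathcal K_f\widehat\varphi_{\lambda,T}-\lambda\widehat\varphi_{\lambda,T})_+\|\le\varepsilon_{\mathrm{D2C}}$. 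I would state the sup-norm (or order-unit) assumption explicitly, and note the norm-sense version for $L^p$, so that every use of ``$\le\varepsilon$'' is justified.
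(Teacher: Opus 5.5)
Your proposal is correct and follows the paper's proof essentially step for step. The paper likewise uses the Laplace representation of the resolvent for (i), the tail integral together with the growth bound on the boundary term $e^{-\lambda T}U_Tg$ for (ii), and the split of the error $e=\widehat\varphi_{\lambda,T}-\varphi_{\lambda,T}$ into $\mathcal K_f e-\lambda e$ for (iii); you additionally write out the difference-quotient computations that the paper only asserts. Your caveat that passing from $\|h\|\le\varepsilon$ to the pointwise bound $h\le\varepsilon$ requires a sup-norm (order-unit) setting is well taken, since the paper uses this step without comment. For the closing graph-norm convergence claim, also record that $\varphi_{\lambda,T}\to\varphi_\lambda$ in graph norm: this follows from the truncation bound together with $\mathcal K_f(\varphi_{\lambda,T}-\varphi_\lambda)=\lambda(\varphi_{\lambda,T}-\varphi_\lambda)+e^{-\lambda T}U_Tg$.
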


\begin{proof}
Since $U_t$ is positive and $g\in\mathcal F_+$,
$U_tg\ge0$ for all $t\ge0$. Hence
$\varphi_\lambda=R(\lambda;\mathcal K_f)g\ge0$.
For $\lambda>\omega$, the Laplace representation of the resolvent
gives
\[
(\lambda I-\mathcal K_f)
R(\lambda;\mathcal K_f)g=g,
\]
which proves \eqref{eq:d2c_resolvent_identity} and therefore
\eqref{eq:d2c_exact_supereig}.

For the finite-horizon construction, differentiating the semigroup
representation gives
\[
\mathcal K_f\varphi_{\lambda,T}
=
\lambda\varphi_{\lambda,T}
-g+e^{-\lambda T}U_Tg.
\]
Furthermore,
\begin{align}
\|\varphi_\lambda-\varphi_{\lambda,T}\|
&\le
\int_T^\infty e^{-\lambda t}\|U_tg\|\,dt \nonumber\\
&\le
M\|g\|
\int_T^\infty e^{-(\lambda-\omega)t}\,dt,
\end{align}
which yields \eqref{eq:d2c_truncation_error}. The growth bound
\[
\|e^{-\lambda T}U_Tg\|
\le
M e^{-(\lambda-\omega)T}\|g\|
\]
then gives \eqref{eq:d2c_finite_supereig}.

Finally, let
$
e=\widehat{\varphi}_{\lambda,T}-\varphi_{\lambda,T}.
$
Using \eqref{eq:d2c_finite_identity},
\[
\mathcal K_f\widehat{\varphi}_{\lambda,T}
=
\lambda\widehat{\varphi}_{\lambda,T}
-g
+
e^{-\lambda T}U_Tg
+
\mathcal K_fe-\lambda e.
\]
Since $g\ge0$, the semigroup growth bound together with
\eqref{eq:d2c_graph_error} gives
\[
\mathcal K_f\widehat{\varphi}_{\lambda,T}
\le
\lambda\widehat{\varphi}_{\lambda,T}
+
M e^{-(\lambda-\omega)T}\|g\|
+
\varepsilon_1
+
|\lambda|\varepsilon_0,
\]
which proves \eqref{eq:d2c_approx_supereig}.
\end{proof}

\begin{remark}[Semigroup versus probe-dependent growth rates]
\label{rem:resolvent_growth_rates}
The condition $\lambda>\omega$ in
Theorem~\ref{thm:d2c_resolvent} is a function-space condition
ensuring existence of the resolvent $R(\lambda;\mathcal K_f)$.
For a particular probe $g$, however, the trajectory representation
\eqref{eq:resolvent_supereig} may converge under a weaker,
probe-dependent condition.

In particular, define
\begin{equation}
\lambda_g^\star
:=
\limsup_{t\to\infty}
\frac{1}{t}
\log
\sup_{x\in\Omega}
g(s_t(x)).
\label{eq:probe_growth_rate}
\end{equation}
Whenever this quantity is finite, any
$\lambda>\lambda_g^\star$ yields convergence of the
probe-specific trajectory integral under the corresponding
exponential growth bound. Thus, $\omega$ provides a uniform
function-space condition, whereas $\lambda_g^\star$ provides a
potentially less conservative condition tailored to the selected
probe.
\end{remark}

For data-driven implementation, the probe-dependent growth rate
can be estimated directly from trajectory rollouts. Given sampled
initial conditions $\{x^{(j)}\}_{j=1}^{N}$ and a rollout horizon
$T$, define
\begin{equation}
\widehat{\lambda}_g(T)
:=
\frac{1}{T}
\log
\left(
\max_{1\le j\le N}
\frac{g(s_T(x^{(j)}))+\epsilon}
     {g(x^{(j)})+\epsilon}
\right),
\qquad \epsilon>0.
\label{eq:empirical_probe_growth}
\end{equation}
A practical choice is then
\begin{equation}
\lambda
=
\widehat{\lambda}_g(T)+\delta,
\qquad
\delta>0,
\label{eq:lambda_selection}
\end{equation}
where $\delta$ provides a margin for finite-horizon and sampling
effects.

The choice of probe $g$ determines what dynamical quantity is
encoded by the resulting certificate. For example, probes that
emphasize individual state coordinates or selected state
directions produce corresponding growth envelopes, whereas probes
that penalize unsafe regions produce risk-sensitive certificates.
Thus, different nonnegative probes generate a family of
task-dependent supereigenfunctions through the same positive
resolvent construction.

For a collection of probes
$g_i\in\mathcal F_+$, $i=1,\ldots,m$, the construction can be
applied componentwise:
\begin{equation}
\varphi_i
=
R(\lambda_i;\mathcal K_f)g_i,
\qquad i=1,\ldots,m.
\label{eq:vector_resolvent_construction}
\end{equation}
Defining
\[
\Phi(x)
=
\begin{bmatrix}
\varphi_1(x) & \cdots & \varphi_m(x)
\end{bmatrix}^{\!\top},
\qquad
\Lambda
=
\operatorname{diag}(\lambda_1,\ldots,\lambda_m),
\]
gives the componentwise inequality
\begin{equation}
\mathcal K_f\Phi(x)
\preceq
\Lambda\Phi(x),
\label{eq:vector_resolvent_supereig}
\end{equation}
which provides the starting point for the comparison bounds and
control constructions developed in the next section.

\section{Certificate-Based Analysis and Control}
\label{sec:application_modeling}
%===========================================================

The constructions in the previous section produce nonnegative
observables whose evolution is bounded by linear comparison
dynamics. In particular, given a collection of supereigenfunctions
$\{\varphi_i\}_{i=1}^m$, define the certificate coordinate map
\begin{equation}
\Phi(x)
:=
\begin{bmatrix}
\varphi_1(x)\\
\vdots\\
\varphi_m(x)
\end{bmatrix}
\in\mathbb R_{\ge0}^m,
\qquad
\Lambda
:=
\operatorname{diag}(\lambda_1,\ldots,\lambda_m).
\label{eq:certificate_coordinates}
\end{equation}
The componentwise inequalities
\begin{equation}
\mathcal K_f\Phi(x)
\preceq
\Lambda\Phi(x)
\label{eq:certificate_generator_ineq}
\end{equation}
imply that the evolution of $\Phi(x(t))$ is dominated by a
positive linear comparison system.

This observation provides the common mechanism underlying the
applications developed below. Rather than requiring an exact
finite-dimensional model of the state dynamics, the certificate
coordinates provide upper envelopes on selected dynamical
quantities. These envelopes yield set-valued uncertainty
predictions, stability guarantees when the comparison dynamics
are contractive, and affine constraints for control synthesis.

\begin{theorem}[Comparison principle in certificate coordinates]
\label{thm:certificate_comparison}
Let $\Phi:\Omega\rightarrow\mathbb R_{\ge0}^m$ be continuously
differentiable and suppose
\begin{equation}
\mathcal K_f\Phi(x)
\preceq
\Lambda\Phi(x),
\qquad x\in\Omega,
\label{eq:exact_certificate_ineq}
\end{equation}
where
$\Lambda=\operatorname{diag}(\lambda_1,\ldots,\lambda_m)$.
Then, along every trajectory $x(t)=s_t(x_0)$,
\begin{equation}
\Phi(x(t))
\preceq
e^{\Lambda t}\Phi(x_0),
\qquad t\ge0.
\label{eq:exact_certificate_comparison}
\end{equation}
Equivalently,
\begin{equation}
\varphi_i(x(t))
\le
e^{\lambda_i t}\varphi_i(x_0),
\qquad
i=1,\ldots,m.
\label{eq:exact_scalar_comparison}
\end{equation}
\end{theorem}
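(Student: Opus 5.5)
Since $\Lambda$ is diagonal, the vector inequality \eqref{eq:exact_certificate_ineq} decouples into $m$ scalar inequalities $(\mathcal K_f\varphi_i)(x)\le\lambda_i\varphi_i(x)$, one for each component. Likewise $e^{\Lambda t}=\operatorname{diag}(e^{\lambda_1 t},\ldots,e^{\lambda_m t})$ is diagonal with nonnegative entries, so \eqref{eq:exact_certificate_comparison} holds if and only if the scalar bounds \eqref{eq:exact_scalar_comparison} hold for every $i$. This gives the claimed equivalence. It therefore suffices to fix $i$ and prove the scalar envelope. That is exactly the statement of Proposition~\ref{prop:exp_envelope} applied to $\varphi_i$, and I would invoke it directly. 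For completeness, I would also write out the short Gr\"onwall argument.

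Fix $x_0\in\Omega$ and set $x(t)=s_t(x_0)$, which exists for all $t\ge0$ by the standing forward-completeness assumption. Since $\varphi_i\in C^1$ and $f\in C^1$, the map $t\mapsto\varphi_i(x(t))$ is $C^1$. By \eqref{eq:generator_formula}, its derivative is $\nabla\varphi_i(x(t))\cdot f(x(t))=(\mathcal K_f\varphi_i)(x(t))\le\lambda_i\varphi_i(x(t))$. Define $h(t):=e^{-\lambda_i t}\varphi_i(x(t))$. Then
\[
\dot h(t)=e^{-\lambda_i t}\bigl((\mathcal K_f\varphi_i)(x(t))-\lambda_i\varphi_i(x(t))\bigr)\le 0,
\]
so $h$ is nonincreasing and $h(t)\le h(0)=\varphi_i(x_0)$. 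Multiplying by $e^{\lambda_i t}>0$ gives $\varphi_i(x(t))\le e^{\lambda_i t}\varphi_i(x_0)$. Stacking over $i$ yields \eqref{eq:exact_certificate_comparison}.

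There is no real obstacle. The only point needing care is the reduction from the vector comparison to scalar ones. In general, comparison lemmas for vector differential inequalities require the comparison matrix to be Metzler, i.e., to have nonnegative off-diagonal entries. Here $\Lambda$ is diagonal, so this holds trivially and the components decouple. Nonnegativity of $\Phi$ is not actually needed for the bound itself. It matters only for reading \eqref{eq:exact_certificate_comparison} as a cone-valued envelope.
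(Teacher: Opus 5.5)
Your proof is correct and follows the paper's own argument: decouple componentwise using the diagonal $\Lambda$, bound $\frac{d}{dt}\varphi_i(x(t))=(\mathcal K_f\varphi_i)(x(t))\le\lambda_i\varphi_i(x(t))$, apply Gr\"onwall (your integrating factor $e^{-\lambda_i t}$ is its differential form), and stack the scalar bounds. Your remarks that nonnegativity of $\Phi$ is not needed for the bound, and that the diagonal (hence trivially Metzler) structure is what allows the decoupling, are accurate.
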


\begin{proof}
Along a trajectory $x(t)=s_t(x_0)$,
\[
\frac{d}{dt}\varphi_i(x(t))
=
(\mathcal K_f\varphi_i)(x(t))
\le
\lambda_i\varphi_i(x(t)).
\]
Gronwall's inequality gives
$\varphi_i(x(t))
\le e^{\lambda_i t}\varphi_i(x_0)$.
Stacking the componentwise inequalities gives
\eqref{eq:exact_certificate_comparison}.\qed
\end{proof}

\begin{corollary}[Comparison with approximate D2C certificates]
\label{cor:approx_certificate_comparison}
Let
$\widehat{\Phi}:\Omega\rightarrow\mathbb R_{\ge0}^m$
satisfy
\begin{equation}
\mathcal K_f\widehat{\Phi}(x)
\preceq
\Lambda\widehat{\Phi}(x)+\varepsilon,
\label{eq:approx_certificate_ineq}
\end{equation}
where
$\varepsilon=[\varepsilon_1,\ldots,\varepsilon_m]^\top
\in\mathbb R_{\ge0}^m$.
Then
\begin{equation}
\widehat{\Phi}(x(t))
\preceq
e^{\Lambda t}\widehat{\Phi}(x_0)
+
\int_0^t
e^{\Lambda(t-\tau)}\varepsilon\,d\tau .
\label{eq:approx_certificate_comparison}
\end{equation}
For $\lambda_i\neq0$, this gives
\begin{equation}
\widehat{\varphi}_i(x(t))
\le
e^{\lambda_i t}\widehat{\varphi}_i(x_0)
+
\frac{\varepsilon_i}{\lambda_i}
\left(e^{\lambda_i t}-1\right),
\label{eq:approx_scalar_comparison}
\end{equation}
whereas for $\lambda_i=0$,
\begin{equation}
\widehat{\varphi}_i(x(t))
\le
\widehat{\varphi}_i(x_0)+\varepsilon_i t.
\label{eq:approx_scalar_zero}
\end{equation}
\end{corollary}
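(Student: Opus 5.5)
The plan is to argue componentwise, as in the proof of Theorem~\ref{thm:certificate_comparison}, and to replace the plain Gr\"onwall step by the integrating-factor form of the comparison lemma for an affine scalar inequality. Fix a trajectory $x(t)=s_t(x_0)$ and an index $i$, and set $y_i(t):=\widehat{\varphi}_i(x(t))$. Because $\widehat{\Phi}$ is (assumed) continuously differentiable, the chain rule gives $\dot y_i(t)=(\mathcal K_f\widehat{\varphi}_i)(x(t))$. The $i$-th row of \eqref{eq:approx_certificate_ineq} then yields the scalar differential inequality
\[
\dot y_i(t)\le \lambda_i y_i(t)+\varepsilon_i,\qquad t\ge0 .
\]
This step uses that $\Lambda$ is diagonal, so the rows decouple.

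Next I will multiply by the integrating factor $e^{-\lambda_i t}>0$. This gives $\frac{d}{dt}\bigl(e^{-\lambda_i t}y_i(t)\bigr)\le e^{-\lambda_i t}\varepsilon_i$. Integrating from $0$ to $t$ and multiplying back by $e^{\lambda_i t}$ produces
\[
y_i(t)\le e^{\lambda_i t}y_i(0)+\int_0^t e^{\lambda_i(t-\tau)}\varepsilon_i\,d\tau .
\]
Stacking these scalar bounds over $i$ is exactly \eqref{eq:approx_certificate_comparison}, since $e^{\Lambda(t-\tau)}=\operatorname{diag}(e^{\lambda_i(t-\tau)})$ and $\preceq$ is the componentwise order. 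In this form the nonnegativity of $\varepsilon$ is not even needed; it only guarantees that the forcing term is a genuine nonnegative inflation of the exact envelope.

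Finally, I will evaluate the integral in closed form. For $\lambda_i\neq0$,
\[
\int_0^t e^{\lambda_i(t-\tau)}\,d\tau=\frac{e^{\lambda_i t}-1}{\lambda_i},
\]
which gives \eqref{eq:approx_scalar_comparison}. For $\lambda_i=0$ the integrand is $1$, so the integral equals $t$, which gives \eqref{eq:approx_scalar_zero}.

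The only point needing care is the regularity behind the chain rule. If $\widehat{\Phi}$ is merely in the domain of the generator rather than $C^1$, I would run the same argument with the semigroup instead. From $\frac{d}{dt}U_t\widehat{\varphi}_i=U_t\mathcal K_f\widehat{\varphi}_i$ and positivity of $U_t$, applied to \eqref{eq:approx_certificate_ineq}, we get $\frac{d}{dt}U_t\widehat{\varphi}_i\le \lambda_i U_t\widehat{\varphi}_i+\varepsilon_i$, using $U_t\mathbf 1=\mathbf 1$. The integrating-factor step then goes through verbatim in the ordered space. Otherwise the proof is routine.
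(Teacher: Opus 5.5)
Your proposal is correct and follows the paper's proof: the paper also reduces to the decoupled scalar inequalities $\frac{d}{dt}\widehat{\varphi}_i(x(t))\le\lambda_i\widehat{\varphi}_i(x(t))+\varepsilon_i$ and applies the scalar comparison lemma componentwise, and your integrating-factor computation with $e^{-\lambda_i t}$ just makes that lemma explicit. Your side remarks are also sound: nonnegativity of $\varepsilon$ is not needed for the bound itself, and the semigroup variant handles non-$C^1$ certificates, provided constants lie in $\mathcal F$ so that $U_t\mathbf 1=\mathbf 1$ makes sense.
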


\begin{proof}
For each component,
\[
\frac{d}{dt}\widehat{\varphi}_i(x(t))
\le
\lambda_i\widehat{\varphi}_i(x(t))+\varepsilon_i.
\]
The result follows from the scalar comparison lemma, applied
componentwise.\qed
\end{proof}

For the finite-rollout and data-driven certificates constructed in
Theorem~\ref{thm:d2c_resolvent}, the residual vector
$\varepsilon$ in
Corollary~\ref{cor:approx_certificate_comparison}
is determined explicitly by the certificate approximation error.
In particular, for the $i$th data-driven certificate one may take
\[
\varepsilon_i
=
\varepsilon_{\mathrm{D2C},i}.
\]
Hence the effects of finite rollout horizon and certificate
approximation propagate directly into the comparison envelope as
an additive, explicitly quantified uncertainty term.

%=============================================================================
\subsection{Uncertainty Propagation via Inequality Lifts}
\label{subsec:modeling}
%=============================================================================

The comparison bounds above provide a direct mechanism for
propagating uncertainty. Rather than predicting individual
trajectories, the certificate coordinates define certified outer
envelopes for the evolution of selected observables. For exact
supereigenfunctions these envelopes follow from
Theorem~\ref{thm:certificate_comparison}; for finite-rollout or
data-driven certificates they follow from
Corollary~\ref{cor:approx_certificate_comparison}, with the D2C
residual appearing explicitly as an inflation of the envelope.

\begin{corollary}[Forward uncertainty bounds]
\label{cor:uncertainty}
Let $X_0\subset\Omega$ be a set of initial conditions and define
\[
\bar\varphi_i(0):=\sup_{x_0\in X_0}\widehat\varphi_i(x_0).
\]
Suppose
\[
\mathcal K_f\widehat\Phi
\preceq
\Lambda\widehat\Phi+\varepsilon,
\qquad
\varepsilon\in\mathbb R_{\ge0}^m.
\]
Then the reachable set
\[
\mathcal R_t:=\{s_t(x_0):x_0\in X_0\}
\]
satisfies
\[
\mathcal R_t
\subseteq
\bigcap_{i=1}^m
\left\{
x\in\Omega:
\widehat\varphi_i(x)
\le
e^{\lambda_i t}\bar\varphi_i(0)
+
\eta_i(t)
\right\},
\]
where
\[
\eta_i(t):=
\begin{cases}
\dfrac{\varepsilon_i}{\lambda_i}
\left(e^{\lambda_i t}-1\right), & \lambda_i\neq0,\\[2mm]
\varepsilon_i t, & \lambda_i=0.
\end{cases}
\]
For exact supereigenfunctions, $\varepsilon=0$ and hence
$\eta_i(t)=0$.
\end{corollary}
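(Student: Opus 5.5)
The plan is to reduce the reachable-set inclusion to a pointwise bound along each individual trajectory, obtained from Corollary~\ref{cor:approx_certificate_comparison}, and then take a supremum over the initial set. Fix $t\ge0$ and an arbitrary point $x\in\mathcal R_t$. By definition there is some $x_0\in X_0$ with $x=s_t(x_0)$. Consider the trajectory $x(\tau)=s_\tau(x_0)$ on $[0,t]$; it is well defined and remains in $\Omega$ by the standing forward-completeness assumption.

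Along this trajectory the hypothesis $\mathcal K_f\widehat\Phi\preceq\Lambda\widehat\Phi+\varepsilon$ gives, componentwise,
\[
\tfrac{d}{d\tau}\widehat\varphi_i(x(\tau))\le\lambda_i\widehat\varphi_i(x(\tau))+\varepsilon_i .
\]
Corollary~\ref{cor:approx_certificate_comparison} then yields
\[
\widehat\varphi_i(s_t(x_0))\le e^{\lambda_i t}\widehat\varphi_i(x_0)+\eta_i(t).
\]
Here $\eta_i(t)$ is exactly the quantity in \eqref{eq:approx_scalar_comparison} when $\lambda_i\neq0$, and the one in \eqref{eq:approx_scalar_zero} when $\lambda_i=0$. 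Equivalently, $\eta_i(t)=\int_0^t e^{\lambda_i(t-\tau)}\varepsilon_i\,d\tau$, which is nonnegative because $\varepsilon_i\ge0$.

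Next, since $e^{\lambda_i t}>0$ and $\widehat\varphi_i(x_0)\le\bar\varphi_i(0)$ by definition of the supremum, monotonicity gives
\[
\widehat\varphi_i(x)\le e^{\lambda_i t}\bar\varphi_i(0)+\eta_i(t)
\]
for every $i=1,\dots,m$. This holds for all $i$ simultaneously, so $x$ lies in the intersection on the right-hand side. Because $x\in\mathcal R_t$ was arbitrary, the inclusion follows. For exact supereigenfunctions we have $\varepsilon=0$, so $\eta_i\equiv0$, which recovers the envelope of Theorem~\ref{thm:certificate_comparison}.

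The argument is essentially routine; the only delicate point is the role of $\bar\varphi_i(0)$. If $\bar\varphi_i(0)=+\infty$, the $i$th constraint is vacuous and the inclusion still holds trivially, and I would note this explicitly. Otherwise, the step to check is that multiplying the bound $\widehat\varphi_i(x_0)\le\bar\varphi_i(0)$ by the positive factor $e^{\lambda_i t}$ preserves the inequality. This requires no sign assumption on $\lambda_i$, so no case split beyond the form of $\eta_i$ is needed.
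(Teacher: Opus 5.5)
Your proposal is correct and matches the paper's proof: apply Corollary~\ref{cor:approx_certificate_comparison} along the trajectory from each $x_0\in X_0$, replace $\widehat\varphi_i(x_0)$ by $\bar\varphi_i(0)$ using $e^{\lambda_i t}>0$, and intersect over $i=1,\ldots,m$. Your remarks on the case $\bar\varphi_i(0)=+\infty$ and on $\eta_i(t)\ge0$ are harmless refinements that the paper leaves implicit.
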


\begin{proof}
For any $x_0\in X_0$, Corollary~\ref{cor:approx_certificate_comparison}
gives
\[
\widehat\varphi_i(s_t(x_0))
\le
e^{\lambda_i t}\widehat\varphi_i(x_0)+\eta_i(t)
\le
e^{\lambda_i t}\bar\varphi_i(0)+\eta_i(t).
\]
Taking all $i=1,\ldots,m$ yields the stated outer approximation.
\end{proof}
%=============================================================================
%=============================================================================
\subsection{Stability and Stabilizing Control via Supereigenfunctions}
\label{subsec:stability_control}
%=============================================================================

The comparison principle provides a direct route to stability
analysis and control design. Stability follows from contracting
certificate coordinates together with a coercivity condition relating
the certificates to the state, while control synthesis amounts to
shaping their open-loop growth rates to desired negative values.

\subsubsection{Stability from contracting certificates}

\begin{theorem}[Stability from contracting certificates]
\label{thm:stability}
Let $\varphi_i:\Omega\to\mathbb R_{\ge0}$, $i=1,\ldots,m$, satisfy
\[
\mathcal K_f\varphi_i(x)\le\lambda_i\varphi_i(x),
\qquad \lambda_i<0,
\]
and define
\[
\Phi(x):=(\varphi_1(x),\ldots,\varphi_m(x))^\top.
\]
Suppose there exist class-$\mathcal K_\infty$ functions
$\alpha_1,\alpha_2$ such that
\[
\alpha_1(\|x\|)
\le
\mathbf 1^\top\Phi(x)
\le
\alpha_2(\|x\|),
\qquad x\in\Omega.
\]
Then the origin is globally asymptotically stable, with
\[
\|x(t)\|
\le
\alpha_1^{-1}
\left(
e^{\lambda_{\max}t}\alpha_2(\|x(0)\|)
\right),
\qquad
\lambda_{\max}:=\max_i\lambda_i<0.
\]
\end{theorem}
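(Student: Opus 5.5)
The plan is to reduce everything to the scalar sum $V(x):=\mathbf 1^\top\Phi(x)=\sum_i\varphi_i(x)$ and treat it as a Lyapunov function. By Theorem~\ref{thm:certificate_comparison}, along every trajectory $x(t)=s_t(x_0)$ we have $\varphi_i(x(t))\le e^{\lambda_i t}\varphi_i(x_0)$ for each $i$. Since $\lambda_i\le\lambda_{\max}<0$, $t\ge0$ and $\varphi_i(x_0)\ge0$, each term satisfies $e^{\lambda_i t}\varphi_i(x_0)\le e^{\lambda_{\max}t}\varphi_i(x_0)$. Summing over $i$ gives
\[
V(x(t))\le e^{\lambda_{\max}t}V(x_0).
\]
Nonnegativity of the $\varphi_i$ is essential here: without it the replacement of $\lambda_i$ by $\lambda_{\max}$ could fail.

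Next we insert the sandwich bounds. The lower bound $\alpha_1(\|x(t)\|)\le V(x(t))$ and the upper bound $V(x_0)\le\alpha_2(\|x_0\|)$ give
\[
\alpha_1(\|x(t)\|)\le e^{\lambda_{\max}t}\alpha_2(\|x_0\|).
\]
Because $\alpha_1\in\mathcal K_\infty$, it is a strictly increasing bijection of $[0,\infty)$. Its inverse therefore exists on all of $[0,\infty)$ and is increasing, so applying $\alpha_1^{-1}$ to both sides yields the stated estimate. This is exactly where $\mathcal K_\infty$ rather than merely class-$\mathcal K$ is needed for $\alpha_1$: otherwise the right-hand side might lie outside the range of $\alpha_1$.

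To conclude global asymptotic stability, we read the estimate as a class-$\mathcal{KL}$ bound $\|x(t)\|\le\beta(\|x_0\|,t)$ with
\[
\beta(r,t):=\alpha_1^{-1}\bigl(e^{\lambda_{\max}t}\alpha_2(r)\bigr).
\]
This $\beta$ is class-$\mathcal K$ in $r$ as a composition of class-$\mathcal K$ maps, and decreases to $0$ as $t\to\infty$ since $\alpha_1^{-1}(0)=0$ and $\alpha_1^{-1}$ is continuous. Lyapunov stability follows: given $\epsilon>0$, choose $\delta$ with $\alpha_2(\delta)<\alpha_1(\epsilon)$, so that $\beta(r,t)\le\alpha_1^{-1}(\alpha_2(r))<\epsilon$ for all $r<\delta$ and $t\ge0$. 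Global attraction follows from $\beta(\|x_0\|,t)\to0$ for every $x_0$.

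The only delicate point, and the step I expect to require care, is the well-posedness of the comparison step. Theorem~\ref{thm:certificate_comparison} assumes $\Phi\in C^1$ and trajectories that stay in $\Omega$; the latter is provided by the standing forward-completeness assumption. We also need the origin to be an equilibrium in $\Omega$. I would obtain this from the bounds themselves: $\alpha_2(0)=0$ forces $\Phi(0)=0$, and then the estimate gives $x(t)\equiv0$ for $x_0=0$. Everything else is routine monotonicity.
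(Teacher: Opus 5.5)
Your proposal is correct and follows the paper's argument: apply Theorem~\ref{thm:certificate_comparison} componentwise, use nonnegativity of the $\varphi_i$ to replace each $\lambda_i$ by $\lambda_{\max}$ in the bound on $\mathbf 1^\top\Phi(x(t))$, and then invert through the $\mathcal K_\infty$ sandwich bounds. The paper leaves implicit the $\mathcal{KL}$ reading, the fact that the origin is an equilibrium (via $\alpha_2(0)=0$), and the $C^1$ requirement on $\Phi$ needed for the comparison step; you handle all three correctly.
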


\begin{proof}
By Theorem~\ref{thm:certificate_comparison},
\[
\mathbf 1^\top\Phi(x(t))
\le
e^{\lambda_{\max}t}\mathbf 1^\top\Phi(x(0)).
\]
The result follows directly from the coercivity bounds.\qed
\end{proof}

\begin{remark}
If, more specifically,
\[
c_1\|x\|^p
\le
\mathbf 1^\top\Phi(x)
\le
c_2\|x\|^p
\]
for some $c_1,c_2,p>0$, then
\[
\|x(t)\|
\le
\left(\frac{c_2}{c_1}\right)^{1/p}
e^{\lambda_{\max}t/p}\|x(0)\|,
\]
and the origin is globally exponentially stable.
\end{remark}

\subsubsection{Stabilizing control via inequality shaping}

Consider the control-affine system
\begin{equation}
\dot x=f(x)+G(x)u,
\qquad
u\in\mathcal U\subset\mathbb R^p.
\label{eq:control_affine}
\end{equation}
Let $\varphi_i$ be supereigenfunctions of the drift dynamics,
\begin{equation}
\mathcal K_f\varphi_i
\le
\lambda_i\varphi_i,
\qquad i=1,\ldots,m.
\label{eq:drift_supereig}
\end{equation}
For desired rates $\beta_i<0$, stabilization is achieved by enforcing
\begin{equation}
\mathcal K_f\varphi_i
+
(\mathcal K_G\varphi_i)u
\le
\beta_i\varphi_i,
\qquad i=1,\ldots,m,
\label{eq:closed_loop_rate}
\end{equation}
where
\[
\mathcal K_G\varphi_i:=\nabla\varphi_i^\top G.
\]

\begin{proposition}[Affine sufficient control constraints]
\label{prop:control_constraint}
A sufficient condition for \eqref{eq:closed_loop_rate} is
\begin{equation}
(\mathcal K_G\varphi_i)(x)u
\le
-(\lambda_i-\beta_i)\varphi_i(x),
\qquad i=1,\ldots,m.
\label{eq:affine_control_constraint}
\end{equation}
\end{proposition}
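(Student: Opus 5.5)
The plan is a direct pointwise argument: fix $x\in\Omega$ and an index $i$, and show that any input $u$ satisfying \eqref{eq:affine_control_constraint} also satisfies \eqref{eq:closed_loop_rate} at that $x$. No trajectory-level reasoning or Gr\"onwall step is needed, because both constraints are pointwise inequalities in $(x,u)$. The only ingredient from earlier is the drift supereigenfunction inequality \eqref{eq:drift_supereig}, which holds for every $x\in\Omega$.

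The key steps are as follows.

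First, split the closed-loop derivative into drift and input parts,
\[
\mathcal K_f\varphi_i(x)+(\mathcal K_G\varphi_i)(x)u .
\]

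Second, bound the drift part by \eqref{eq:drift_supereig}, which gives $\mathcal K_f\varphi_i(x)\le\lambda_i\varphi_i(x)$.

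Third, bound the input part by the hypothesis \eqref{eq:affine_control_constraint}, which gives $(\mathcal K_G\varphi_i)(x)u\le-(\lambda_i-\beta_i)\varphi_i(x)$.

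Adding the two bounds yields
\[
\mathcal K_f\varphi_i(x)+(\mathcal K_G\varphi_i)(x)u\le\lambda_i\varphi_i(x)-(\lambda_i-\beta_i)\varphi_i(x)=\beta_i\varphi_i(x),
\]
which is exactly \eqref{eq:closed_loop_rate}. Since $x$ and $i$ were arbitrary, the claim holds for all $x$ and all $i=1,\ldots,m$.

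There is essentially no obstacle here. The one point worth flagging in the write-up is that the generator of the controlled vector field $f+Gu$ (with $u$ held fixed at the evaluation point) is affine in $u$. This follows from the $C^1$ formula \eqref{eq:generator_formula}:
\[
\nabla\varphi_i^\top(f+Gu)=\mathcal K_f\varphi_i+(\mathcal K_G\varphi_i)u .
\]
This is why \eqref{eq:affine_control_constraint} is linear in $u$ and hence a QP-compatible constraint. It also explains why the condition is only sufficient: it discards the slack $\lambda_i\varphi_i-\mathcal K_f\varphi_i\ge0$. The proof needs no sign condition on $\lambda_i-\beta_i$ and never uses $\beta_i<0$; those assumptions matter only for the subsequent stability conclusion via Theorem~\ref{thm:stability}, applied to the closed loop.
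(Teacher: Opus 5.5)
Your proposal is correct and matches the paper's proof. The paper likewise adds the drift bound $\mathcal K_f\varphi_i\le\lambda_i\varphi_i$ to the affine constraint \eqref{eq:affine_control_constraint} pointwise to obtain \eqref{eq:closed_loop_rate}. Your side remarks, that the constraint is affine in $u$ and that $\beta_i<0$ is not used, are accurate.
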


\begin{proof}
The result follows immediately by combining
$\mathcal K_f\varphi_i\le\lambda_i\varphi_i$
with \eqref{eq:affine_control_constraint}.
\end{proof}

Define
\[
B(x):=
\begin{bmatrix}
(\mathcal K_G\varphi_1)(x)\\
\vdots\\
(\mathcal K_G\varphi_m)(x)
\end{bmatrix},
\qquad
D:=
\operatorname{diag}
(\lambda_1-\beta_1,\ldots,\lambda_m-\beta_m).
\]
Then \eqref{eq:affine_control_constraint} becomes
\begin{equation}
B(x)u\preceq-D\Phi(x).
\label{eq:stacked_control_constraint}
\end{equation}

\begin{remark}[Approximate D2C certificates]
\label{rem:d2c_control}
For an approximate certificate satisfying
\[
\mathcal K_f\widehat\varphi_i
\le
\lambda_i\widehat\varphi_i+\varepsilon_i,
\]
the sufficient control constraint becomes
\begin{equation}
(\mathcal K_G\widehat\varphi_i)u
\le
-(\lambda_i-\beta_i)\widehat\varphi_i-\varepsilon_i.
\label{eq:robust_d2c_control}
\end{equation}
Thus, the D2C residual enters the controller as an explicit
tightening of the affine certificate constraint. For the construction
of Theorem~\ref{thm:d2c_resolvent}, one may take
$\varepsilon_i=\varepsilon_{\mathrm{D2C},i}$.
\end{remark}

\begin{theorem}[Convex stabilizing controller]
\label{thm:stabilizing_qp}
Consider \eqref{eq:control_affine} and suppose:

\begin{enumerate}
\item[(i)] the certificates satisfy \eqref{eq:drift_supereig};

\item[(ii)] there exist $\beta_i<0$ such that
\eqref{eq:affine_control_constraint} is feasible for all
$x\in\Omega$;

\item[(iii)] there exist class-$\mathcal K_\infty$ functions
$\alpha_1,\alpha_2$ satisfying
\[
\alpha_1(\|x\|)
\le
\mathbf 1^\top\Phi(x)
\le
\alpha_2(\|x\|);
\]

\item[(iv)] the resulting closed-loop trajectories remain in $\Omega$.
\end{enumerate}

Let
\begin{equation}
u^\star(x)
\in
\arg\min_{u\in\mathcal U}
\frac12u^\top Ru
\quad
\text{s.t.}\quad
B(x)u\preceq-D\Phi(x),
\qquad R\succ0.
\label{eq:stabilizing_qp}
\end{equation}
Then the origin is globally asymptotically stable.
\end{theorem}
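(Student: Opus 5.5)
The plan is to reduce the closed-loop statement to Theorem~\ref{thm:stability}, applied to the closed-loop vector field $f_{\mathrm{cl}}(x):=f(x)+G(x)u^\star(x)$. The steps, in order, are:
\begin{enumerate}
\item[(a)] show that $u^\star$ is well defined, so that $f_{\mathrm{cl}}$ generates trajectories;
\item[(b)] show that each $\varphi_i$ is a supereigenfunction of $\mathcal K_{f_{\mathrm{cl}}}$ with a negative rate $\beta_i$;
\item[(c)] invoke the coercivity hypothesis (iii) together with the comparison principle of Theorem~\ref{thm:certificate_comparison}.
\end{enumerate}

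For (a), fix $x\in\Omega$. By hypothesis (ii) the feasible set $\{u\in\mathcal U: B(x)u\preceq -D\Phi(x)\}$ is nonempty. Assuming $\mathcal U$ is closed and convex (implicit in calling \eqref{eq:stabilizing_qp} a convex program), this set is closed and convex. The cost $\tfrac12 u^\top Ru$ with $R\succ0$ is strongly convex and coercive, so a unique minimizer $u^\star(x)$ exists. I would note that existence of closed-loop solutions requires some regularity of $x\mapsto u^\star(x)$, for instance local Lipschitz continuity, which follows from standard QP sensitivity results under a constraint qualification. I would treat this as the main technical gap: either state it as a standing assumption, or rely on (iv) together with interpreting solutions in the Carathéodory/Filippov sense. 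Hypothesis (iv) then guarantees that these trajectories stay in $\Omega$, where all the inequalities hold.

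For (b), evaluate the closed-loop generator on $\varphi_i$:
\[
(\mathcal K_{f_{\mathrm{cl}}}\varphi_i)(x)=(\mathcal K_f\varphi_i)(x)+(\mathcal K_G\varphi_i)(x)u^\star(x).
\]
Since $u^\star(x)$ is feasible, row $i$ of $B(x)u^\star(x)\preceq -D\Phi(x)$ is exactly \eqref{eq:affine_control_constraint}. By Proposition~\ref{prop:control_constraint}, combined with (i), this gives
\[
\mathcal K_{f_{\mathrm{cl}}}\varphi_i\le \lambda_i\varphi_i-(\lambda_i-\beta_i)\varphi_i=\beta_i\varphi_i, \qquad i=1,\ldots,m,
\]
with every $\beta_i<0$.

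For (c), the family $\{\varphi_i\}$ now satisfies the hypotheses of Theorem~\ref{thm:stability} for the closed-loop system, with rates $\beta_i<0$ and the coercivity bounds from (iii). Applying that theorem directly gives global asymptotic stability of the origin, together with the explicit bound
\[
\|x(t)\|\le\alpha_1^{-1}\!\left(e^{\beta_{\max}t}\alpha_2(\|x(0)\|)\right), \qquad \beta_{\max}:=\max_i\beta_i<0.
\]
One subtlety is that $u^\star(0)$ should be $0$ so that the origin is an equilibrium. This holds because $\Phi(0)=0$ by (iii), so $u=0$ is feasible at $x=0$ and is the minimizer of the cost. Apart from the regularity issue in (a), every step is immediate from the earlier results.
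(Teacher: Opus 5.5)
Your proposal is correct and follows the paper's proof: feasibility of $u^\star(x)$ in the stacked constraint gives $\mathcal K_f\varphi_i+(\mathcal K_G\varphi_i)u^\star\le\beta_i\varphi_i$, and the comparison principle together with the coercivity bounds then yields global asymptotic stability, exactly as in Theorem~\ref{thm:stability}. Your extra remarks on existence and regularity of the QP selection, and on $u^\star(0)=0$ (which needs $0\in\mathcal U$ and is in any case implied by the resulting bound), go beyond the paper, which simply takes closed-loop trajectories for granted via hypothesis (iv).
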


\begin{proof}
The QP constraints imply
\[
\mathcal K_f\varphi_i+
(\mathcal K_G\varphi_i)u^\star
\le
\beta_i\varphi_i.
\]
Hence
\[
\mathbf 1^\top\Phi(x(t))
\le
e^{\beta_{\max}t}\mathbf 1^\top\Phi(x(0)),
\qquad
\beta_{\max}:=\max_i\beta_i<0.
\]
The result follows from the coercivity bounds as in
Theorem~\ref{thm:stability}.
\end{proof}

\begin{theorem}[Geometric feasibility of certificate rate shaping]
\label{thm:vector_qp_feasibility_geometric}
Consider the control-affine system \eqref{eq:control_affine} and let
$\varphi_i:\Omega\to\mathbb R_{\ge0}$, $i=1,\ldots,m$, satisfy
\[
\mathcal K_f\varphi_i(x)\le\lambda_i\varphi_i(x).
\]
For desired rates $\beta_i<\lambda_i$, define
\[
a_i(x):=(\mathcal K_G\varphi_i)(x)\in\mathbb R^{1\times p},
\qquad
d_i(x):=(\lambda_i-\beta_i)\varphi_i(x)\ge0,
\]
and
\[
B(x):=
\begin{bmatrix}
a_1(x)\\
\vdots\\
a_m(x)
\end{bmatrix},
\qquad
d(x):=
\begin{bmatrix}
d_1(x)\\
\vdots\\
d_m(x)
\end{bmatrix}.
\]
Then the sufficient rate-shaping constraints
\begin{equation}
B(x)u\preceq-d(x)
\label{eq:rate_shaping_feasibility}
\end{equation}
are feasible at $x$ if and only if
\begin{equation}
\min_{u\in\mathcal U}
\max_{1\le i\le m}
\left\{a_i(x)u+d_i(x)\right\}
\le0.
\label{eq:minmax_feasibility}
\end{equation}

If
\[
\mathcal U=\{u\in\mathbb R^p:\|u\|_2\le u_{\max}\},
\]
a sufficient geometric condition for feasibility is the existence
of a unit vector $w(x)\in\mathbb R^p$ such that
\begin{equation}
a_i(x)w(x)
\le
-\frac{d_i(x)}{u_{\max}},
\qquad i=1,\ldots,m.
\label{eq:common_descent_direction}
\end{equation}
For a single certificate ($m=1$), feasibility is equivalent to
\begin{equation}
u_{\max}
\left\|G(x)^\top\nabla\varphi_1(x)\right\|_2
\ge
(\lambda_1-\beta_1)\varphi_1(x).
\label{eq:single_certificate_feasibility}
\end{equation}
\end{theorem}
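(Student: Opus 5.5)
The plan is to establish the three claims in turn, each by direct manipulation of the affine inequalities; the only analytic input needed is the attainment of minima.

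\emph{Min--max equivalence.} Let $h(u):=\max_{1\le i\le m}\{a_i(x)u+d_i(x)\}$. Then $B(x)u\preceq-d(x)$ holds for a given $u$ if and only if $h(u)\le0$.
\begin{enumerate}
\item[$\Rightarrow$] If some $u\in\mathcal U$ is feasible, then $\inf_{\mathcal U}h\le h(u)\le0$.
\item[$\Leftarrow$] If $\min_{\mathcal U}h\le0$, a minimizer $u^\star$ satisfies $h(u^\star)\le0$, so $u^\star$ is feasible.
\end{enumerate}
The main subtlety is whether the minimum in \eqref{eq:minmax_feasibility} is attained. The function $h$ is convex and continuous, being a finite maximum of affine functions. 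If $\mathcal U$ is compact, the minimum is attained by Weierstrass. Otherwise I interpret $\min$ as $\inf$. In that case the reverse implication needs attainment, which I would secure by assuming $\mathcal U$ closed and noting that a polyhedral convex function bounded below attains its infimum on a closed polyhedral set. Failing that, I would simply state the assumption that the minimum exists, as the statement's use of ``$\min$'' implicitly does.

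\emph{Geometric sufficient condition.} Take $\mathcal U$ to be the Euclidean ball of radius $u_{\max}$ and set $u:=u_{\max}w(x)$. Since $\|w(x)\|_2=1$, this $u$ lies in $\mathcal U$. Multiplying \eqref{eq:common_descent_direction} by $u_{\max}>0$ gives $a_i(x)u\le -d_i(x)$ for every $i$, which is exactly \eqref{eq:rate_shaping_feasibility}.

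\emph{Single certificate, $m=1$.} Write $a:=a_1(x)=\nabla\varphi_1(x)^\top G(x)$, so that $\|a\|_2=\|G(x)^\top\nabla\varphi_1(x)\|_2$. By Cauchy--Schwarz,
\[
\min_{\|u\|_2\le u_{\max}} a u=-u_{\max}\|a\|_2 .
\]
If $a\neq0$, this minimum is attained at $u=-u_{\max}a^\top/\|a\|_2$. Hence feasibility, i.e.\ $\min a u\le -d_1(x)$, is equivalent to $u_{\max}\|a\|_2\ge d_1(x)=(\lambda_1-\beta_1)\varphi_1(x)$, which is \eqref{eq:single_certificate_feasibility}.

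The degenerate case $a=0$ must be checked separately. Then $u=0$ gives $a u=0$, so the constraint holds exactly when $d_1(x)\le0$. Since $d_1(x)\ge0$, this means $d_1(x)=0$, which matches \eqref{eq:single_certificate_feasibility} with left-hand side zero. So the equivalence holds in this case too.
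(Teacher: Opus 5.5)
Your proposal is correct and follows the paper's proof essentially step for step: the pointwise equivalence $B(x)u\preceq -d(x)\Leftrightarrow \max_i\{a_i(x)u+d_i(x)\}\le 0$, the choice $u=u_{\max}w(x)$, and the Cauchy--Schwarz identity $\min_{\|u\|_2\le u_{\max}}a_1(x)u=-u_{\max}\|a_1(x)\|_2$, plus the attainment and $a_1(x)=0$ checks that the paper leaves implicit. One small correction: closedness of $\mathcal U$ alone does not guarantee the infimum is attained, since a closed, unbounded, non-polyhedral $\mathcal U$ can fail, so your polyhedral argument needs $\mathcal U$ itself to be polyhedral; your fallback of reading ``$\min$'' as asserting attainment already covers this.
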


\begin{proof}
The drift bound
$\mathcal K_f\varphi_i\le\lambda_i\varphi_i$
shows that
\[
a_i(x)u\le-d_i(x)
\]
is sufficient to enforce
\[
\mathcal K_f\varphi_i(x)
+a_i(x)u
\le
\beta_i\varphi_i(x).
\]
The stacked sufficient constraints are therefore
\eqref{eq:rate_shaping_feasibility}. For any fixed $u$, they hold
if and only if
\[
\max_i\{a_i(x)u+d_i(x)\}\le0,
\]
which yields \eqref{eq:minmax_feasibility} after minimizing over
$u\in\mathcal U$.

For the norm-bounded input set, if a unit vector $w(x)$ satisfies
\eqref{eq:common_descent_direction}, then
$u=u_{\max}w(x)$ satisfies
$B(x)u\preceq-d(x)$.

For $m=1$,
\[
\min_{\|u\|_2\le u_{\max}}a_1(x)u
=
-u_{\max}\|a_1(x)\|_2.
\]
Hence feasibility is equivalent to
$u_{\max}\|a_1(x)\|_2\ge d_1(x)$.
Using
\[
\|a_1(x)\|_2
=
\|G(x)^\top\nabla\varphi_1(x)\|_2
\]
gives \eqref{eq:single_certificate_feasibility}.\qed
\end{proof}

Theorem~\ref{thm:vector_qp_feasibility_geometric} shows that rate shaping
depends not only on the open-loop rates $\lambda_i$, but also on
the alignment of the certificate gradients with the control
distribution. In particular, if
$G(x)^\top\nabla\varphi_i(x)=0$, the input cannot instantaneously
modify the $i$th certificate. For multiple certificates, feasibility
requires sufficient control authority to provide a common descent
direction satisfying all certificate constraints. For stabilization,
the desired rates are additionally chosen such that $\beta_i<0$.

% Theorem~\ref{thm:vector_qp_feasibility_geometric} shows that feasibility is not
% determined only by the open-loop rates $\lambda_i$, but also by the alignment
% between certificate gradients and the control distribution. If
% $G(x)^\top\nabla\varphi_i(x)=0$, then the input cannot instantaneously affect
% the $i$th certificate. For underactuated systems, where $\mathrm{Range}(G(x))$
% is low-dimensional, the vector certificates must admit a common descent
% direction in input space. This observation motivates constructing
% supereigenfunctions using control-aware rollout dynamics so that their gradients are better aligned with the available control directions. This will be a topic of our future investigation and research. 

This formulation yields a convex quadratic program at each state. 
The certificates $\{\varphi_i\}$ act as a vector Lyapunov function, 
while the control input shapes their associated growth rates from 
open-loop values $\lambda_i$ to desired negative values $\beta_i$. 
Unlike classical CLF constructions, these certificates arise from 
Koopman-based operator representations and can be computed directly 
from trajectory data, providing a data-driven pathway for stabilizing 
control synthesis.

\subsection{Safety Characterization and Safe Control via Supereigenfunctions}
\label{subsec:safety}
%=============================================================================

We now use resolvent supereigenfunctions to construct dynamics-aware safety certificates. The key idea is to convert state constraints into a nonnegative \emph{risk probe} and to aggregate this risk along trajectories. The resulting certificate measures future risk-to-go rather than instantaneous constraint violation, and can be used both for safety characterization and convex safe-control synthesis.

%-----------------------------------------------------------------------------
\subsubsection{Risk probes and resolvent safety certificates}
%-----------------------------------------------------------------------------

Let the safe set, $\cal S$, and the unsafe set $\cal U$ be defined as 
\begin{equation}
\mathcal S:=\{x\in\Omega:\ g_i(x)\le 0,\ i=1,\dots,q\},
\qquad
\mathcal U:=\Omega\setminus\mathcal S,
\label{eq:safe_unsafe_sets}
\end{equation}
where \(g_i:\Omega\to\mathbb R\) are continuous constraint functions.  
Define a nonnegative risk map
\begin{equation}
r(x):=\sum_{i=1}^q \rho\!\big(g_i(x)\big),
\label{eq:risk_map_general}
\end{equation}
where \(\rho:\mathbb R\to\mathbb R_{\ge 0}\) is nonnegative and typically monotone increasing. Typical choices include
\begin{align}
\rho(s)&=\mathbf 1_{\{s>0\}} ,\label{eq:rho_indicator}\\
\rho(s)&=\max\{0,s\}, \label{eq:rho_hinge}\\
\rho(s)&=\log(1+e^{\kappa s}),\quad \kappa>0, \label{eq:rho_softplus}\\
\rho(s)&=e^{\kappa s},\quad \kappa>0. \label{eq:rho_exp}
\end{align}
The choice of $\rho$ determines the type of safety information
encoded by the certificate. The indicator probe is zero on the safe
set and uniformly positive on the unsafe set, but is discontinuous.
The hinge probe is continuous and zero on the safe set, but its value
approaches zero near the safety boundary. Softplus and exponential
probes provide smooth graded measures of risk but remain strictly
positive on both sides of the boundary. These distinctions determine
whether the resulting certificate supports occupation-time, hard
safety, or graded-risk interpretations.
% The first two give hard separation of safe and unsafe regions, while the latter two encode graded risk. The functions \eqref{eq:rho_indicator}--\eqref{eq:rho_exp} vanish (or remain small) on the safe set and become positive in the unsafe set, with the transition across the boundary ranging from discontinuous (indicator) to smooth (softplus and exponential).
For a given discount \(\lambda\ge 0\), define the infinite-horizon resolvent certificate
\begin{equation}
\varphi_\lambda(x):=
\int_0^\infty e^{-\lambda t}\,r(s_t(x))\,dt,
\label{eq:resolvent_risk_infinite}
\end{equation}
whenever the integral converges. For a finite horizon \(T>0\), 
\begin{equation}
\varphi_{\lambda,T}(x):=
\int_0^T e^{-\lambda t}\,r(s_t(x))\,dt.
\label{eq:resolvent_risk_finite}
\end{equation}

% \begin{proposition}[Resolvent safety inequality]
% \label{prop:resolvent_safety_ineq}
% Assume \(\varphi_\lambda\in C^1(\Omega)\) and \eqref{eq:resolvent_risk_infinite} is finite. Then
% \begin{equation}
% (\lambda I-\mathcal K_f)\varphi_\lambda=r,
% \qquad\text{hence}\qquad
% \mathcal K_f\varphi_\lambda=\lambda\varphi_\lambda-r\le \lambda\varphi_\lambda.
% \label{eq:resolvent_safety_generator}
% \end{equation}
% \end{proposition}

Thus \(\varphi_\lambda\) is a supereigenfunction of the drift dynamics and can be interpreted as a discounted risk-to-go. The parameter \(\lambda\) plays three roles: it ensures convergence, controls the time horizon of the certificate, and interpolates between long-horizon exposure (\(\lambda\) small) and near-term risk (\(\lambda\) large).

%-----------------------------------------------------------------------------
\subsubsection{Safety interpretation}
%-----------------------------------------------------------------------------

For any \(\lambda\ge 0\) and \(T\in(0,\infty]\), the sublevel set
\begin{equation}
\mathcal S_{\lambda,T}(c):=\{x\in\Omega:\ \varphi_{\lambda,T}(x)\le c\}
\label{eq:soft_safe_set}
\end{equation}
defines a dynamics-aware safe set: the future accumulated risk along the open-loop trajectory is at most \(c\). This yields a natural notion of \emph{soft safety}. To convert this certificate into quantitative violation bounds, we make following assumption.

\begin{assumption}[Unsafe-set lower bound]
\label{ass:unsafe_lower_bound}
There exists \(\underline r>0\) such that \(r(x)\ge \underline r\) for all \(x\in\mathcal U\).
\end{assumption}
\begin{theorem}[Discounted unsafe-occupation bound]
\label{thm:unsafe_occupation}
Under Assumption~\ref{ass:unsafe_lower_bound}, for any \(x\in \mathcal S_{\lambda,T}(c)\),
the discounted time spent in the unsafe set \(\mathcal U\) over the horizon \(T\) satisfies
\begin{align}
\int_0^T e^{-\lambda t}\,
\mathbf 1_{\mathcal U}(s_t(x))\,dt
\le
\frac{c}{\underline r}.\label{bound}
\end{align}
\end{theorem}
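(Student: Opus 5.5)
The argument is a pointwise comparison of integrands. On the unsafe set $\mathcal U$ the indicator is dominated by the normalized risk, and elsewhere it vanishes. We then integrate against the nonnegative discount weight and use the sublevel-set condition defining $\mathcal S_{\lambda,T}(c)$.

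\textbf{Pointwise bound.} I will first show that
\[
\mathbf 1_{\mathcal U}(y)\le \frac{r(y)}{\underline r}\qquad \forall y\in\Omega .
\]
If $y\in\mathcal U$, Assumption~\ref{ass:unsafe_lower_bound} gives $r(y)/\underline r\ge 1=\mathbf 1_{\mathcal U}(y)$. If $y\in\mathcal S$, the left side is $0$, and the right side is nonnegative because $\rho\ge 0$ makes $r\ge 0$ by \eqref{eq:risk_map_general}.

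\textbf{Integration.} Evaluate the pointwise bound at $y=s_t(x)$, multiply by $e^{-\lambda t}\ge 0$, and integrate over $[0,T]$:
\[
\int_0^T e^{-\lambda t}\mathbf 1_{\mathcal U}(s_t(x))\,dt\le \frac{1}{\underline r}\int_0^T e^{-\lambda t} r(s_t(x))\,dt=\frac{\varphi_{\lambda,T}(x)}{\underline r}.
\]
Membership $x\in\mathcal S_{\lambda,T}(c)$ means $\varphi_{\lambda,T}(x)\le c$, which yields \eqref{bound}. For $T=\infty$ the same computation applies with $\varphi_\lambda$ from \eqref{eq:resolvent_risk_infinite}, since the sublevel set only contains points where this integral is finite.

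\textbf{Main obstacle.} The only delicate point is measurability and integrability, so that both integrals are well defined. The flow $t\mapsto s_t(x)$ is continuous and $\mathcal U$ is open, because the $g_i$ are continuous; hence $t\mapsto\mathbf 1_{\mathcal U}(s_t(x))$ is Borel measurable. The function $t\mapsto r(s_t(x))$ is a nonnegative measurable function, including in the discontinuous indicator case $\rho=\mathbf 1_{\{s>0\}}$. Monotonicity of the Lebesgue integral for nonnegative functions then justifies the integration step without any further regularity of $\varphi_{\lambda,T}$.
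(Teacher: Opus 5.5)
Your proposal is correct and matches the paper's proof: both use the pointwise domination $r\ge \underline r\,\mathbf 1_{\mathcal U}$ (which relies on $r\ge 0$ on $\mathcal S$), integrate it against $e^{-\lambda t}$ over $[0,T]$, and then invoke $\varphi_{\lambda,T}(x)\le c$. Your measurability remarks and the $T=\infty$ case are sound additions that the paper leaves implicit.
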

% \begin{theorem}[Discounted unsafe-occupation bound]
% \label{thm:unsafe_occupation}
% Under Assumption~\ref{ass:unsafe_lower_bound}, for any \(T\in(0,\infty]\) and \(\lambda\ge 0\),
% \begin{equation}
% \int_0^T e^{-\lambda t}\,\mathbf 1_{\mathcal U}(s_t(x))\,dt
% \le
% \frac{1}{\underline r}\,\varphi_{\lambda,T}(x),
% \qquad \forall x\in\Omega.
% \label{eq:unsafe_occ_bound}
% \end{equation}
% In particular, if \(x\in\mathcal S_{\lambda,T}(c)\), then the discounted time spent in \(\mathcal U\) up to time \(T\) is at most \(c/\underline r\).
% \end{theorem}
\begin{proof}
By Assumption~\ref{ass:unsafe_lower_bound}, $
r(x)\ge \underline r\,\mathbf 1_{\mathcal U}(x).
$
Therefore,
\[
\varphi_{\lambda,T}(x)
=
\int_0^T e^{-\lambda t}r(s_t(x))\,dt
\ge
\underline r
\int_0^T e^{-\lambda t}
\mathbf 1_{\mathcal U}(s_t(x))\,dt.
\]
If \(x\in\mathcal S_{\lambda,T}(c)\), then $
\varphi_{\lambda,T}(x)\le c,
$
which gives (\ref{bound}).\qed
\end{proof}
% \begin{proof}
% Since \(r\ge \underline r\,\mathbf 1_{\mathcal U}\), multiply by \(e^{-\lambda t}\) and integrate over \([0,T]\).
% \end{proof}

For finite-horizon hard safety, assume additionally:

\begin{assumption}[Zero-safe positive-unsafe probe]
\label{ass:zero_safe_positive_unsafe}
The probe \(r:\Omega\to\mathbb R_{\ge 0}\) is continuous and satisfies
\[
r(x)=0 \ \text{for all } x\in\mathcal S,
\qquad
r(x)>0 \ \text{for all } x\in\mathcal U.
\]
\end{assumption}
\begin{theorem}[Finite-horizon hard safety]
\label{thm:finite_hard_safety}
Under Assumption~\ref{ass:zero_safe_positive_unsafe}, for any finite \(T>0\) and any \(\lambda\ge 0\),
\[
\varphi_{\lambda,T}(x)=0
\quad\Longleftrightarrow\quad
s_t(x)\in\mathcal S,\ \forall t\in[0,T].
\]
\end{theorem}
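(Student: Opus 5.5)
The plan is to prove the two implications separately. The key fact is that the integrand $h(t):=e^{-\lambda t}r(s_t(x))$ is nonnegative and continuous on $[0,T]$. Nonnegativity follows from Assumption~\ref{ass:zero_safe_positive_unsafe}, since $r\ge 0$. Continuity holds because $r$ is continuous by that same assumption, and $t\mapsto s_t(x)$ is continuous: it is a solution of $\dot x=f(x)$ with $f\in C^1$, and forward completeness is assumed in Section~\ref{sec:prelim}. The weight $e^{-\lambda t}$ is continuous and strictly positive for every $\lambda\ge0$. In particular $\varphi_{\lambda,T}(x)$ is a well-defined finite Riemann integral, since $T<\infty$.

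For ($\Leftarrow$), suppose $s_t(x)\in\mathcal S$ for all $t\in[0,T]$. Assumption~\ref{ass:zero_safe_positive_unsafe} then gives $r(s_t(x))=0$ on $[0,T]$. Hence $h\equiv0$ there, and $\varphi_{\lambda,T}(x)=0$ by \eqref{eq:resolvent_risk_finite}.

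For ($\Rightarrow$), I will argue by contrapositive. Suppose there is $t_0\in[0,T]$ with $s_{t_0}(x)\in\mathcal U$. Then $r(s_{t_0}(x))>0$, so $h(t_0)=e^{-\lambda t_0}r(s_{t_0}(x))>0$. By continuity of $h$, there is a relatively open interval $I\subset[0,T]$ containing $t_0$, of positive length $|I|$, on which $h(t)\ge h(t_0)/2>0$. This covers the endpoint cases $t_0=0$ and $t_0=T$, since the interval can be one-sided. Because $h\ge0$ everywhere on $[0,T]$, we get $\varphi_{\lambda,T}(x)\ge\int_I h\,dt\ge |I|\,h(t_0)/2>0$. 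This contradicts $\varphi_{\lambda,T}(x)=0$, so $s_t(x)\in\mathcal S$ for all $t\in[0,T]$.

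The only genuinely delicate step is the continuity of $t\mapsto r(s_t(x))$. This is exactly why the continuous-probe assumption is needed: with the indicator probe \eqref{eq:rho_indicator}, a trajectory could touch $\mathcal U$ on a measure-zero set of times without contributing to the integral. It is also why the statement is restricted to finite $T$, where the integral is a proper integral of a continuous function, so no convergence issues arise. One more detail to note explicitly is that $e^{-\lambda t}>0$, so discounting can never cancel a positive risk value. I expect no further obstacles.
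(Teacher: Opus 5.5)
Your proof is correct and follows essentially the same route as the paper. The reverse direction is immediate from $r=0$ on $\mathcal S$, and the forward direction rests on the nonnegativity and continuity of $t\mapsto e^{-\lambda t}r(s_t(x))$; the only cosmetic difference is that you argue by contrapositive with the explicit lower bound $|I|\,h(t_0)/2$, whereas the paper first deduces $r(s_t(x))=0$ for almost every $t$ and then upgrades this to every $t$ by continuity.
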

\begin{proof}
If \(s_t(x)\in\mathcal S\) for all \(t\in[0,T]\), then
\(r(s_t(x))=0\) for all \(t\in[0,T]\). Hence
\[
\varphi_{\lambda,T}(x)
=
\int_0^T e^{-\lambda t} r(s_t(x))\,dt
=0.
\]

Conversely, suppose \(\varphi_{\lambda,T}(x)=0\). Since
\(e^{-\lambda t}r(s_t(x))\ge 0\), it follows that
\[
r(s_t(x))=0
\quad \text{for a.e. } t\in[0,T].
\]
Because \(r\) and \(s_t(x)\) are continuous, the map
\(t\mapsto r(s_t(x))\) is continuous. Therefore, if it were positive at any
time \(t_0\in[0,T]\), it would be positive on a neighborhood of \(t_0\),
contradicting the fact that it is zero almost everywhere. Hence
\[
r(s_t(x))=0
\quad \forall t\in[0,T].
\]
By the assumption that \(r(x)=0\) on \(\mathcal S\) and \(r(x)>0\) on
\(\mathcal U\), this implies
$
s_t(x)\in\mathcal S,
\qquad \forall t\in[0,T].
$\qed
\end{proof}

% \begin{assumption}[Hard-separation probe]
% \label{ass:hard_separation}
% The probe satisfies \(r(x)=0\) for all \(x\in\mathcal S\) and \(r(x)\ge \underline r>0\) for all \(x\in\mathcal U\).
% \end{assumption}

% \begin{theorem}[Finite-horizon hard safety]
% \label{thm:finite_hard_safety}
% Under Assumption~\ref{ass:hard_separation}, for any finite \(T>0\) and any \(\lambda\ge 0\),
% \begin{equation}
% \varphi_{\lambda,T}(x)=0
% \quad\Longleftrightarrow\quad
% s_t(x)\in\mathcal S,\ \forall t\in[0,T].
% \label{eq:hard_finite_equiv}
% \end{equation}
% \end{theorem}

% \begin{proof}
% If \(s_t(x)\in\mathcal S\) for all \(t\in[0,T]\), then \(r(s_t(x))\equiv 0\), so \(\varphi_{\lambda,T}(x)=0\). Conversely, if \(\varphi_{\lambda,T}(x)=0\), then \(r(s_t(x))=0\) almost everywhere, hence by continuity \(s_t(x)\in\mathcal S\) for all \(t\in[0,T]\).
% \end{proof}

Thus, depending on the probe \(r\), the certificate \(\varphi_{\lambda,T}\) supports either soft safety, occupation-time bounds, or finite-horizon hard safety.

%-----------------------------------------------------------------------------
\subsubsection{Safe control synthesis}
%-----------------------------------------------------------------------------

We now turn these drift-based safety certificates into convex feedback constraints. Consider the control-affine system
\begin{equation}
\dot x=f(x)+G(x)u,
\qquad x\in\Omega\subset\mathbb R^n,\ \ u\in\mathbb R^p.
\label{eq:safety_control_system}
\end{equation}

To enforce safety, it is not necessary to render all sublevel sets of
\(\varphi_\lambda\) invariant. Instead,
we select a safety budget \(c>0\) and enforce invariance only of the chosen
risk sublevel set
\[
\mathcal S_\lambda(c)
:=
\{x\in\Omega:\ \varphi_\lambda(x)\le c\}.
\]
Equivalently, define a function
$
h_\lambda(x):=c-\varphi_\lambda(x),
$
so that
\[
\mathcal S_\lambda(c)=\{x:\ h_\lambda(x)\ge0\}.
\]
For a certificate \(\varphi_\lambda\) constructed from the resolvent formula
\eqref{eq:resolvent_risk_infinite}, 
Theorem~\ref{thm:d2c_resolvent}
gives
\begin{equation}
({\mathcal K}_f\varphi_\lambda)(x)
=
\lambda\varphi_\lambda(x)-r(x).
\label{eq:safety_resolvent_id}
\end{equation}
Hence, under the controlled dynamics
\eqref{eq:safety_control_system},
\begin{equation}
\dot\varphi_\lambda(x)
=
\lambda\varphi_\lambda(x)-r(x)
+
({\mathcal K}_G\varphi_\lambda)(x)\,u.
\label{eq:safety_phi_dot_control}
\end{equation}

A sufficient condition for forward invariance of
\(\mathcal S_\lambda(c)\) is
\begin{equation}
\dot h_\lambda(x)\ge -\alpha h_\lambda(x),
\qquad \alpha>0,
\label{eq:cbf_condition_phi}
\end{equation}
or, equivalently,
\begin{equation}
\dot\varphi_\lambda(x)
\le
\alpha\big(c-\varphi_\lambda(x)\big).
\label{eq:soft_barrier_condition}
\end{equation}
Substituting \eqref{eq:safety_phi_dot_control} into
\eqref{eq:soft_barrier_condition} yields the affine constraint
\begin{equation}
({\mathcal K}_G\varphi_\lambda)(x)\,u
\le
\alpha\big(c-\varphi_\lambda(x)\big)
-\lambda\varphi_\lambda(x)
+r(x).
\label{eq:soft_barrier_affine}
\end{equation}

\begin{remark}[Robust safety with approximate D2C certificates]
For a finite-rollout or data-driven certificate satisfying
\[
\mathcal K_f\widehat\phi_\lambda
\le
\lambda\widehat\phi_\lambda-r+\varepsilon,
\]
forward invariance of the selected sublevel set
$\{\widehat\phi_\lambda\le c\}$ is ensured by the tightened constraint
\[
(\mathcal K_G\widehat\phi_\lambda)u
\le
\alpha(c-\widehat\phi_\lambda)
-\lambda\widehat\phi_\lambda+r-\varepsilon.
\]
For the D2C construction of Theorem~\ref{thm:d2c_resolvent},
$\varepsilon$ is determined explicitly by the finite-rollout and
certificate approximation errors.
\end{remark}
This formulation applies to both hard and graded-risk probe functions.
In particular, if $r=r_{\mathrm{hard}}$ satisfies
\begin{equation}
r_{\mathrm{hard}}(x)=0 \ \text{on}\ \mathcal S,
\qquad
r_{\mathrm{hard}}(x)>0 \ \text{on}\ \mathcal U,
\label{eq:hard_probe_sep}
\end{equation}
then the resulting certificate admits the hard-safety interpretation
of Theorem~\ref{thm:finite_hard_safety}, whereas smooth positive probes provide
graded measures of future risk. Under feedback, the affine constraint
above renders the selected risk sublevel set
$\{\widehat\varphi_{\lambda}\le c\}$ forward invariant; this implies
hard safety with respect to $\mathcal S$ whenever the selected sublevel
set is contained in $\mathcal S$.

For one or more certificates $\widehat\varphi^i_{\lambda_i}$ satisfying
\[
\mathcal K_f\widehat\varphi^i_{\lambda_i}
\le
\lambda_i\widehat\varphi^i_{\lambda_i}-r_i+\varepsilon_i,
\]
the corresponding tightened affine constraints can be stacked and
enforced through the convex QP
\begin{equation}
\begin{aligned}
u^\star(x)\in\arg\min_{u\in\mathbb R^p}\quad &
\frac12\,(u-u_{\mathrm{nom}}(x))^\top
R\,(u-u_{\mathrm{nom}}(x)) \\
\text{s.t.}\quad &
A_{\mathrm{safe}}(x)u
\preceq b_{\mathrm{safe}}(x),\\
&u\in\mathcal U,
\end{aligned}
\label{eq:safety_qp_two_phi}
\end{equation}
where $R\succ0$, $u_{\mathrm{nom}}$ is a nominal controller, and the
$i$th row of the constraint is
\begin{equation}
(\mathcal K_G\widehat\varphi^i_{\lambda_i})(x)u
\le
\alpha_i\!\left(c_i-\widehat\varphi^i_{\lambda_i}(x)\right)
-\lambda_i\widehat\varphi^i_{\lambda_i}(x)
+r_i(x)-\varepsilon_i.
\label{eq:safe_constraint_approx}
\end{equation}
For exact resolvent certificates, $\varepsilon_i=0$; for finite-rollout
or data-driven certificates, $\varepsilon_i$ accounts for the
corresponding D2C residual.
% \begin{theorem}[Forward invariance of certificate sublevel sets] \label{thm:safety_invariance_qp} Assume the QP \eqref{eq:safety_qp_two_phi} is feasible for all \(x\) in the operating region and \(u^\star(x)\) is any measurable selection. Then every sublevel set enforced through a constraint of the form \eqref{eq:soft_barrier_affine} is forward invariant under the closed-loop dynamics. In particular, intersections of such sublevel sets are also forward invariant. \end{theorem} \begin{proof} Each enforced inequality implies the corresponding barrier differential inequality \eqref{eq:soft_barrier_condition}. Standard comparison arguments yield forward invariance of the associated sublevel sets, and intersections preserve invariance. \end{proof}

\begin{theorem}[Forward invariance of selected risk sublevel sets]
\label{thm:safety_invariance_qp}
Let \(\varphi^i_{\lambda_i}:\Omega\to\mathbb R_{\ge 0}\), 
\(i=1,\ldots,N_\phi\), be continuously differentiable safety certificates
constructed from the resolvent formula, with associated risk probes \(r_i\),
discounts \(\lambda_i\), and safety budgets \(c_i>0\). Define
\[
\mathcal S_i(c_i)
:=
\{x\in\Omega:\ \varphi^i_{\lambda_i}(x)\le c_i\}.
\]
Assume that the QP \eqref{eq:safety_qp_two_phi} is feasible for all \(x\)
in the operating region and that \(u^\star(x)\) is a measurable selection
of its solution. Suppose that, for each enforced certificate, the selected
control satisfies for $\alpha_i>0$
\begin{equation}
({\mathcal K}_G\varphi^i_{\lambda_i})(x)\,u^\star(x)
\le
\alpha_i\big(c_i-\varphi^i_{\lambda_i}(x)\big)
-\lambda_i\varphi^i_{\lambda_i}(x)
+r_i(x).
\label{eq:safety_affine_condition_i}
\end{equation}
Then each selected risk sublevel set \(\mathcal S_i(c_i)\) is forward
invariant under the closed-loop dynamics. Consequently, any finite
intersection
$
\mathcal S
=
\bigcap_{i=1}^{N_\phi}\mathcal S_i(c_i)
$
of enforced risk sublevel sets is forward invariant.
\end{theorem}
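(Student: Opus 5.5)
The plan is to reduce the statement to a scalar barrier-type differential inequality for each certificate and then apply a comparison argument. Fix $i$ and a closed-loop trajectory $x(t)$ of $\dot x=f(x)+G(x)u^\star(x)$ with $x(0)\in\mathcal S_i(c_i)$. Set $y(t):=\varphi^i_{\lambda_i}(x(t))$. Because $\varphi^i_{\lambda_i}$ is $C^1$, the chain rule gives
\[
\dot y(t)=(\mathcal K_f\varphi^i_{\lambda_i})(x(t))+(\mathcal K_G\varphi^i_{\lambda_i})(x(t))\,u^\star(x(t))
\]
for almost every $t$. This holds for a.e. $t$ only, since $u^\star$ is merely measurable and trajectories are understood in the Carathéodory sense.

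Next, substitute the resolvent identity \eqref{eq:safety_resolvent_id}, $\mathcal K_f\varphi^i_{\lambda_i}=\lambda_i\varphi^i_{\lambda_i}-r_i$, which follows from Theorem~\ref{thm:d2c_resolvent}(i). Combined with \eqref{eq:safety_affine_condition_i}, the terms $\lambda_i\varphi^i_{\lambda_i}$ and $r_i$ cancel, leaving
\[
\dot y(t)\le \alpha_i\bigl(c_i-y(t)\bigr)\qquad\text{a.e. }t.
\]
Equivalently, with $h(t):=c_i-y(t)$, we have $\dot h\ge-\alpha_i h$. This is exactly the barrier condition \eqref{eq:soft_barrier_condition}.

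The comparison step comes next. Because $y$ is absolutely continuous, $\frac{d}{dt}\bigl(e^{\alpha_i t}h(t)\bigr)=e^{\alpha_i t}(\dot h+\alpha_i h)\ge0$ a.e. Hence $e^{\alpha_i t}h(t)\ge h(0)\ge0$, so $h(t)\ge e^{-\alpha_i t}h(0)\ge0$ for all $t$ in the existence interval. This gives $x(t)\in\mathcal S_i(c_i)$, which is forward invariance. Using the integrating factor avoids any Nagumo-type tangency argument at the boundary.

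For the intersection, take $x(0)\in\bigcap_i\mathcal S_i(c_i)$. The same trajectory satisfies every enforced constraint simultaneously, because $u^\star$ is feasible for all stacked rows. Applying the argument above for each $i$ places $x(t)$ in every $\mathcal S_i(c_i)$, and therefore in their intersection.

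I expect the main obstacle to be regularity and well-posedness rather than the inequality itself. First, the closed-loop solution must exist and stay in the operating region where feasibility is assumed. I would state this as implicit in ``operating region'', or note that local Lipschitzness of $u^\star$ (as for QP solutions under constraint qualifications) gives classical solutions. Second, the resolvent identity must hold pointwise, which requires $\varphi^i_{\lambda_i}\in C^1$ together with convergence of the integral. I would justify it pointwise along trajectories using $\varphi(s_\tau(x))=e^{\lambda\tau}\bigl(\varphi(x)-\int_0^\tau e^{-\lambda t}r(s_t(x))\,dt\bigr)$ and differentiating at $\tau=0$.
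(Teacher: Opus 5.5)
Your proof is correct and follows the paper's argument step for step: the chain rule along the closed loop, substitution of the resolvent identity $\mathcal K_f\varphi^i_{\lambda_i}=\lambda_i\varphi^i_{\lambda_i}-r_i$ so that the affine condition reduces to $\dot h_i\ge-\alpha_i h_i$, the comparison lemma (your integrating factor), and then repetition for each $i$ to handle the intersection. Your extra care about Carath\'eodory solutions and the pointwise validity of the resolvent identity goes beyond what the paper does; the pointwise derivation also needs $t\mapsto r_i(s_t(x))$ to be continuous at $t=0$, which holds for continuous probes.
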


\begin{proof}
Fix \(i\in\{1,\ldots,N_\phi\}\) and define $
h_i(x):=c_i-\varphi^i_{\lambda_i}(x).
$
By the resolvent identity,
\[
({\mathcal K}_f\varphi^i_{\lambda_i})(x)
=
\lambda_i\varphi^i_{\lambda_i}(x)-r_i(x).
\]
Along the closed-loop dynamics,
\[
\dot\varphi^i_{\lambda_i}(x)
=
\lambda_i\varphi^i_{\lambda_i}(x)-r_i(x)
+
({\mathcal K}_G\varphi^i_{\lambda_i})(x)u^\star(x).
\]
Using \eqref{eq:safety_affine_condition_i}, we obtain
$
\dot\varphi^i_{\lambda_i}(x)
\le
\alpha_i\big(c_i-\varphi^i_{\lambda_i}(x)\big).
$
Since \(h_i=c_i-\varphi^i_{\lambda_i}\), this is equivalent to
$
\dot h_i(x)\ge -\alpha_i h_i(x).
$
By the comparison lemma,
\[
h_i(x(t))\ge e^{-\alpha_i t}h_i(x(0)).
\]
Thus, if \(x(0)\in\mathcal S_i(c_i)\), then \(h_i(x(0))\ge0\), and hence
\(h_i(x(t))\ge0\) for all \(t\ge0\). Therefore
$
\varphi^i_{\lambda_i}(x(t))\le c_i,
$
so \(\mathcal S_i(c_i)\) is forward invariant.
The same argument holds for every certificate under the same
closed-loop control, any finite intersection of such invariant sets is also
forward invariant.\qed
\end{proof}
This construction is closely related in form to CBF-QP methods, but the certificate \(\varphi_\lambda\) is constructed from the drift dynamics and therefore encodes accumulated future risk rather than an instantaneous geometric barrier.

\subsection{Contraction and Incremental Stability via Directional Supereigenfunctions}
\label{subsec:contraction}
%=============================================================================

We now interpret the directional supereigenfunctions constructed in the previous section through the lens of contraction and incremental stability. While state-space supereigenfunctions yield Lyapunov-type certificates for convergence to equilibria, tangent-space supereigenfunctions naturally characterize \emph{convergence between trajectories}.

Consider two trajectories $x(t)=s_t(x_0)$ and $\tilde{x}(t)=s_t(\tilde{x}_0)$, and define the differential displacement $v(t)=\tilde{x}(t)-x(t)$ governed locally by the variational dynamics
\begin{equation}
\dot v = Df(x)v.
\end{equation}
A directional supereigenfunction $\varphi(x,v)$ provides an upper bound on the growth of such displacements. In particular, for a quadratic observable of the form $
\varphi_M(x,v)=v^\top M(x)v,
$, where the matrix $M(x)$ can be obtained using the MET or resolvent-based scheme as given in Eq. (\ref{eq:met_phi_iv}) and (\ref{eq:gramian_metric_family}) respectively. The supereigenfunction condition
\begin{equation}
\dot \varphi_M \le \lambda \varphi_M
\label{eq:contraction_supereig_short}
\end{equation}
implies
\[
\varphi_M(x(t),v(t)) \le e^{\lambda t}\varphi_M(x_0,v_0).
\]
When $\lambda<0$, this yields exponential convergence of nearby trajectories, recovering the classical notion of contraction. Thus, contraction metrics arise as a special case of directional supereigenfunctions. The constructions process developed in Section \ref{subsec:met_directional}  provide two complementary ways of obtaining such certificates. The MET-based construction yields directional observables
\[
\varphi_i(x,v)=|P_i(x)v|^2,
\]
which characterize anisotropic growth along Oseledets subspaces, with rates given by twice the Lyapunov exponents. These provide intrinsic, data-driven estimates of expansion and contraction directions. In contrast, the Gramian-based construction produces a state-dependent metric $M(x)$ through trajectory integrals of the variational dynamics. The associated observable $\varphi_M(x,v)$ satisfies a supereigenfunction inequality of the form \eqref{eq:contraction_supereig_short}, yielding a finite-time or discounted certificate of incremental stability. Importantly, this construction is computable directly from data and remains well-defined even in regimes where asymptotic quantities such as Lyapunov exponents are difficult to estimate.

% \medskip

% From this perspective, directional supereigenfunctions provide a unified framework for incremental analysis. The MET-based constructions capture asymptotic growth rates, while Gramian-based constructions yield computable finite-time contraction metrics. Both lead naturally to inequality constraints that can be enforced for control design, enabling convex synthesis of stabilizing,  and contracting controllers.

%-----------------------------------------------------------------------------
\subsubsection{Contraction-based control design}
%-----------------------------------------------------------------------------
%=============================================================================

We now extend the certificate-based control design of the previous section to
the incremental (contraction) setting. Instead of state-space certificates, we
consider directional supereigenfunctions defined on the tangent bundle. Consider the control-affine system $\dot x=f(x)+G(x)u$ and the associated
variational dynamics
\[
\dot v = \left(Df(x) + D G(x)u\right)v.
\]
Let $\varphi(x,v)=v^\top M(x)v$ be a  supereigenfunction constructed
via the Gramian or MET-based approaches described earlier. Let $\Psi(x,v)$ be the vector of supereigenfunctions on the tangent bundle. A contraction condition is obtained by enforcing
\begin{equation}
\dot \Psi(x,v)
\preceq B \Psi(x,v),
\;\;\; B={\rm diag}(\beta_i),\;\;\beta_i<0
\label{eq:contraction_control_condition}
\end{equation}
for $i=1,\dots,m$ which ensures exponential decay of differential displacements and hence
incremental stability.
Proceeding as in the stabilization case, the derivative of $\varphi$ along 
the tangent dynamics can be written as
\[
\dot \Psi(x,v)
=
({\mathcal K}_f^{\tan} \Psi)(x,v) + ({\mathcal K}^{\tan}_{G} \Psi)(x,v)u,
\]
% \[
% ({\mathcal K}^{\tan}_{Gu}\Psi)(x,v)
% :=
% ({\mathcal K}^{\tan}_{G}\Psi)(x,v)\,u.
% \]
where,
\[
({\mathcal K}^{\tan}_{G}\Psi)(x,v)
=
\begin{bmatrix}
({\mathcal K}^{\tan}_{G}\psi_1)(x,v)\\
\vdots\\
({\mathcal K}^{\tan}_{G}\psi_m)(x,v)
\end{bmatrix},
\]
which is affine in $u$. Assume the open-loop growth satisfies $
({\mathcal K}_f^{\tan} \Psi)(x,v)\preceq \Lambda\,\Psi(x,v)$ with $\Lambda={\rm diag}(\lambda_i)$.
% for some constant $\lambda\in\mathbb R$. The Lie derivative, ${\mathcal L}_f^\Delta$ of $\varphi$ along these dynamics is defined as
% \begin{equation}
% \mathcal L_f^\Delta \varphi(x,v)
% :=
% \nabla_x \varphi(x,v)^\top f(x)
% +
% \nabla_v \varphi(x,v)^\top Df(x)v.
% \label{eq}
% \end{equation}
A sufficient condition for \eqref{eq:contraction_control_condition} is
\begin{equation}
({\mathcal K}_{G}^{\tan} \Psi )(x,v) u
\preceq
-(\Lambda-B)\,\Psi(x,v),
\label{eq:contraction_affine}
\end{equation}
with $\beta_i<0$. This yields the pointwise convex quadratic program
\begin{equation}
\begin{aligned}
u^\star(x)\in&\arg\min_u\quad  \tfrac12 u^\top R u \\
\text{s.t.}\quad &
({\mathcal K}_{G}^{\tan} \Psi)(x,v)u \preceq-(\Lambda-B)\,\Psi(x,v), 
\; u\in\mathcal U,
\end{aligned}
\label{eq:qp_contraction}
\end{equation}
which enforces contraction of the directional certificate.
\medskip

This formulation can be interpreted as shaping the differential growth rate of
the system from its open-loop value $\Lambda$ to a desired negative rate $B$,
thereby enforcing synchronization and tracking between trajectories.
Compared to classical contraction metrics, the proposed framework provides a
data-driven pathway for constructing and enforcing contraction certificates
via Koopman supereigenfunctions.

%=============================================================================

\section{Computational Construction of Directional Supereigenfunctions}
\label{sec:computation}
%%%%%%%%%%%%%%%%%%%%%%%%%%%%%%%%%%%%%%%%%%%%%%%%%%%%%%%%%%%%%%%%%%%%%%%%%%%%%%

We summarize data-driven procedures for constructing directional 
supereigenfunctions from trajectory samples, avoiding high-dimensional PDEs. 
We present three complementary approaches: 1) MET/QR: intrinsic directional structure from tangent data; 2) Gramian: tangent-space supereigenfunctions from trajectory sensitivities; 3) Resolvent: state-space certificates from observable rollouts.

%=============================================================================
\subsection{MET/QR: intrinsic directional certificates}
\label{subsec:met_qr}
%=============================================================================

The MET implies that along typical trajectories there exist Lyapunov exponents 
$\chi_i$ and associated invariant directions. These can be estimated via the 
incremental QR (Benettin) iteration applied to the tangent cocycle. Given a trajectory $\{x_k\}_{k=0}^{N}$ with step $\Delta t$, and tangent maps
\[
A_k := Ds_{\Delta t}(x_k),
\]
perform
\begin{equation}
A_k Q_k = Q_{k+1}R_k,
\label{eq:qr_incremental_comp}
\end{equation}
with $Q_k$ orthonormal and $R_k$ upper triangular. Then
\begin{equation}
\widehat{\chi}_i
=
\frac{1}{N\Delta t}\sum_{k=0}^{N-1}\log (R_k)_{ii},
\qquad
\widehat{\lambda}_i = 2\widehat{\chi}_i.
\label{eq:chi_hat_comp}
\end{equation}

The columns $q_i(x_k)$ approximate Oseledets directions, yielding projectors $
P_i(x_k)=q_i(x_k)q_i(x_k)^\top$.
Define the directional observable
\[
\varphi_i(x_k,v_k) = (q_i(x_k)^\top v_k)^2,
\]
where $v_k$ evolves according to the tangent dynamics
$
v_{k+1} = A_k v_k
$.
Then $\varphi_i$ grows asymptotically at rate $\widehat{\lambda}_i$.
% Define the directional observable
% \[
% \varphi_i(x_k)=(q_i(x_k)^\top x_k)^2,
% \]
% which grows asymptotically at rate $\widehat{\lambda}_i$.

% \noindent {\it Algorithm}:
% \begin{enumerate}
% \item Collect trajectory $\{x_k\}$ and tangent maps $A_k$.
% \item Run QR iteration \eqref{eq:qr_incremental_comp}.
% \item Estimate $\widehat{\lambda}_i=2\widehat{\chi}_i$.
% \item Form $\varphi_i(x_k,v_k)=(q_i^\top v_k)^2$ and regress/interpolate.
% \end{enumerate}

%=============================================================================
\subsection{Gramian: tangent-space supereigenfunctions}
\label{subsec:gramian_comp}
%=============================================================================

Directional supereigenfunctions on the tangent bundle can be constructed 
directly from trajectory sensitivities. Given $Q_i:\Omega\to\mathbb S_+^n$ and $\lambda_i>0$, define
\begin{equation}
\varphi_i(x_k,v)
\approx
\Delta t\sum_{j=0}^{H}
e^{-2\lambda_i j\Delta t}
\|Q_i^{1/2}(x_{k+j})\,Y_{k,j}v\|^2,
\label{eq:gramian_sum_comp}
\end{equation}
where $Y_{k,j}\approx Ds_{j\Delta t}(x_k)$ is obtained from variational simulation. This approximates the infinite-horizon Gramian (\ref{eq:gramian_phi_family}),
% \[
% \varphi_i(x,v)
% =
% \int_0^\infty e^{-2\lambda_i t}
% \|Q_i^{1/2}(s_t(x))Y(t,x)v\|^2 dt,
% \]
which satisfies
\[
\mathcal K_{\mathrm{tan}}\varphi_i \le 2\lambda_i \varphi_i.
\]

% \noindent {\it Algorithm}:\begin{enumerate}
% \item Simulate trajectories and tangent dynamics to obtain $Y_{k,j}$.
% \item Choose $Q_i$ to emphasize desired directions.
% \item Compute $\varphi_i(x_k,v)$ via \eqref{eq:gramian_sum_comp}.
% \item Optionally regress $M_i(x)$ such that $\varphi_i(x,v)=v^\top M_i(x)v$.
% \end{enumerate}

\begin{remark}[Comparison of approaches]
MET/QR yields intrinsic directions and rates; the resolvent method constructs 
state-space certificates directly from observable data; and the Gramian method 
provides constructive tangent-space supereigenfunctions. Together, these form a 
unified data-driven framework for directional analysis and control synthesis.
\end{remark}

%=============================================================================
\subsection{Resolvent: state-space supereigenfunctions}
\label{subsec:resolvent_comp}
%=============================================================================

Given nonnegative probes $g_i:\Omega\to\mathbb R_{\ge 0}$ and $\lambda_i>0$, define
\begin{equation}
\varphi_i(x)
=
\int_0^\infty e^{-\lambda_i t}g_i(s_t(x))dt,
\label{eq:resolvent_def_comp}
\end{equation}
which satisfies $
\mathcal K_f\varphi_i=\lambda_i\varphi_i-g_i \le \lambda_i\varphi_i.
$
With sampled rollouts $x_{k+j}\approx s_{j\Delta t}(x_k)$,
\begin{equation}
\varphi_i(x_k)
\approx
\Delta t\sum_{j=0}^{H} e^{-\lambda_i j\Delta t}g_i(x_{k+j}).
\label{eq:resolvent_sum_comp}
\end{equation}

The rates are estimated as 
\[
\rho_i^\star
=
\max_k \frac{\varphi_i(x_{k+1})}{\varphi_i(x_k)+\eta},
\qquad
\lambda_i^\star = \frac{1}{\Delta t}\log \rho_i^\star.
\]

% \noindent {\it Algorithm}:\begin{enumerate}
% \item Choose probes $g_i$ aligned with the task.
% \item Compute $\varphi_i(x_k)$ via \eqref{eq:resolvent_sum_comp}.
% \item Estimate $\lambda_i$ if needed.
% \item Fit $\varphi_i(\cdot)$ over $\Omega$.
% \end{enumerate}

\section{Simulation Results}

We illustrate the proposed framework on four examples. The Duffing oscillator is used to demonstrate directional supereigenfunctions, MET-based certificates, and uncertainty propagation. The inverted pendulum on a cart illustrates rollout-based stabilizing control. A leader–follower Duffing system demonstrates contraction-based synchronization. A four-dimensional Duffing system with obstacles highlights the unified treatment of stabilization and safety.
\subsection{Duffing oscillator}
We first consider the damped Duffing oscillator in the double-well regime
\begin{equation}
\dot x = y,\qquad
\dot y = -\delta y - \alpha x - \beta x^3,
\label{eq:duffing_ct}
\end{equation}
with $(x,y)\in\mathbb R^2$. We use
$
\alpha=-1,\; \beta=1,\; \delta=0.2,
$
sampling time $\Delta t=0.01$, horizon $T=50$ and a $121\times 121$ grid over
% $[-2,2]\times[-2,2]$.

\subsubsection{Resolvent-based supereigenfunctions}

We choose the probe functions
\[
g_1(x,y)=x^2,\qquad g_2(x,y)=y^2,
\qquad \lambda_1=\lambda_2=0,
\]
so the resolvent reduces to a finite-horizon accumulation
\begin{equation}
\varphi_i(z_0)\approx \Delta t\sum_{j=0}^{H} g_i(z_j),\qquad i=1,2,
\label{eq:duffing_resolvent_sum_settings_short}
\end{equation}
where $z_{j+1}$ is obtained by RK4 integration. The resulting certificates capture distinct aspects of the dynamics. The function $\varphi_2$ is closely aligned with the mechanical energy
$
E=\tfrac12 y^2+\tfrac{\alpha}{2}x^2+\tfrac{\beta}{4}x^4,
$
consistent with the identity $\dot E=-y^2$. The function $\varphi_1$ highlights the weakly dissipative direction associated with the phase-space geometry.

\subsubsection{MET-based computation}

We also compute directional supereigenfunctions using the MET. The variational dynamics
\[
\dot Y = J(x(t))Y,\qquad Y(0)=I,
\]
are integrated with QR re-orthonormalization at every step using $qStep=1$ and $\Delta t_{\rm QR}=0.01$. After discarding an initial transient of length $10$, Lyapunov exponents are estimated as
\[
\widehat{\chi}_i =
\frac{1}{\mathrm{countR}\,\Delta t_{\mathrm{QR}}}
\sum_{k=1}^{\mathrm{countR}} \log |R_{ii}(k)|,
\qquad
\widehat{\lambda}_i=2\widehat{\chi}_i .
\]

Numerically, we obtain $\widehat{\lambda}_i\approx -0.3$, consistent with the local linearization about the attractors $(\pm1,0)$, for which the corresponding Lyapunov exponents are approximately $-0.15$. This agreement supports the MET-based directional construction.

\subsubsection{Uncertainty propagation}

To illustrate set-valued prediction, we consider the uncertain initial set
\[
\mathcal X_0=\{z:\ z\in z_c+\mathcal B\},\qquad z_c=(0,0),
\]
where $\mathcal B$ is a square of side length $0.2$. Monte--Carlo samples from $\mathcal X_0$ are propagated under \eqref{eq:duffing_ct}. Using the certificates \eqref{eq:duffing_resolvent_sum_settings_short}, we assign the envelope rates
\begin{equation}
{\cal K}_f \varphi_1 \le \lambda_1 \varphi_1,\qquad
{\cal K}_f \varphi_2 \le \lambda_2 \varphi_2,
\qquad
\lambda_1=0.1,\ \lambda_2=-0.1.
\label{eq:duffing_two_rates_short}
\end{equation}
This yields expansion in the $x$-direction and contraction in the $y$-direction. Define
\[
\bar\varphi_i(0)=\sup_{z\in\mathcal X_0}\varphi_i(z),
\qquad
\bar\varphi_i(t)=e^{\lambda_i t}\bar\varphi_i(0).
\]
Then,
$
\varphi_i(z(t))\le \bar\varphi_i(t), i=1,2,
$
so the reachable envelope is
\[
\mathcal R_{\varphi}(t;\mathcal X_0)
=
\{z:\ \varphi_1(z)\le \bar\varphi_1(t),\ \varphi_2(z)\le \bar\varphi_2(t)\}.
\]
The simulated trajectories remain inside these envelopes, illustrating certified uncertainty propagation using only two supereigenfunction coordinates. We select two supereigenfunctions with discount rates 
$\pm 0.1$, reflecting the local expansion and contraction behavior near the saddle equilibrium around which the initial uncertainty set is defined. More generally, the choice of the number of supereigenfunctions, their associated discount rates, and the probing functions depends critically on factors such as the state-space dimension and the relative rates of expansion and contraction of trajectories.

\begin{figure}
\centering
\includegraphics[width=1.0\linewidth]{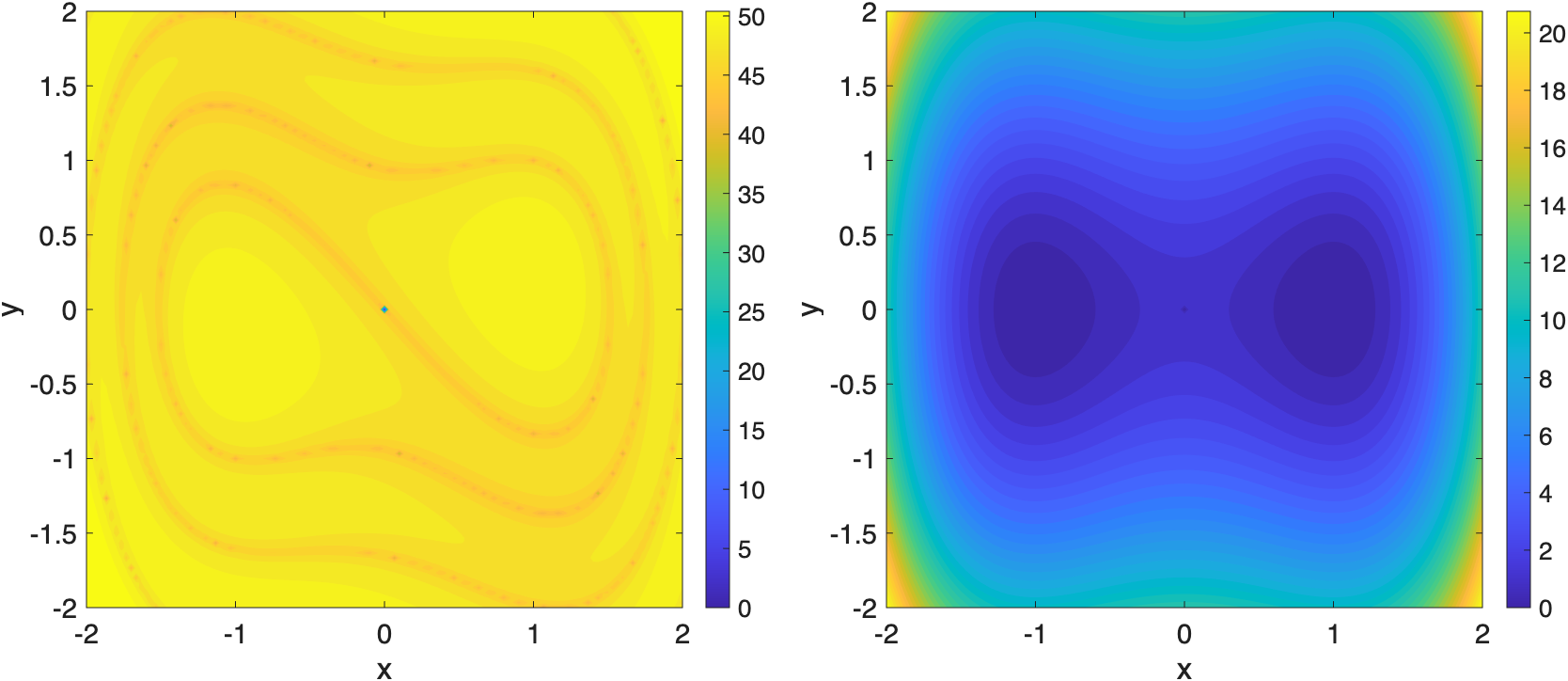}
\caption{Resolvent-based supereigenfunctions for the Duffing oscillator with probes $g_1=x^2$ (left) and $g_2=y^2$ (right). The two certificates capture distinct geometric features of the flow.}
\label{fig:duffing_resolvent_zero}
\end{figure}

\begin{figure}
\centering
\includegraphics[width=1.0\linewidth]{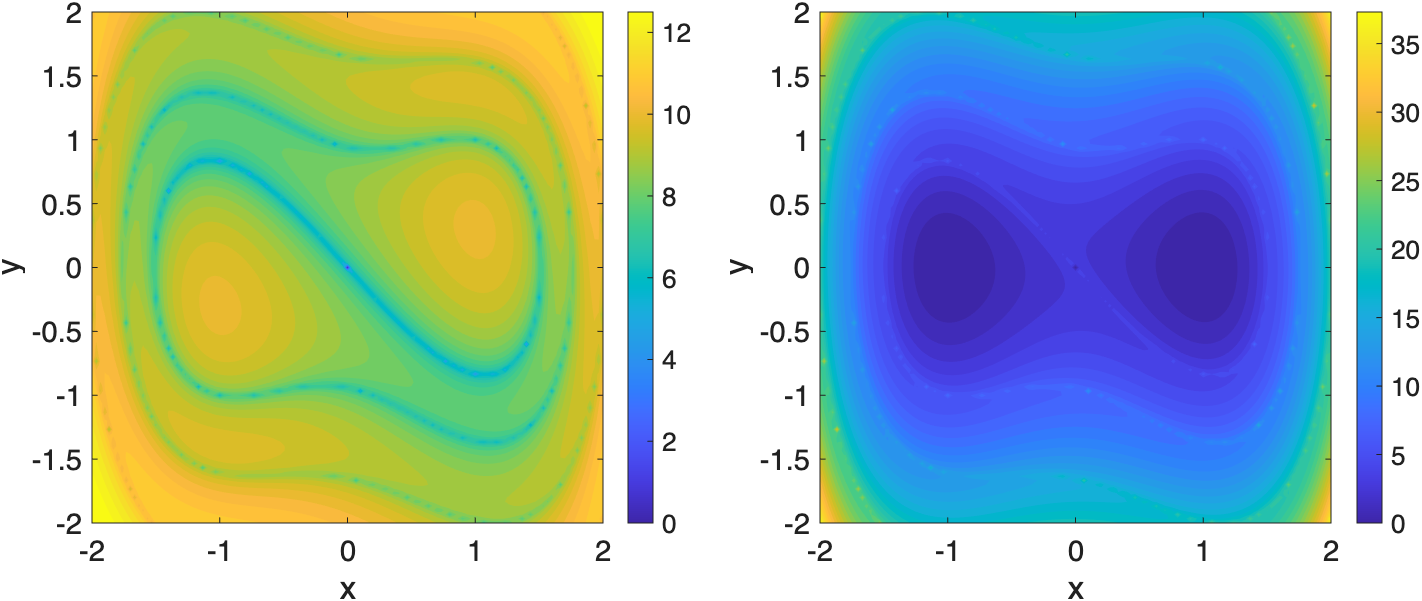}
\caption{Supereigenfunctions for the Duffing oscillator with nonzero envelope rates $\lambda_1=0.1$ and $\lambda_2=-0.1$, illustrating expanding and contracting directions.}
\label{fig:duffing_resolvent_shifted}
\end{figure}

\begin{figure}
\centering
\includegraphics[width=1.0\linewidth]{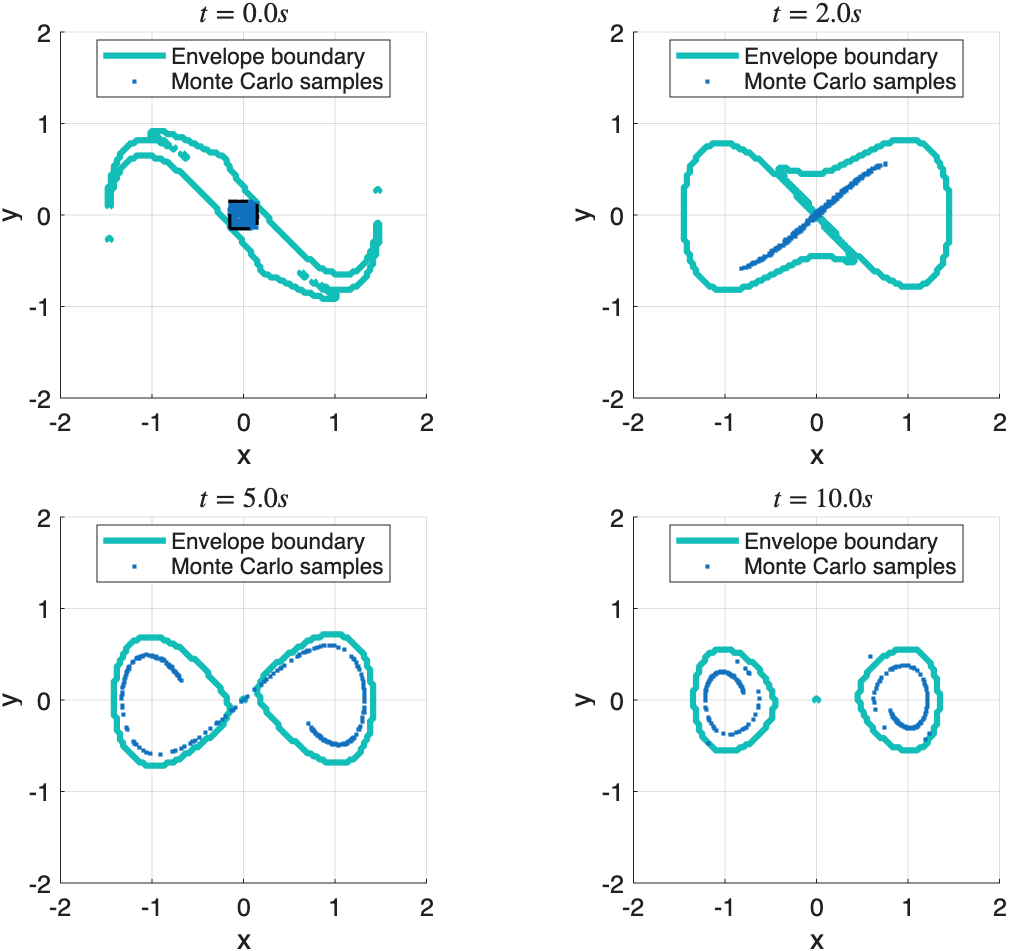}
\caption{Uncertainty propagation for the Duffing oscillator. Monte--Carlo trajectories remain within the supereigenfunction-based reachable envelope.}
\label{fig:duffing_uncertainty}
\end{figure}
\begin{figure}
\centering
\includegraphics[width=1.0\linewidth]{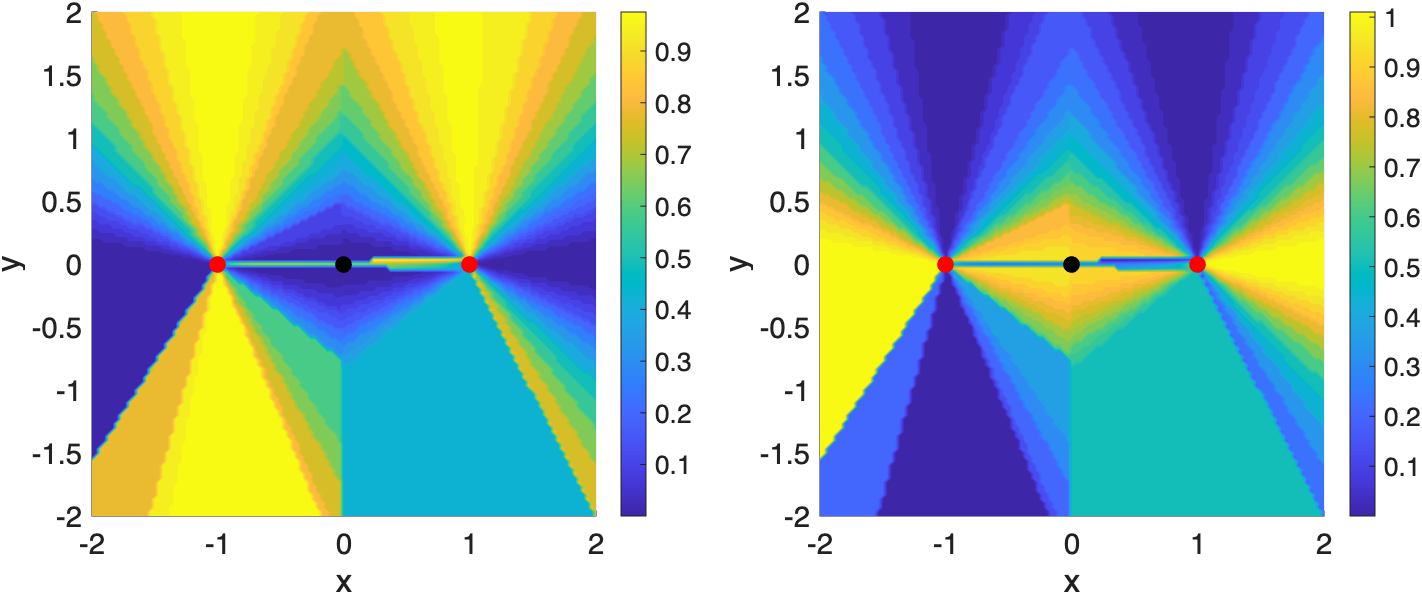}
\caption{Directional supereigenfunctions computed from the MET/QR procedure. The estimated rates are approximately $\lambda_1=-0.30$ and $\lambda_2=-0.29$.}
\label{fig:duffing_met}
\end{figure}
\subsection{Stabilization of the inverted pendulum on a cart}
\label{subsec:online_resolvent_control}

We next consider stabilization of the inverted pendulum on a cart. This example highlights the local, online nature of the resolvent-based construction: at each state, the controller requires only the value and gradient of the supereigenfunction, which are computed numerically from short rollouts of the uncontrolled system. The approach does not need the global information about the supereigenfunction and hence the approach scale gracefully with the state space dimension. The state is
$
x=\begin{bmatrix}p & \dot p & \theta & \dot\theta\end{bmatrix}^\top,
$
where \(p\) is the cart position and \(\theta\) is measured from the upright configuration. The upright equilibrium is
$
x_{\mathrm{eq}}=\begin{bmatrix}0 & 0 & 0 & 0\end{bmatrix}^\top .
$
The dynamics are
\begin{align}
\dot x_1 &= x_2,\qquad \dot x_3 = x_4, \nonumber\\
\dot x_2
&=
\frac{
u - b_x x_2 + m s \big(l x_4^2 + g c\big)
}{
{\cal D}(x)
}, \label{eq:invertedpendulum}
\\
\dot x_4
&=
\frac{
-u c + b_x x_2 c - m l x_4^2 c s - (M+m) g s - b_\theta x_4\,{\cal D}(x)
}{
l\,{\cal D}(x)
},\nonumber
\end{align}
where $s=\sin x_3$, $c=\cos x_3$, and ${\cal D}(x)=M+ms^2$. The parameters are
$
M=1.0,\;m=0.2,\; l=0.5,\;g=9.81.$
The supereigenfunction is approximated by the discrete resolvent
\begin{equation}
\varphi_\lambda(x)
\approx
\Delta t
\sum_{j=0}^{H}
e^{-\lambda j\Delta t}\,
g_p(x_j),
\label{eq:cartpole_resolvent_discrete}
\end{equation}
using uncontrolled rollouts with
$
\Delta t=0.01,\;\; H=600,\;\;\lambda=1.
$
The probe is chosen as
\begin{equation}
g_p(x)
=
b_x \dot p^{\,2} + b_\theta \dot\theta^{\,2}
+\mu \left(
\frac{k_x}{2}p^2 + k_\theta(1-\cos\theta)\right),
\label{eq:cartpole_probe}
\end{equation}
with
$
b_x=0.10,\; b_\theta=0.05,\;
k_x=1.0,\; k_\theta=5.0,\; \mu=2.0.
$
This probe combines dissipative and potential terms and is aligned with the system’s mechanical structure.
The gradient $\nabla\varphi_\lambda(x)$ is computed by centered finite differences. We first apply the QP controller in \eqref{eq:stabilizing_qp} with $
\beta=-0.5,\; u_{\min}=-10,\; u_{\max}=10.
$
From the initial condition $
x_0=(0,0,0.95\pi,0)^\top,
$
the controller stabilizes the upright equilibrium, as shown in Fig.~\ref{fig:cartpole_qp}. We also test the gradient-based controller
\begin{equation}
u
=
{\rm sat}_{[-10,10]}
\left(
-\frac12 {\cal K}_g  \varphi_\lambda(x)
\right),
\label{gradcontroller}
\end{equation}
where $g(x)$ denotes the control vector field. This controller also stabilizes the system from the same initial condition. Notably, the certificate is computed from rollout data, and only the control direction is used in feedback. A comparison of the evolution of the supereigenfunction along the system trajectory shows that it decays monotonically under the QP-based controller, whereas this monotonic decay is not observed for the gradient-based controller. This behavior is expected, as the QP formulation explicitly enforces a contracting inequality that guarantees asymptotic decay of the supereigenfunction along the closed-loop trajectory. The proposed approach for stabilizing controller design is scalable for system with large dimensional state space as the supereigenfunction are not computed globally but point-wise along the closed loop trajectory with rollouts. 

\begin{figure}
\centering
\includegraphics[width=1.0\linewidth]{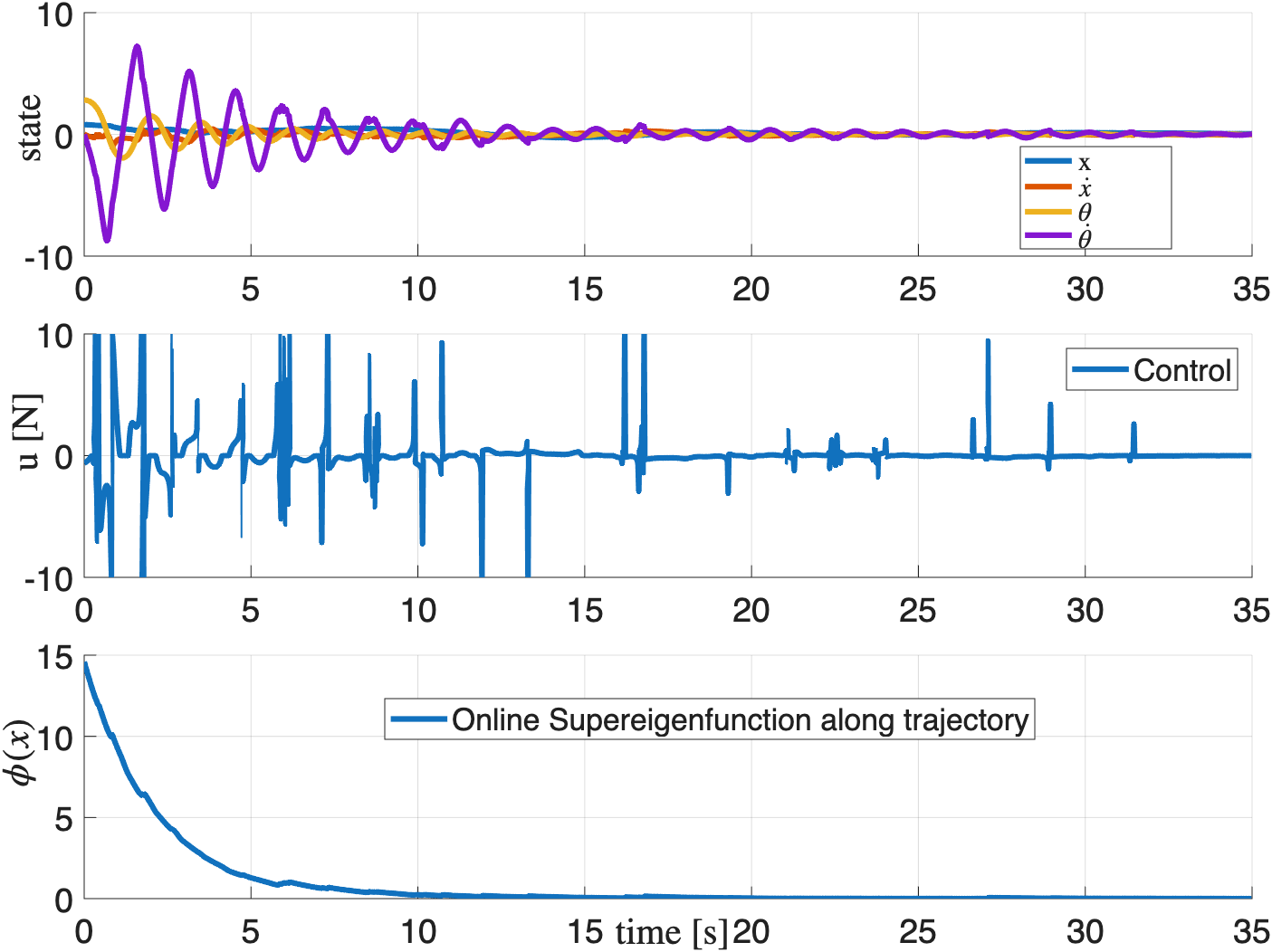}
\caption{Supereigenfunction-based stabilization of the cart-pole using the QP controller \eqref{eq:stabilizing_qp}. Top: state trajectories. Middle: control input. Bottom: decay of the certificate along the closed-loop trajectory.}
\label{fig:cartpole_qp}
\end{figure}

\begin{figure}
\centering
\includegraphics[width=1.0\linewidth]{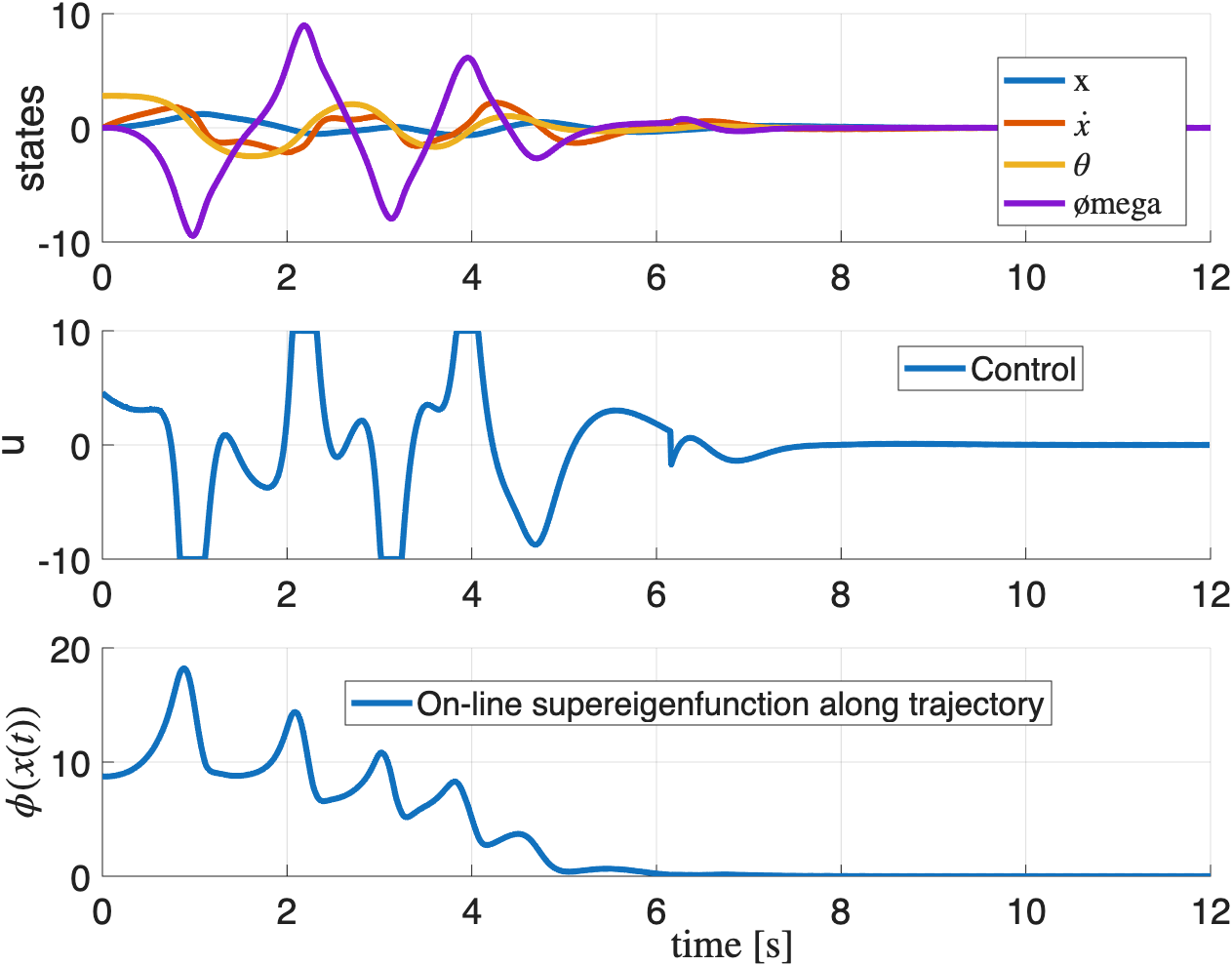}
\caption{Supereigenfunction-based stabilization of the cart-pole using the gradient controller \eqref{gradcontroller}. }
\label{fig:cartpole_grad}
\end{figure}

\subsection{4D Duffing system with obstacle avoidance}

Finally, we consider a four-dimensional system formed by two coupled Duffing oscillators:
\begin{align}
\dot{x}_1 &= y_1, \qquad
\dot{y}_1 = -\delta y_1 - \alpha (x_1-x_2) - \beta x_1^3 + u_1, \nonumber\\
\dot{x}_2 &= y_2, \qquad
\dot{y}_2 = -\delta y_2 - \alpha (x_2-x_1) - \beta x_2^3 + u_2,
\label{eq:4d_duffing}
\end{align}
with state \(z=[x_1,y_1,x_2,y_2]^\top\), control \(u=[u_1,u_2]^\top\), and parameters $
\alpha=-1,\; \beta=1,\; \delta=0.3,\; u\in[-3,3]^2.
$

\subsubsection{Nominal stabilization}

A nominal stabilizing controller is constructed from the resolvent-based certificate
\[
\phi_{\mathrm{stab}}(z)
=
\int_0^{T_1} e^{-\lambda_{\rm stab} t} g(z(t))\,dt,
\;\;
g(z)=x_1^2+y_1^2+x_2^2+y_2^2,
\]
with
$
\lambda_{\rm stab}=1.0,\; \Delta t=0.05,\; H=120,\; T_1=H\Delta t$.
The gradient of \(\phi_{\mathrm{stab}}\) is approximated by finite differences, and the nominal controller is
\[
u_{\mathrm{nom}}
=
-{\rm sat}_{[-3,3]^2}\big({\cal K}_g \phi_{\mathrm{stab}}\big).
\]

\subsubsection{Safety filtering}

Two unsafe regions are defined as spheres in ${\mathbb R}^4$:
\[
\mathcal O_i=\{z:\ \|z-c_i\|^2\le r_i^2\},\qquad i=1,2,
\]
with
$
c_1 = (1.0,0.6,-0.6,0.5)^\top,\; r_1 = 1.32,
\;
c_2 = (-1.2,-0.7,0.9,-0.4)^\top,\;r_2 = 1.5.
$
Define the margins
\[
m_i(z)=\|z-c_i\|^2-r_i^2,
\]
so that \(m_i(z)>0\) corresponds to safe operation. For each obstacle we construct a safety certificate using the probe
\[
g_i(z)=\frac{10}{1+\exp(\kappa m_i(z))},
\qquad \kappa=25,
\]
and
\[
\phi_{\mathrm{safe},i}(z)
=
\int_0^{T_2} e^{-\lambda_{\rm safe} t}\,g_i(z(t))\,dt,
\]
with $
\lambda_{\rm safe}=1.0,\; \Delta t=0.05,\; H=180,\; T_2=H\Delta t.$ At each step, the control input is obtained from the QP
\begin{align}
\min_{u\in\mathbb R^2}\quad &
\frac12 (u-u_{\mathrm{nom}})^\top R (u-u_{\mathrm{nom}}) \nonumber\\
\text{s.t.}\quad &
L_f \phi_{\mathrm{safe},i}(z)+L_g\phi_{\mathrm{safe},i}(z)\,u
\le
-\alpha_i \phi_{\mathrm{safe},i}(z),\nonumber\\
&u\in[-3,3]^2,\;\;i=1,2,
\label{eq:duffing_safety_qp}
\end{align}
with
$
\alpha_1=\alpha_2=0.1,\; R=0.5I.
$
The system is simulated for \(T=10\) seconds from $
z_0=[2.5,0.8,-2.0,0.6]^\top$.
The nominal controller drives the state toward the origin, while the safety filter modifies the control only when the trajectory approaches an obstacle. Safety is verified by
\[
\min_{t\in[0,T]} m_i(z(t))>0,\qquad i=1,2.
\]
For visualization, the trajectory is projected onto the \((x_1,x_2)\) and \((y_1,y_2)\) planes. This example illustrates simultaneous stabilization and safety enforcement in a higher-dimensional nonlinear system. It is important to emphasize that the minimum distances from the origin to obstacles 
$1$ and 
$2$ are approximately 
$0.08$ and 
$0.20$, respectively. Despite these tight safety margins, the proposed control approach successfully steers the system to the origin while maintaining safety.

\begin{figure}
\centering
\includegraphics[width=1.0\linewidth]{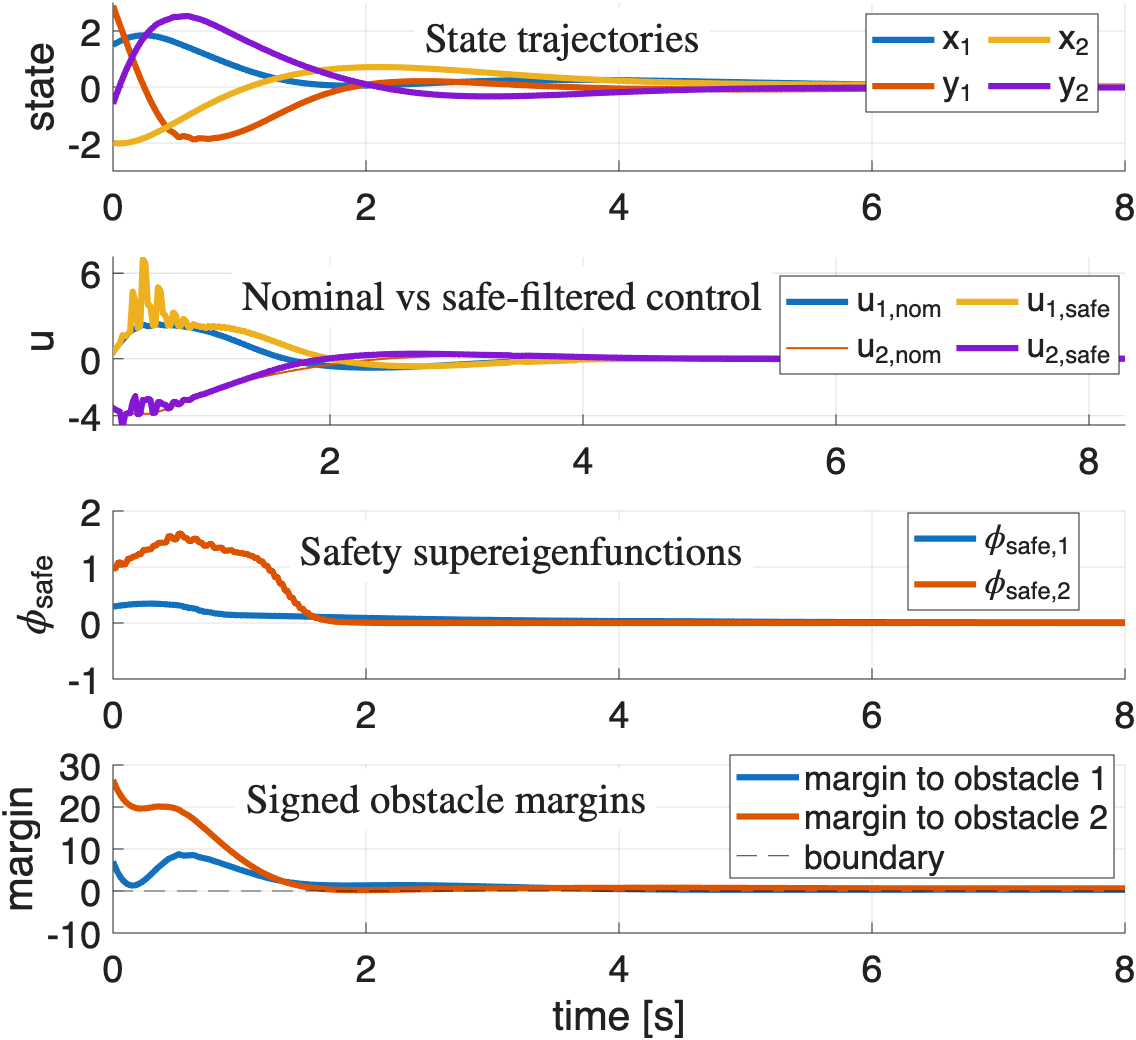}
\caption{Closed-loop trajectory of the 4D Duffing system under the nominal stabilizing controller and the safety filter. The safety constraints prevent entry into the obstacle regions while preserving convergence toward the origin.}
\label{fig:duffing4d_safe}
\end{figure}

\begin{figure}
\centering
\includegraphics[width=1.0\linewidth]{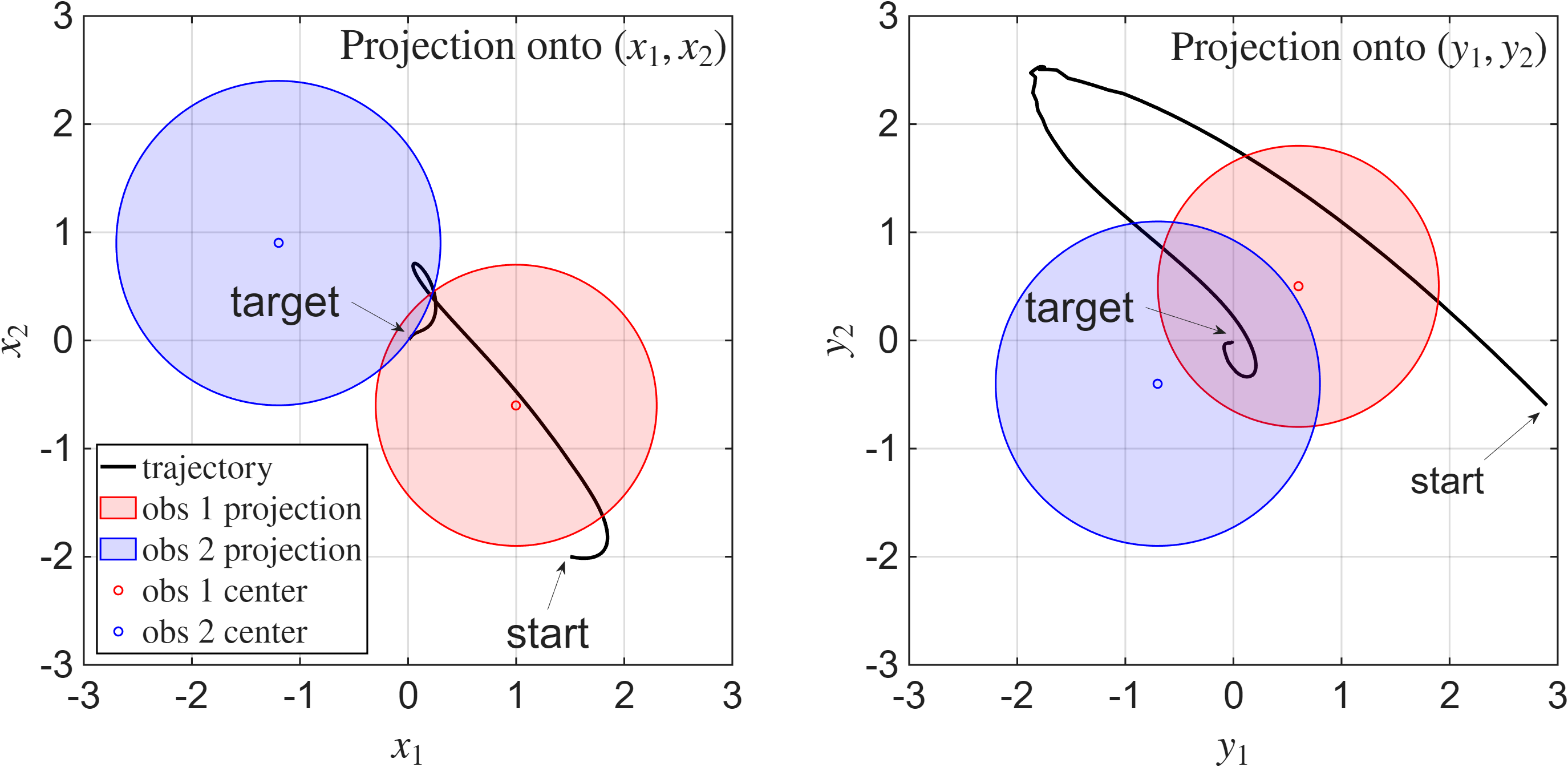}
\caption{Projection of the 4D closed-loop trajectory onto selected coordinate planes. The projected obstacle regions are avoided throughout the motion.}
\label{fig:duffing4d_projection}
\end{figure}

\subsection{Synchronization of a Forced Duffing Oscillator}

We consider a leader--follower pair of Duffing oscillators. The leader is driven by a sinusoidal input,
\begin{align}
\dot x_L &= y_L, \nonumber\\
\dot y_L &= -\delta y_L-\alpha x_L-\beta x_L^3 + A_L\sin(\omega_L t),
\end{align}
while the follower evolves as
\begin{align}
\dot x_F &= y_F, \nonumber\\
\dot y_F &= -\delta y_F-\alpha x_F-\beta x_F^3 + u.
\end{align}
The parameters are $
\alpha=-1,\; \beta=1,\; \delta=0.2,\quad
A_L=0.3,\; \omega_L=1, $
with simulation horizon \(T=120\) s and step size \(\Delta t=0.01\). The initial conditions are chosen to be
$
z_L(0)=\begin{bmatrix}0.1 & 0.2\end{bmatrix}^\top,
\;
z_F(0)=\begin{bmatrix}-0.1 & -0.3\end{bmatrix}^\top 
$
and corresponds to initializing the system in two different potential wells of the oscillator. 
Let \(e=z_F-z_L\) denote the synchronization error. A finite-horizon Gramian metric is computed along the forced leader trajectory,
\[
M_{T_g}(x)=\int_0^{T_g} e^{-2\lambda \tau}Y(t,x)^\top QY(t,x)\,dt,
\]
where $Y(t,x)=Ds^L_{t}(x)$ is the Jacobian evaluated along the trajectory of the leader system. The 
parameters are
$
\Delta t_g=0.02,\; H_g=500,\; \lambda=5,\;Q=I_2.
$
The corresponding tangent-space certificate is
$
V_\delta(t)=e(t)^\top M_{T_g}(x)(t)e(t).
$ The follower control is synthesized using a scalar quadratic program that enforces contraction of the certificate:
\begin{align}
\min_u \quad & \tfrac12 (u-u_{\rm nom})^2 \\
\text{s.t.} \quad &
2e^\top M_{T_g}(x)\big(f_F(z_F)+Bu-f_L(z_L,t)\big)\nonumber\\
&\le -\gamma\,e^\top M_{T_g}(x) e,\nonumber
\end{align}
where \(B=[0\ \ 1]^\top\), \(u_{\rm nom}=0\), \(\gamma=2\), and \(u\in[-30,30]\). This formulation yields a minimum-intervention control law that enforces decay of the tangent-space certificate. Simulation results show that the follower successfully synchronizes with the oscillatory leader, while the certificate \(V_\delta(t)\) decreases along the trajectory, demonstrating contraction-based synchronization using Gramian supereigenfunction certificates.

\begin{figure}
\centering
\includegraphics[width=1.0\linewidth]{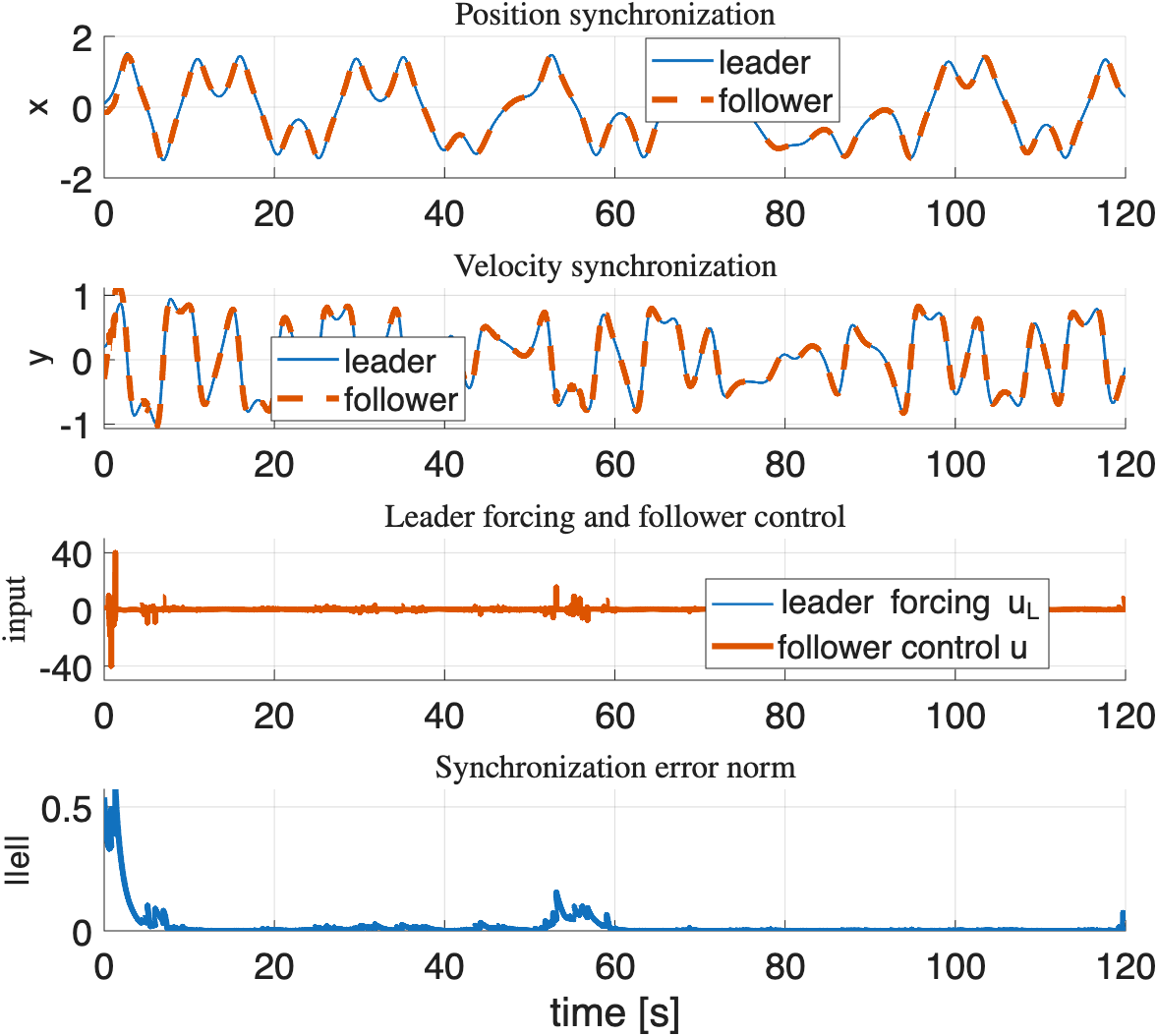}
\caption{Synchronization in leader-follower oscillators.}
\label{fig:synchronization_state}
\end{figure}

\begin{figure}
\centering
\includegraphics[width=1.0\linewidth]{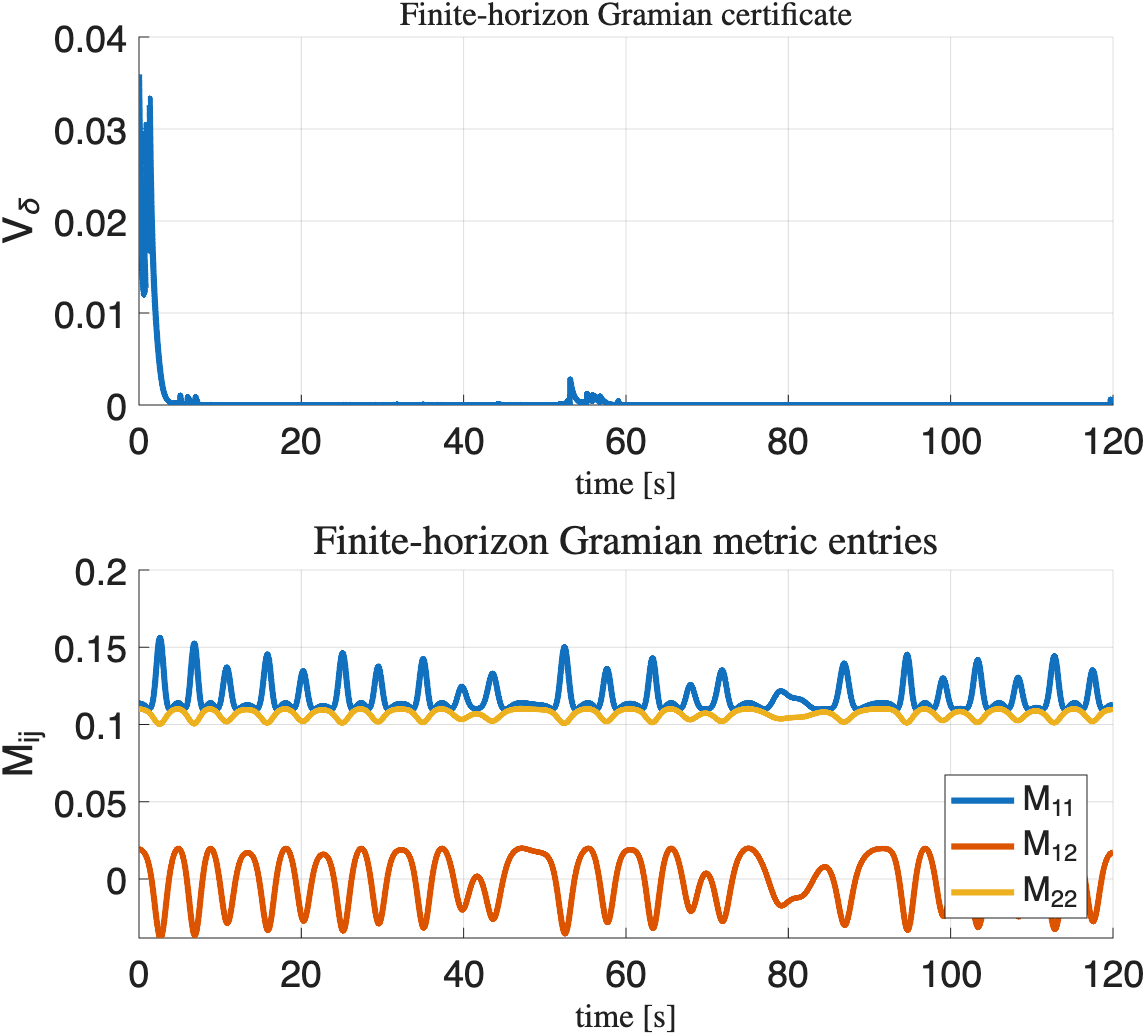}
\caption{Evolution of supereigenfunction as contraction certificate.}
\label{fig:synchronization_certificate}
\end{figure}

% These four examples illustrate the main roles of the proposed framework: inequality-based modeling and uncertainty propagation in the Duffing oscillator, rollout-based stabilizing control for the cart-pole, contraction-based synchronization in a leader–follower Duffing system, and integrated stabilization and safety filtering in a higher-dimensional nonlinear system.

\section{Conclusion and Future Direction}
This paper introduced \emph{supereigenfunctions} as an inequality-based generalization of Koopman eigenfunctions, providing an operator-theoretic framework for analyzing and controlling nonlinear systems through bounds and certificates. The key contribution is a shift in perspective: Koopman methods are used not to construct predictive models, but to construct \emph{certificates} such as Lyapunov functions, Hamilton--Jacobi value functions, and safety constraints.
We developed directional supereigenfunctions for contraction and incremental stability, and resolvent/Gramian constructions that enable computation directly from trajectory data. These results establish a bridge between Koopman operator theory and inequality-based control methods, while providing a scalable, data-driven alternative to PDE and SOS-based approaches.
% Future work includes deeper connections with Hamilton--Jacobi theory and stochastic systems, as well as further development of data-driven algorithms and integration with learning-based control frameworks.
% Overall, this work lays the foundation for an inequality-based Koopman framework for certificate construction and data-driven control.

% \bibliographystyle{ieeetr}
% \bibliography{references}
\bibliographystyle{IEEEtran}
\bibliography{references2_arxiv}
 \begin{IEEEbiography}
[{\includegraphics[width=1in,height=1.25in,clip,keepaspectratio]{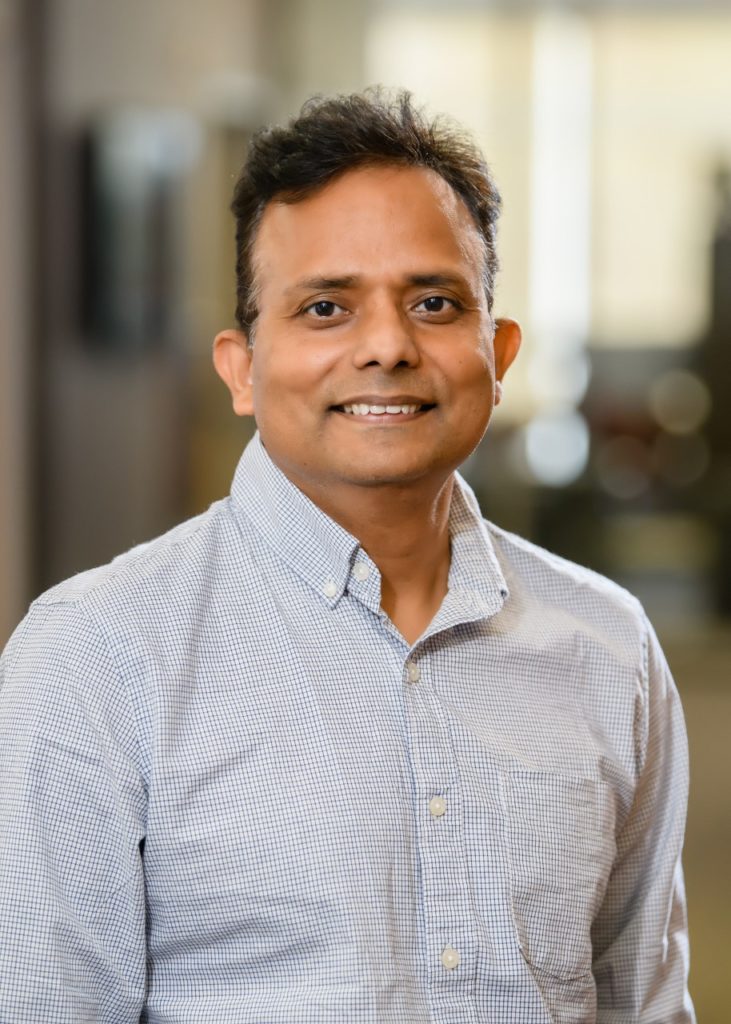}}]%
{Umesh Vaidya}(M’07, SM'19)  received the Ph.D. degree in mechanical engineering from the University of California at Santa Barbara, Santa Barbara, CA, in
2004. He was a Research Engineer at the United Technologies Research Center (UTRC), East Hartford, CT, USA. He is currently a professor in the Department of Mechanical Engineering, Clemson University, S.C., USA. Before joining Clemson University in 2019, and since 2006, he was a faculty with the department of Electrical and Computer Engineering at Iowa State University. He is the recipient of 2012 National Science Foundation CAREER award. His current research interests include dynamical systems and control theory with applications to power grid and robotics. 
\end{IEEEbiography}
\end{document}